\theoremstyle{plain}
\newtheorem{theorem}{Theorem}[section]
\newtheorem{lemma}[theorem]{Lemma}
\newtheorem{definition-theorem}[theorem]{Definition-Theorem}
\newtheorem{proposition}[theorem]{Proposition}
\newtheorem{corollary}[theorem]{Corollary}
\newtheorem{conjecture}[theorem]{Conjecture}
\theoremstyle{definition}
\newtheorem{definition}[theorem]{Definition}
\newtheorem{example}[theorem]{Example}
\newtheorem{remark}[theorem]{Remark}
\newtheorem{notation}[theorem]{Notation}
\newcommand \bth[1] { \begin{theorem}\label{t#1} }
\newcommand \ble[1] { \begin{lemma}\label{l#1} }
\newcommand \bpr[1] { \begin{proposition}\label{p#1} }
\newcommand \bco[1] { \begin{corollary}\label{c#1} }
\newcommand \bde[1] { \begin{definition}\label{d#1}\rm }
\newcommand \bex[1] { \begin{example}\label{e#1}\rm }
\newcommand \bre[1] { \begin{remark}\label{r#1}\rm }
\newcommand \bnota[1] {\begin{notation}\label{n#1}\rm }
\newcommand {\ele} { \end{lemma} }
\newcommand {\epr} { \end{proposition} }
\newcommand {\eco} { \end{corollary} }
\newcommand {\ede} { \end{definition} }
\newcommand {\eex} { \end{example} }
\newcommand {\ere} { \end{remark} }
\newcommand {\enota} { \end{notation} }
\DeclareMathOperator \Hom { {\mathrm{Hom}} }
\DeclareMathOperator \ind{ {\mathrm{ind}}}
\DeclareMathOperator \Ind { {\mathrm{Ind}} }
\DeclareMathOperator \sgn { { \mathrm{sgn}}}
\begin{document}
%\tableofcontents
\setlength{\baselineskip}{1.2\baselineskip}
%%%%%%%%%%%%%%%%%%%%%%%%%%%%%%%%%%%%%%%%%%%%%%%%%%%%%%%%%%%%%%%%%%%%%%%%%%%
%%%%%%%%%%%%%%%%%%%%%%    Title    %%%%%%%%%%%%%%%%%%%%%%%%%%%%%%%%%%%%%%%%
\title[Bernstein-Zelevinsky derivatives and filtrations]
{Bernstein-Zelevinsky derivatives, branching rules and Hecke algebras}

\author[Kei Yuen Chan]{Kei Yuen Chan}
\address{ Korteweg-de Vries Institute for Mathematics, Universiteit van Amsterdam}
\email{K.Y.Chan@uva.nl}

\author[Gordan Savin]{Gordan Savin}
\address{
Department of Mathematics \\
University of Utah}
\email{savin@math.utah.edu}

\begin{abstract}
Let $G$ be a split reductive group over a $p$-adic field $F$.  Let $B$ be a Borel subgroup and $U$ the maximal unipotent subgroup of $B$. 
 Let $\psi$ be a Whittaker character of $U$. Let $I$ be an Iwahori subgroup of $G$.  We describe the Iwahori-Hecke algebra action on the Gelfand-Graev representation 
$(\mathrm{ind}_{U}^{G}\psi)^I$ by an explicit projective module.  As a consequence, for $G=GL(n,F)$, we define and describe Bernstein-Zelevinsky derivatives of representations 
generated  by $I$-fixed vectors in terms of the corresponding Iwahori-Hecke algebra modules. 
Furthermore, using Lusztig's reductions, we show that the Bernstein-Zelevinsky derivatives can be determined using graded Hecke algebras. 

We give two applications of our study. Firstly, we compute the Bernstein-Zelevinsky derivatives of generalized Speh modules, which recovers a result of Lapid-M\'inguez and Tadi\'c. Secondly, we give a realization of the Iwahori-Hecke algebra action on some generic representations of $GL(n+1,F)$, restricted to $GL(n,F)$, which is further used to verify a conjecture on an Ext-branching problem of D. Prasad for a class of examples.

\end{abstract}
\maketitle 

\section{Introduction}

\subsection{} %Graded Hecke algebras were introduced by Lusztig \cite{Lu} for the study of representations of $p$-adic groups and affine Hecke algebras. Lusztig proved two reduction theorems for relating representation theory of the affine Hecke algebra and its graded version. One important application of the reduction theorems appears in the study of the unitary dual by Barbasch-Moy \cite{BM}. Reduction theorems and graded Hecke algebras also play a role in the classification of discrete series of affine Hecke algebras by Opdam-Solleveld \cite{OS}.

Bernstein-Zelevinsky derivatives were first introduced and studied in \cite{BZ} and \cite{Ze} and are important for the classification of simple representations of $GL(n,F)$. The derivatives have other applications in representation theory such as branching rules \cite{Pr} and study of $L$-functions. 

One goal of this paper is to formulate a functor for Hecke algebras that corresponds to the Bernstein-Zelevinsky derivative and show that the Bernstein-Zelevinsky derivatives can be determined from the corresponding functor. The functor thus provides a framework to understand some problems from the Hecke algebra approach. As an application of our study, we  compute the Bernstein-Zelevinsky derivatives of generalized Speh modules, which does not use the determinantal formula of Tadi\'c \cite{Ta} and Lapid-M\'inguez \cite{LM} or Kazhdan-Lusztig polynomials \cite{Ze2, CG}. 

Another consequence of our study attempts to understand the branching problem for the pair $(GL(n+1,F), GL(n,F))$. The $\mathrm{Hom}$-branching problem has been studied extensively, see for example \cite{Pr, GP, Pr3, GGP, AGRS}. The $\mathrm{Ext}$-branching problems were first initiated and studied by Dipendra Prasad \cite{Pr2}. Another result in this paper is to give a description of the localized Hecke algebra action on some generic representations of $GL(n+1,F)$, considered as representations of $GL(n,F)$, which is used to verify a conjecture of Prasad on Ext-multiplicity for some cases including all spherical generic representations of $GL(n,F)$. 
%Our formulation for such class of examples is in terms of a Hom-branching problem.

\subsection{Main results}  
Let $F$ be a $p$-adic field with the residual field of order $q$. 
Let $G$ be a split reductive reductive group over $F$.  Let $T$ be a maximal split torus in $G$ and let $W$ be the Weyl group. 
 We fix a Chevalley-Steinberg pinning of $G$, and emphasize that the data introduced here depends on the choice of the pinning.  Precise definitions are in Section \ref{s induced rep wc}. 
 Let $B=TU$ be a Borel subgroup with the maximal unipotent subgroup $U$ and $I$ be an Iwahori subgroup of $G$.
  The Iwahori-Hecke algebra $\mathcal H$  is the convolution algebra of $I$-bi-invariant compactly supported functions on $G$. It contains a finite subalgebra $\mathcal H_W$ 
   of functions supported on the hyperspecial maximal compact subgroup determined by the pinning.  
  As the notation indicates, $\mathcal H_W$ has a basis $T_w$ of characteristic functions of double cosets parameterized by the Weyl group. The algebra 
$\mathcal H_W$ has a one dimensional representation $\sgn$, $T_w\mapsto (-1)^{l(w)}$,  where $l$ is a length function on $W$. A prominent role in this paper is played by the element
   \[ 
   \mathbf S= \sum_W (-1/q)^{l(w)} T_w \in \mathcal H_W.  
   \] 
   If $\sigma$ is an $\mathcal H_W$-module, then $\mathbf S(\sigma)$ is the $\sgn$-isotypic subspace of $\sigma$.  We shall informally call $\mathbf S$ a sign projector.

Let $\psi$ be a Whittaker character of $U$.
Perhaps the most important result in this paper is a description of the space $\mathrm{ind}_U^G \psi$ in terms of Hecke algebra actions:

\begin{theorem} (Corollary \ref{cor realize iwahoir fix vector}) \label{thm intro realize}
As $\mathcal H$-modules, $(\mathrm{ind}_{U}^{G}\psi)^{I} $ is isomorphic to $\mathcal H \otimes_{\mathcal H_{W}} \mathrm{sgn}$.
\end{theorem}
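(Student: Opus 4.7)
The plan is to produce a distinguished Iwahori-fixed Whittaker vector $f_0 \in (\mathrm{ind}_U^G \psi)^I$ on which the finite Hecke subalgebra $\mathcal H_W$ acts through the sign character, and then to promote the resulting $\mathcal H$-module map $\Phi \colon \mathcal H \otimes_{\mathcal H_W} \sgn \to (\mathrm{ind}_U^G \psi)^I$, $1 \otimes 1 \mapsto f_0$, to an isomorphism by a basis/support comparison. The starting point is to parametrize $(\mathrm{ind}_U^G \psi)^I$ via double cosets: using the Iwahori factorization $I = I^- T^0 I^+$ with $I^+ = I \cap U$, and the Bruhat--Iwahori decomposition $G = \bigsqcup_{\til w \in \til W} I \til w I$, a cell $I \til w I$ contributes a one-dimensional subspace to $(\mathrm{ind}_U^G \psi)^I$ exactly when $\psi$ is trivial on $U \cap \til w I \til w^{-1}$, and zero otherwise. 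With the standard pinning/conductor conventions for $\psi$ (trivial on each simple positive root subgroup at the maximal ideal, and trivial on non-simple positive root subgroups altogether), a root-by-root check identifies the indexing set with $w_0 \cdot \{\tau \in \til W : \tau \text{ is a minimal length representative of } \til W / W\}$, where $w_0 \in W$ is the longest element.

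The distinguished function is then $f_0$, supported on the $U$-translate of $I w_0 I$ and normalized by $f_0(w_0) = 1$; this is well-defined because $U \cap w_0 I w_0^{-1} = w_0 (U^- \cap I) w_0^{-1}$ is conjugated into a product of positive root subgroups at positive valuation, on which $\psi$ vanishes. The sign relation $T_s \cdot f_0 = -f_0$ for each simple reflection $s \in W$ is the heart of the argument. Computing $T_s \cdot f_0$ by convolution spreads the support into $U w_0 I \cup U (s w_0) I$; since $l(s w_0) < l(w_0)$, the cell $U s w_0 I$ fails the $\psi$-triviality condition (a simple positive root subgroup at $\calO$-valuation now appears in $U \cap s w_0 I (s w_0)^{-1}$), so no Iwahori-fixed Whittaker function is supported there, forcing that piece of $T_s \cdot f_0$ to vanish. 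The remaining contribution is supported back on $U w_0 I$, and a bookkeeping of the cosets $I s I / I$ against $\psi$ and the Hecke normalizations pins the scalar down to $-1$. This yields the well-defined map $\Phi$.

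To show $\Phi$ is an isomorphism, I would exploit that both sides have the same dimension $|\til W / W|$ as vector spaces: on the left, $\{T_\tau \otimes 1\}_\tau$ with $\tau$ ranging over minimal length representatives of $\til W / W$ gives a basis, and on the right, the family $\{f_{\tau w_0}\}_\tau$ from the first step does. A Bruhat-order argument shows that $T_\tau \cdot f_0$ equals $f_{\tau w_0}$ plus a linear combination of $f_{\tau' w_0}$ with $\tau' < \tau$, giving an upper-triangular transition matrix with invertible diagonal, hence bijectivity of $\Phi$. The main obstacle will be the sign-relation calculation $T_s \cdot f_0 = -f_0$: it demands a careful accounting of how the $\psi$-twists combine with the Haar measure on $I s I$ under convolution against a function supported on the big Bruhat cell $U w_0 I$, and confirmation that the leftover scalar is exactly $-1$ rather than some other $q$-power. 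A secondary subtlety is verifying the unitriangularity in the last step, which reduces to the length-additivity $l(\tau w_0) = l(\tau) + l(w_0)$ for minimal representatives $\tau \in \til W / W$, together with the check that $T_\tau$ acts on $f_0$ with leading support precisely on $U \tau w_0 I$.
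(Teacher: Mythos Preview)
Your strategy and the paper's coincide on the first half: both exhibit an explicit $I$-fixed Whittaker function on which $\mathcal H_W$ acts via $\mathrm{sgn}$, and invoke Frobenius reciprocity to produce $\Phi$. The paper's Iwahori reduces to $\bar B(\mathcal O/\mathfrak p)$, so its generator $\mathrm{ch}_I^\psi$ is supported on the cell $U \cdot (I\cap\bar B)$ rather than on a $w_0$-translate, but this is only a convention shift; your ``main obstacle,'' the verification of $T_s \cdot f_0 = -f_0$, is exactly the computation carried out in Lemma~\ref{lem computations on ch}(3).

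Where the two arguments diverge is in proving $\Phi$ bijective. You propose a direct basis comparison: identify the $\tilde w \in \tilde W$ supporting a nonzero $I$-fixed Whittaker function, match these (up to $w_0$) with minimal-length representatives of $\tilde W/W$, and show $\{T_\tau \cdot f_0\}$ is upper-triangular against $\{f_{\tau w_0}\}$. The paper instead passes to the Jacquet module with respect to $\bar U$: a cell-by-cell argument (Lemma~\ref{lem iso whittaker}) shows every Bruhat cell except the open one dies in $V_{\bar U}$, so $V_{\bar U}\cong C_c^\infty(T)$; then the Borel--Casselman--Matsumoto isomorphism $V^I\cong (V_{\bar U})^{I_T}$ of $\mathcal A$-modules (Theorem~\ref{thm BorelMat}) identifies $V^I$ with $\mathbb C[X]$, free of rank one over $\mathcal A$ on $\mathrm{ch}_I^\psi$. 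Since $\mathcal H\otimes_{\mathcal H_W}\mathrm{sgn}$ is likewise $\mathcal A$-free of rank one on $1\otimes 1$, bijectivity of $\Phi$ is immediate. The paper's route is shorter and structural, outsourcing the hard step to a known comparison theorem and obtaining $\mathcal A$-freeness as a byproduct; your route avoids that citation but trades it for two combinatorial claims---the precise description of the supporting $\tilde w$, and the triangularity---which, as you rightly flag, are where the real work would lie.
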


Bushnell and Henniart \cite{BH} have studied Bernstein components of $\mathrm{ind}_U^G \psi$  and have shown, among other things, that each component
is a finitely generated $G$-module. Our result is therefore a refinement of theirs, for the particular component. 
The use of $\mathcal H \otimes_{\mathcal H_{W}} \mathrm{sgn}$ is independently inspired from the study of \cite{Ch} and \cite{Sa}.
We remark that the occurrence of $\mathrm{sgn}$ for representations admitting Whittaker models appeared in the study of Barbasch-Moy \cite{BM}. 
Our  Corollary \ref{cor recover results} strengthens their result to the category of smooth representations.

Theorem \ref{thm intro realize} plays an important role in the formulation of the Bernstein-Zelevinsky derivatives in the language of Hecke algebras. Let $G_n =GL(n,F)$ and 
$\pi$ a smooth representation of $G_n$. 
The $i$-th Bernstein-Zelevinsky derivative of $\pi$ is a $G_{n-i}$-representation, denoted $\pi^{(i)}$,  
obtained by applying a twisted Jacquet functor on $\pi$, in which the Whittaker character is involved (see Section \ref{ss ZD} for the detailed formulation).

Let $I_n$ denote the Iwahori subgroup of $G_n$ and $\mathcal H_n$ the Iwahori-Hecke algebra. 
The Weyl group of $G_n$ is isomorphic to the group $S_n$ of all permutation matrices.  Let 
$\mathbf S_n \in\mathcal H_{S_n}$ be the sign projector.  For every $i=1, \ldots, n-1$,  $\mathcal H_{n-i} \otimes \mathcal H_i$ is the Iwahori-Hecke algebra of 
a Levi subgroup of $G_n$. Using Bernstein's generators and relations $\mathcal H_{n-i} \otimes \mathcal H_i$ can be viewed as a subalgebra of $ \mathcal H_n$.  In particular, 
the map $h\mapsto h\otimes 1$ realizes $\mathcal H_{n-i}$ as a subalgebra of $\mathcal H_{n}$. Let $\mathbf S_i^n$ be the image in 
$\mathcal H_n$ of $1\otimes \mathbf S_i$, where $\mathbf S_i$ is the sign projector in $\mathcal H_{i}$.  For every $\mathcal H_n$-module $\sigma$, 

\begin{align} \label{eqn intro ZD Hecke} 
\mathbf{BZ}_i(\sigma):=\mathbf S_i^n (\sigma). 
\end{align}
is naturally an $\mathcal H_{n-i}$-module.

\begin{theorem} (Theorem \ref{thm bz aha}) \label{thm intro bz aha}
Let $\pi$ be a smooth representation of $G_n$. 
 Let  $\mathbf{BZ}_i$ be the functor defined in (\ref{eqn intro ZD Hecke}). There is a natural isomorphism of $\mathcal H_{n-i}$-modules
\[     (\pi^{(i)})^{I_{n-i}} \cong\mathbf{BZ}_i(\pi^{I_n}). 
\]
\end{theorem}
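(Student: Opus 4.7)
The plan is to factor the computation through the Jacquet functor along the standard parabolic $P=P_{n-i,i}$ (with Levi $M=G_{n-i}\times G_i$ and unipotent radical $N=N_{n-i,i}$), and then to relate the resulting $G_i$-Whittaker coinvariant to the sign projector via Theorem \ref{thm intro realize} applied to $G_i$.

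For the first step, transitivity of twisted coinvariants for the decomposition $U_i = N\cdot U_{G_i}$ (where $U_{G_i}$ denotes the upper-triangular unipotent of $G_i$) yields
\[
\pi^{(i)}\cong(\pi_{N})_{U_{G_i},\psi}
\]
as $G_{n-i}$-modules. Since $I_{n-i}$ sits inside the $G_{n-i}$-factor of $M$ and commutes with every operation on the $G_i$-factor, the two operations interchange:
\[
(\pi^{(i)})^{I_{n-i}}\cong\bigl((\pi_{N})^{I_{n-i}}\bigr)_{U_{G_i},\psi}.
\]
Setting $\sigma:=(\pi_{N})^{I_{n-i}}$, a smooth $G_i$-module, the problem reduces to identifying $\sigma_{U_{G_i},\psi}$ with $\mathbf{S}_i^n\pi^{I_n}$.

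The core intermediate claim is that for any smooth $G_i$-module $\sigma$ whose Iwahori-spherical block is generated by its $I_i$-fixed vectors, one has a natural isomorphism
\[
\sigma_{U_{G_i},\psi}\cong \mathbf{S}_i\cdot\sigma^{I_i}.
\]
I would approach this via the tensor-product presentation $\sigma_{U_{G_i},\psi}\cong\sigma\otimes_{G_i}\mathrm{ind}_{U_{G_i}}^{G_i}\psi^{-1}$ combined with Bernstein's equivalence on the Iwahori-spherical block, $\sigma\cong\mathrm{ind}_{I_i}^{G_i}(1)\otimes_{\mathcal{H}_i}\sigma^{I_i}$. After rearranging, and using $\mathrm{ind}_{I_i}^{G_i}(1)\otimes_{G_i}V\cong V^{I_i}$ for any smooth left $G_i$-module $V$, one obtains $\sigma_{U_{G_i},\psi}\cong\sigma^{I_i}\otimes_{\mathcal{H}_i}(\mathrm{ind}_{U_{G_i}}^{G_i}\psi^{-1})^{I_i}$. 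At this point Theorem \ref{thm intro realize} applied to $G_i$ supplies the crucial input $(\mathrm{ind}_{U_{G_i}}^{G_i}\psi^{-1})^{I_i}\cong\mathcal{H}_i\mathbf{S}_i$, and the tensor $\sigma^{I_i}\otimes_{\mathcal{H}_i}\mathcal{H}_i\mathbf{S}_i$ collapses (via the anti-involution $T_w\mapsto T_{w^{-1}}$ of $\mathcal{H}_i$ used to convert left to right module structures) onto the sign-isotypic subspace $\mathbf{S}_i\sigma^{I_i}$.

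For the last step, I would invoke the Borel--Casselman--Bernstein fact that the natural map $\pi^{I_n}\to(\pi_{N})^{I_M}$ (with $I_M=I_{n-i}\times I_i$) is an isomorphism of $\mathcal{H}_M=\mathcal{H}_{n-i}\otimes\mathcal{H}_i$-modules, where $\mathcal{H}_M\hookrightarrow\mathcal{H}_n$ via the Bernstein presentation. Since $\sigma^{I_i}=(\pi_{N})^{I_M}$, this identifies $\sigma^{I_i}$ with $\pi^{I_n}$, and the entire chain yields
\[
(\pi^{(i)})^{I_{n-i}}\cong\mathbf{S}_i\sigma^{I_i}\cong\mathbf{S}_i^n\pi^{I_n}=\mathbf{BZ}_i(\pi^{I_n}).
\]
The $\mathcal{H}_{n-i}$-equivariance comes for free, as $\mathbf{S}_i^n$ lies in the $\mathcal{H}_i$-factor of $\mathcal{H}_M\subset\mathcal{H}_n$ and hence commutes with the $\mathcal{H}_{n-i}$-factor. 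The main obstacle is the intermediate step, the identification $\sigma_{U_{G_i},\psi}\cong\mathbf{S}_i\sigma^{I_i}$: twisted coinvariants do not fit Frobenius reciprocity for compact induction, so one must work through the tensor-product presentation and carefully track left/right Hecke-module structures via the anti-involution of $\mathcal{H}_i$. A secondary delicate point is the Iwahori-spherical generation hypothesis on $\sigma=(\pi_N)^{I_{n-i}}$, which should be dealt with by observing that only the principal Bernstein block of $\sigma$ contributes to the tensor, since the $I_i$-fixed part $\mathcal{H}_i\mathbf{S}_i$ of $\mathrm{ind}_{U_{G_i}}^{G_i}\psi^{-1}$ lies entirely in that block.
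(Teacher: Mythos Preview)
Your overall architecture matches the paper's exactly: factor through the Jacquet module along $P=MN$ with $M=G_{n-i}\times G_i$, use the Borel--Casselman isomorphism $\pi^{I_n}\cong(\pi_N)^{I_M}$ to transport the problem to $M$, and then reduce to the key identification $\sigma_{U_{G_i},\psi}\cong\mathbf S_i\sigma^{I_i}$ for the $G_i$-module $\sigma$. The paper packages the last two steps as Lemma \ref{lem zel der} and Lemma \ref{lem nat is s}. Where you diverge is in the proof of that key identification: the paper dualises, writing $(\sigma_{U_{G_i},\psi})^*\cong\Hom_{G_i}(\sigma,\Ind_{U_{G_i}}^{G_i}\psi)\cong\Hom_{G_i}(\ind_{U_{G_i}}^{G_i}\widetilde\psi,\widetilde\sigma)$ and then invoking Corollary \ref{cor realize iwahoir fix vector} and Frobenius reciprocity on the right-hand side (this is Corollary \ref{cor recover results}), whereas you propose the covariant route via $\sigma\otimes_{G_i}\ind_{U_{G_i}}^{G_i}\psi^{-1}$ and Bernstein's equivalence. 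Both are valid and both rest on Theorem \ref{thm intro realize}; the duality argument avoids the left/right bookkeeping you flag, at the cost of an extra step checking that the resulting isomorphism really is the canonical composite.

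One genuine slip: your proposed fix for the Iwahori-generation hypothesis on $\sigma$ is wrong. The Gelfand--Graev representation $\ind_{U_{G_i}}^{G_i}\psi^{-1}$ has nonzero components in \emph{every} Bernstein block, not just the Iwahori block, so non-Iwahori pieces of $\sigma$ do contribute to $\sigma\otimes_{G_i}\ind_{U_{G_i}}^{G_i}\psi^{-1}\cong\sigma_{U_{G_i},\psi}$ (a generic positive-depth supercuspidal gives a counterexample). The statement as written in the introduction is informal; the actual Theorem \ref{thm bz aha} assumes $\pi$ is generated by $I_n$-fixed vectors, which forces $\pi_N$ into the Iwahori block of $M$ and hence $\sigma=(\pi_N)^{I_{n-i}}$ into the Iwahori block of $G_i$. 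With that hypothesis in place your argument goes through.
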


One then can similarly formulate the Bernstein-Zelevinsky derivative for graded Hecke algebras. We check in Section \ref{s bz gha} that Bernstein-Zelevinsky derivatives between affine Hecke algebras and graded Hecke algebras agree under the Lusztig's reductions. A reason for formulating the Bernstein-Zelevinsky derivatives for the graded Hecke algebra is that the theory of the symmetric group is relatively easier to apply. In particular, we use the Littlewood-Richardson rule for computing the Bernstein-Zelevinsky derivatives of 
 generalized Speh representations. For the detailed notations, one refers to Section \ref{s bz speh}.

%As a consequence of Theorem \ref{thm intro bz aha}, the Bernstein-Zelevinsky derivatives of Speh modules, or a slighly larger class which we call generalized Speh modules, attains a simple formulation. 
\begin{corollary} (Corollary \ref{cor bz speh})
Let $\pi$ be a generalized Speh representation of $GL(n,F)$ associated to a partition $\bar{n}$ of $n$. Then the $i$-th Bernstein-Zelevinsky derivatives $\pi^{(i)}$ is the direct sum of generalized Speh representations corresponding to the partitions obtained by removing $i$ boxes from $\bar{n}$ but at most one in each row, such that the resulting diagram is still a Young diagram.

\end{corollary}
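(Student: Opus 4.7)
The plan is to use Theorem \ref{thm intro bz aha} to translate the problem to the Iwahori-Hecke algebra $\mathcal H_n$, and then to apply Lusztig's reductions (Section \ref{s bz gha}) to descend to the graded Hecke algebra of type $A_{n-1}$, where the combinatorics of the symmetric group $S_n$ can be applied directly.

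Concretely, Theorem \ref{thm intro bz aha} identifies $(\pi^{(i)})^{I_{n-i}}$ with $\mathbf S_i^n(\pi^{I_n})$ as an $\mathcal H_{n-i}$-module, so the task reduces to describing the $\mathcal H_n$-module $\pi^{I_n}$ attached to a generalized Speh representation and then applying the sign projector. After Lusztig's reduction, this $\mathcal H_n$-module becomes a module over the graded Hecke algebra of type $A_{n-1}$, which by standard constructions in the Zelevinsky classification should be identified with the Specht module $S^{\bar n}$ (modulo a transposition convention). Under this identification the operator $\mathbf S_i^n$ extracts the $\mathrm{sgn}$-isotypic component of the restriction to the subgroup $S_i$ acting on the last $i$ coordinates, while $\mathcal H_{n-i}$ acts through the complementary factor $S_{n-i}$. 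By Frobenius reciprocity and the Pieri rule for the elementary symmetric function $e_i$, this isotypic component decomposes as
\[
\bigoplus_{\mu} S^{\mu},
\]
summed over partitions $\mu$ of $n-i$ such that $\bar n/\mu$ is a vertical strip of size $i$, i.e.\ $\mu$ is obtained from $\bar n$ by removing $i$ boxes with at most one box per row and the resulting diagram is still a Young diagram. Translating back through Lusztig's reduction, each summand $S^{\mu}$ corresponds to the Iwahori-fixed vectors of the generalized Speh representation attached to $\mu$; since a generalized Speh representation is irreducible and uniquely determined by its Iwahori-fixed vectors within the relevant Bernstein component, this yields the claimed decomposition of $\pi^{(i)}$ itself.

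The main obstacle is the identification of the graded Hecke algebra module attached to a generalized Speh representation with the appropriate Specht module, in particular nailing down whether it is $S^{\bar n}$ or $S^{\bar n'}$ (which would interchange vertical and horizontal strips). Verifying this requires unwinding the realization of Speh representations as Zelevinsky submodules of parabolic inductions from products of twisted Steinberg representations and tracking how these parabolic inductions behave under Lusztig's reduction functors. A secondary technical point is to check that the functor $\mathbf{BZ}_i$ is sufficiently well-behaved with respect to the Bernstein decomposition that the direct summands correspond to genuine generalized Speh representations and not merely to their composition factors.
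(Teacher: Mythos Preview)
Your strategy matches the paper's exactly: pass to $\mathcal H_n$ via Theorem~\ref{thm bz aha}, then to $\mathbb H_n$ via Theorem~\ref{thm real bzder}, and apply Pieri's rule. The two obstacles you flag are precisely what the paper dispatches by constructing the generalized Speh $\mathbb H_n$-module $\sigma_{(\bar n,\kappa)}$ as the pullback of the irreducible $S_n$-module $\sigma_{\bar n}$ along the Jucys--Murphy homomorphism $\mathbb H_n\to\mathbb C[S_n]$ of~(\ref{eqn jucys murphy}). This pins down the convention (it is $\sigma_{\bar n}$, with $(n)$ trivial and $(1^n)$ sign, so $\mathbf s_i^n$ indeed picks out vertical strips), and since the entire $\mathbb H_n$-action factors through $\mathbb C[S_n]$, the restriction to $\mathbb H_{n-i}\otimes\mathbb H_i$ is automatically semisimple with each summand again a pullback from $\mathbb C[S_{n-i}]$, hence a generalized Speh module (Lemma~\ref{lem restrict speh}). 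So your secondary worry evaporates: the summands are honest Speh modules, not merely composition factors, and no separate analysis of the Bernstein component is required.
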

The generalized Speh modules correspond to the single $S_n$-type Hecke algebra modules studied by Barbasch-Moy \cite{BM3} and Ciubotaru-Moy \cite{CM}. 
Because of the simple type structure,  their Bernstein-Zelevinsky derivatives can be computed from the theory of symmetric groups.

We remark that Corollary \ref{cor bz speh} is independently proved by Lapid-M\'inguez \cite{LM} (following a suggestion of Tadi\'c) and their result also covers a larger class which they call ladder representations.

We now turn to another direction of our study on branching problems for the pair $(GL(n+1, F),GL(n,F))$. A useful tool in studying that problem is the Bernstein-Zelevinsky geometric lemma. More precisely, the geometric lemma says that a smooth representation $\pi$ of $GL(n+1,F)$ restricted to the mirabolic subgroup $E_n$ admits a finite $E_n$-filtration such that the successive quotients can be described in terms of certain induction functors and twisted Jacquet functors (see Theorem \ref{thm bz filtration} for the details). We shall call those successive quotients to be the Bernstein-Zelevinsky composition factors. Whittaker characters and Bernstein-Zelevinsky derivatives are involved in defining the functors and hence, in principle, Theorem \ref{thm intro realize} and Theorem \ref{thm intro bz aha} can be applied to study the Bernstein-Zelevinsky composition factors. 

When restricting $\pi$ from $GL(n+1, F)$ to $GL(n,F)$ we shall only consider the Bernstein component (for $GL(n,F)$) of $\pi$ generated by the Iwahori-fixed vectors. Hence the formulation of our results necessitates additional notation involving the Iwahori-Hecke algebra $\mathcal H_n$. Let $\mathcal Z_n$ be the center of $\mathcal H_n$ and let $\mathcal J$ be a maximal ideal in $\mathcal Z_n$. Abusing language, representations annihilated by $\mathcal J$ will be said to have the central character $\mathcal J$. 
Let $\widehat{\mathcal Z}_n$ be  the $\mathcal J$-adic completion of ${\mathcal Z}_n$. 
To study the Bernstein-Zelevinsky composition factors, it is easier to deal with their $\mathcal J$-adic completions. 
Let $\widehat{\mathcal H}_n= \widehat{\mathcal Z}_n \otimes_{\mathcal Z}\mathcal H_n$.
 The $\mathcal J$-adic completion of an $\mathcal H_n$-module $\chi$ is  
   the $\widehat{\mathcal H}_n$-module  $\widehat{\chi}=\widehat{\mathcal Z}_n \otimes_{\mathcal Z_n} \chi$. 
 For a finite-dimensional $\mathcal H_n$-module $\chi$, the $\mathcal J$-adic completion  is simply the summand of  $\chi$ annihilated by a power of $\mathcal J$.

%For the Steinberg representation $\mathrm{St}_3$ of $GL(3,F)$ restricted to $GL(2,F)$, we can compute the local structure of $(\mathrm{St}_3)^{I_2}$ by computing Ext-groups of Bernstein-Zelevinsky composition factors and applying the Mulitplicity-One-Theorem. This is done in Section \ref{s steinberg case}.

It is hard to compute the $\mathcal J$-adic completion of $\pi$ in general. However, some classes of examples of $\pi$, which we call locally nice representations at $\mathcal  J$, have a simple description of the completion. The structure will be explained in Theorem \ref{thm intro local structure} below. See Definition \ref{def relative generic} and Example \ref{ex relatively generic} for the term locally nice.  We know some immediate examples. For example, if there is unique isomorphism class of irreducible representations annihilated by $\mathcal J$,   then any generic representation $\pi$ of $GL(n+1,F)$ is locally nice at $\mathcal J$. 
As another extreme, the Steinberg representation of $GL(n+1,F)$ is locally nice at every central character of $\mathcal H_n$ (see Theorem \ref{thm global st} and Corollary \ref{cor st locally nice}).

%In another extreme, we conjecture that the Steinberg representation of $GL(n+1,F)$ is locally nice with respect to all central characters of $\mathcal H_n$ (also see Proposition \ref{thm steinberg gl3}).

Now we state another consequence of our study:

\begin{theorem} \label{thm intro local structure} (Theorem \ref{thm local structure})
Let $\pi$ be an irreducible generic representation of $GL(n+1,F)$ and let $I_n$ be the Iwahori subgroup of $GL(n,F)$. Regard $(\pi|_{GL(n,F)})^{I_n}$ as an $\mathcal H_n$-module.  
  Let $ \mathcal J$  be a maximal ideal in  ${\mathcal Z}_n$.  
  Suppose $\pi$ is locally nice at $\mathcal J$ (see Definition \ref{def relative generic} and Example \ref{ex relatively generic}). 
Then  the $\mathcal J$-adic completion of $(\pi|_{GL(n,F)})^{I_n}$ is isomorphic to $\widehat{\mathcal H}_n \otimes_{\mathcal H_{S_n}} \mathrm{sgn}$ and hence is projective in the category of $\widehat{\mathcal H}_n$-modules.
\end{theorem}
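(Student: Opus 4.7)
The plan is to combine the Bernstein--Zelevinsky geometric lemma with Theorems \ref{thm intro realize} and \ref{thm intro bz aha}. The mirabolic filtration gives a finite filtration $0 = \tau_0 \subset \tau_1 \subset \cdots \subset \tau_{n+1} = \pi|_{E_n}$ of the restriction of $\pi$ to the mirabolic $E_n \subset GL(n+1,F)$, with successive quotients of the form $\Phi^{k-1}\Psi^+(\pi^{(k)})$ for $k = 1, \dots, n+1$. Since $GL(n,F) \subset E_n$, each $\tau_k$ is automatically $GL(n,F)$-stable, so taking $I_n$-invariants (exact, as $I_n$ is compact open) produces a filtration of $(\pi|_{GL(n,F)})^{I_n}$ by $\mathcal{H}_n$-submodules. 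The aim is then to show that after $\mathcal{J}$-adic completion only the bottom graded piece $\tau_{n+1}/\tau_n$ survives, and that it is precisely the desired module.

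For the bottom piece, since $\pi$ is irreducible generic, the highest derivative $\pi^{(n+1)}$ is one-dimensional, so the corresponding graded quotient is isomorphic as an $E_n$-module to the Gelfand--Graev module $\mathrm{ind}_{U_{n+1}}^{E_n}\psi$. The Mackey decomposition $E_n = GL(n,F)\cdot U_{n+1}$ with $GL(n,F)\cap U_{n+1} = U_n$ furnishes a single double coset, which identifies the restriction of this $E_n$-module to $GL(n,F)$ with $\mathrm{ind}_{U_n}^{GL(n,F)}\psi$. Applying Theorem \ref{thm intro realize} to $GL(n,F)$ therefore gives $(\tau_{n+1}/\tau_n)^{I_n} \cong \mathcal{H}_n \otimes_{\mathcal{H}_{S_n}} \mathrm{sgn}$, and $\mathcal{J}$-adic completion yields exactly $\widehat{\mathcal{H}}_n \otimes_{\mathcal{H}_{S_n}} \mathrm{sgn}$.

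The crux, and the step I expect to be the main obstacle, is to verify that the $\mathcal{J}$-adic completion of $(\tau_k/\tau_{k-1})^{I_n}$ vanishes for every $k \leq n$. By Theorem \ref{thm intro bz aha}, each such graded piece is, after $I_n$-invariants, controlled by the Hecke-algebra Bernstein--Zelevinsky derivative $\mathbf{BZ}_i(\pi^{I_n})$ together with the appropriate parabolic (Bernstein) induction functor on the Hecke algebra side. The hypothesis that $\pi$ is locally nice at $\mathcal{J}$, as set up in Definition \ref{def relative generic} and Example \ref{ex relatively generic}, should be engineered precisely so that $\mathcal{J}$ is not in the support of any of these parabolically induced Hecke modules; extracting this carefully from the definition is the main verification required. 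Once established, the $\mathcal{J}$-adic completion annihilates every lower piece, and the completed filtration collapses onto the Whittaker piece computed in the previous paragraph.

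Finally, projectivity is automatic: the sign character corresponds to an idempotent $e_{\mathrm{sgn}}$ in the finite-dimensional algebra $\mathcal{H}_{S_n}$, so $\mathrm{sgn}$ is a direct summand of $\mathcal{H}_{S_n}$ as a left module, and $\widehat{\mathcal{H}}_n \otimes_{\mathcal{H}_{S_n}} \mathrm{sgn} \cong \widehat{\mathcal{H}}_n \cdot e_{\mathrm{sgn}}$ is a direct summand of $\widehat{\mathcal{H}}_n$.
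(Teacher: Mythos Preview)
Your identification of the bottom Bernstein--Zelevinsky piece with the Gelfand--Graev module and its completion with $\widehat{\mathcal H}_n\otimes_{\mathcal H_{S_n}}\mathrm{sgn}$ is correct, as is your remark on projectivity. The gap is in the ``crux'' step: it is \emph{not} true that local niceness at $\mathcal J$ forces the $\mathcal J$-adic completions of the intermediate graded pieces to vanish, and Definition~\ref{def relative generic} is not engineered for this. That definition constrains only the irreducible \emph{quotients} of $\pi|_{GL(n,F)}$ at $\mathcal J$, whereas an intermediate Bernstein--Zelevinsky layer is merely a subquotient; a non-generic irreducible may well occur there without ever appearing as a quotient of the whole restriction. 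Already for $n=1$ every unramified character of $GL(1,F)$ gives a $\mathcal J$ at which any generic principal series $\pi$ of $GL(2,F)$ is locally nice (Example~\ref{ex relatively generic} applies trivially), yet the top layer $\nu^{1/2}\pi^{(1)}$ contains that character whenever it matches one of the exponents of $\pi$; its completion is then one-dimensional, not zero. The same phenomenon persists for all $n$: choose $\mathcal J$ regular (so Example~\ref{ex relatively generic} applies) and equal to a central character appearing in $\nu^{1/2}\pi^{(1)}$. The paper in fact flags this explicitly after Corollary~\ref{cor ext branching}, noting that these cases ``cannot be merely deduced from the Bernstein--Zelevinsky composition factors.''

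What actually happens is that the filtration is genuinely nonsplit at $\mathcal J$, in such a way that the total completion is still $\widehat{\mathcal H}_n\otimes_{\mathcal H_{S_n}}\mathrm{sgn}$ even though some higher graded pieces survive. The paper's proof captures this without analyzing the individual layers: local niceness plus Nakayama shows that the sign-isotypic part generates $\widehat\chi$; Theorem~\ref{thm multi one} (multiplicity one) forces a single generator, hence a surjection $\widehat{\mathcal H}_n\otimes_{\mathcal H_{S_n}}\mathrm{sgn}\twoheadrightarrow\widehat\chi$; finally the Gelfand--Graev submodule (your correct first step, recorded as Lemma~\ref{lem existence of whittaker space}) together with freeness of $\widehat{\mathcal H}_n\otimes_{\mathcal H_{S_n}}\mathrm{sgn}$ over $\widehat{\mathcal A}_n$ forces this surjection to be injective. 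So the Bernstein--Zelevinsky filtration enters only through its bottom piece, as a lower bound; the upper bound comes from the Hom-side information in the definition of local niceness and in multiplicity one, neither of which your proposal invokes.
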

For some comments on the proof of Theorem \ref{thm intro local structure}, see the paragraphs before Theorem \ref{thm local structure}.

One may think that locally nice representations have the simplest local structure. The complication of the local structure of a restricted generic representation starts to increase outside this class, and hence deeper understanding of the structure is needed. Also, determining the central characters at which a generic representation is locally nice is an interesting problem.

As a consequence, we obtain sufficient structural information to verify a conjecture of D. Prasad for those locally nice representations. We first recall the conjecture:

\begin{conjecture}[Prasad] \cite[Conjecture 1]{Pr2}\label{conj prasad}
Let $\pi_1$ be an irreducible generic representation of $GL(n+1,F)$ and let $\pi_2$ be an irreducible generic representation of $GL(n,F)$. Then 
\[   \mathrm{Ext}^i_{GL_{n}(F)}(\pi_1, \pi_2) =0
\]
for all $i \geq 1$. (Here $\mathrm{Ext}^i_{GL(n,F)}$ is taken in the category of smooth representations of $GL(n,F)$.)
\end{conjecture}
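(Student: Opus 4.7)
The plan is to verify Prasad's conjecture under the locally nice hypothesis identified in Theorem \ref{thm intro local structure} (which, as remarked, already covers all pairs with $\pi_2$ spherical generic). The strategy is to translate the Ext computation into $\mathcal H_n$-modules and then read off vanishing from projectivity of the source.

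First, assume that $\pi_2$ has an Iwahori-fixed vector, so that $\pi_2$ lies in the Iwahori-spherical Bernstein block of $GL(n,F)$. Since distinct Bernstein blocks contribute no Ext to each other, and since Borel's equivalence identifies the Iwahori-spherical block with the category of $\mathcal H_n$-modules via $V \mapsto V^{I_n}$, one gets
\[
\mathrm{Ext}^i_{GL(n,F)}(\pi_1,\pi_2) \;\cong\; \mathrm{Ext}^i_{\mathcal H_n}\bigl((\pi_1|_{GL(n,F)})^{I_n},\,\pi_2^{I_n}\bigr).
\]
Next let $\mathcal J$ be the central character of $\pi_2^{I_n}$. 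Since $\pi_2^{I_n}$ is annihilated by $\mathcal J$ it is its own $\mathcal J$-adic completion, and since $\widehat{\mathcal Z}_n$ is flat over $\mathcal Z_n$ (hence $\widehat{\mathcal H}_n$ is flat over $\mathcal H_n$), completion sends projective resolutions to projective resolutions, yielding
\[
\mathrm{Ext}^i_{\mathcal H_n}\bigl((\pi_1|_{GL(n,F)})^{I_n},\,\pi_2^{I_n}\bigr) \;\cong\; \mathrm{Ext}^i_{\widehat{\mathcal H}_n}\bigl(\widehat{(\pi_1|_{GL(n,F)})^{I_n}},\,\pi_2^{I_n}\bigr).
\]

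Now Theorem \ref{thm intro local structure} identifies the first argument on the right as $\widehat{\mathcal H}_n \otimes_{\mathcal H_{S_n}} \mathrm{sgn}$. Because the finite Hecke algebra $\mathcal H_{S_n}$ is semisimple, $\mathrm{sgn}$ is projective over $\mathcal H_{S_n}$; extension of scalars along the ring map $\mathcal H_{S_n} \to \widehat{\mathcal H}_n$ preserves projectivity via the standard adjunction $\mathrm{Hom}_{\widehat{\mathcal H}_n}(\widehat{\mathcal H}_n \otimes_{\mathcal H_{S_n}} \mathrm{sgn},-) \cong \mathrm{Hom}_{\mathcal H_{S_n}}(\mathrm{sgn},-)$. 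Therefore the source is a projective $\widehat{\mathcal H}_n$-module, all higher Ext groups vanish, and the conjecture follows for $\pi_1$ locally nice at $\mathcal J$.

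The main obstacle I expect is the passage between Ext in smooth $GL(n,F)$-representations and Ext over $\widehat{\mathcal H}_n$: one must check that the two reductions --- first to the Iwahori-spherical block via Borel's equivalence, then to the $\mathcal J$-adic completion --- preserve all derived functors, not merely $\mathrm{Hom}$. This is routine once flatness of $\widehat{\mathcal H}_n$ over $\mathcal H_n$ and exactness of the fixed-vectors functor on the Iwahori-spherical block are in place, but it is the point at which care is needed. Removing the locally nice hypothesis would require an extension of Theorem \ref{thm intro local structure}, since it is precisely that structural result which supplies the explicit projective description driving the vanishing; handling generic $\pi_2$ outside the Iwahori-spherical block would similarly require a suitable analogue of Theorem \ref{thm intro realize} for non-Iwahori types.
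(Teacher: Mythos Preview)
Note first that the statement in question is a \emph{conjecture}: the paper does not prove it in full generality, only under the locally nice hypothesis (this is Corollary~\ref{cor ext branching}). Your proposal is explicit about this limitation and is therefore aimed at exactly what the paper achieves.

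Your argument is correct and follows the same route as the paper's proof of Corollary~\ref{cor ext branching}: pass from $\mathrm{Ext}_{GL(n,F)}$ to $\mathrm{Ext}_{\mathcal H_n}$ via Borel's equivalence on the Iwahori block, then from $\mathrm{Ext}_{\mathcal H_n}$ to $\mathrm{Ext}_{\widehat{\mathcal H}_n}$ via $\mathcal J$-adic completion, and finally invoke Theorem~\ref{thm local structure} to see that the source is projective. The paper compresses all of this into the single displayed isomorphism in the proof of Corollary~\ref{cor ext branching}; you have simply spelled out the two reduction steps and the projectivity argument in more detail. Your closing paragraph accurately identifies both the technical point requiring care (compatibility of the reductions with derived functors) and the genuine obstruction to removing the locally nice hypothesis.
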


\begin{corollary} (Corollary \ref{cor ext branching}) \label{cor intro branch}
Let $\pi_2$ be an irreducible generic representation of $GL(n,F)$ with Iwahori fixed vectors with the central character $\mathcal J$. 
Suppose $\pi_1$ is an irreducible representation of $GL(n+1,F)$ which is locally nice at $\mathcal J$. Then 
\[   \mathrm{Ext}^i_{GL(n,F)}(\pi_1, \pi_2) =0
\]
for all $i \geq 1$. 
\end{corollary}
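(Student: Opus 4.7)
The plan is to reduce the Ext-computation from the category of smooth $GL(n,F)$-representations to the Iwahori-Hecke algebra, and then exploit the projectivity statement already built into Theorem \ref{thm local structure}.

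The first step is to invoke Borel's equivalence. Since $\pi_2$ is generic with a nonzero $I_n$-fixed vector, it belongs to the Bernstein block attached to the Iwahori subgroup, on which the functor $V \mapsto V^{I_n}$ is exact and identifies the block with $\mathcal H_n\text{-mod}$. The compactly induced representation $\mathrm{ind}_{I_n}^{GL(n,F)} \mathbf 1$ serves as a projective generator of this block whose $I_n$-invariants are isomorphic to $\mathcal H_n$, and standard homological machinery then produces a natural isomorphism
\[
\mathrm{Ext}^{i}_{GL(n,F)}\!\left(\pi_1|_{GL(n,F)},\,\pi_2\right) \;\cong\; \mathrm{Ext}^{i}_{\mathcal H_n}\!\left((\pi_1|_{GL(n,F)})^{I_n},\,\pi_2^{I_n}\right)
\]
for every $i\geq 0$.

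The second step is to localize at the central character of $\pi_2$. Since $\pi_2^{I_n}$ is annihilated by $\mathcal J$, it is naturally a $\widehat{\mathcal H}_n$-module. Flatness of the completion $\mathcal H_n \to \widehat{\mathcal H}_n$ (which follows from $\widehat{\mathcal Z}_n$ being flat over the Noetherian ring $\mathcal Z_n$ together with $\widehat{\mathcal H}_n = \widehat{\mathcal Z}_n \otimes_{\mathcal Z_n} \mathcal H_n$) and change-of-rings yield
\[
\mathrm{Ext}^{i}_{\mathcal H_n}\!\left((\pi_1|_{GL(n,F)})^{I_n},\,\pi_2^{I_n}\right) \;\cong\; \mathrm{Ext}^{i}_{\widehat{\mathcal H}_n}\!\left(\widehat{(\pi_1|_{GL(n,F)})^{I_n}},\,\pi_2^{I_n}\right).
\]

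The third step is the key input: by Theorem \ref{thm local structure}, the local niceness assumption on $\pi_1$ at $\mathcal J$ identifies $\widehat{(\pi_1|_{GL(n,F)})^{I_n}}$ with $\widehat{\mathcal H}_n \otimes_{\mathcal H_{S_n}} \mathrm{sgn}$ and asserts its projectivity as a $\widehat{\mathcal H}_n$-module. The final Ext group therefore vanishes for all $i\geq 1$, which combined with the isomorphisms above gives the desired vanishing.

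The main obstacle will be justifying Step 1 carefully, since $\pi_1|_{GL(n,F)}$ is not finitely generated and the Iwahori block component of the restriction may be infinite-dimensional. The cleanest way to handle this is to decompose $\pi_1|_{GL(n,F)}$ according to the Bernstein decomposition and remark that only the Iwahori-block summand can contribute to $\mathrm{Ext}^{i}(-,\pi_2)$, because Ext groups are zero between distinct Bernstein components. On the Iwahori block, the exactness of $(-)^{I_n}$ and existence of enough projectives transported from $\mathrm{ind}_{I_n}^{GL(n,F)} \mathbf 1$ then yield the Ext-isomorphism, and the rest of the argument proceeds within the Hecke algebra framework where we have full control via Theorem \ref{thm local structure}.
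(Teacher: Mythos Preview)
Your proposal is correct and follows essentially the same route as the paper: the paper's proof is a one-line deduction from Theorem \ref{thm local structure} via the isomorphism $\mathrm{Ext}^i_{\mathcal H_n}((\pi_1|_{GL(n,F)})^{I_n}, \pi_2^{I_n})\cong \mathrm{Ext}^i_{\widehat{\mathcal H}_n}(\widehat{(\pi_1|_{GL(n,F)})^{I_n}}, \widehat{(\pi_2)^{I_n}})$, and your Steps 1--3 simply make explicit the Borel equivalence and the flat-base-change justification that the paper leaves implicit.
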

The cases we considered in Corollary \ref{cor intro branch} can be seen as the simplest ones in the sense of Theorem \ref{thm intro local structure}, but still cover some cases that cannot be merely deduced from the Bernstein-Zelevinsky composition factors using the Frobenius reciprocity,  central character considerations  and the Euler-Poincar\'e pairing.

\subsection{} We give some comments on other Bernstein components. We expect that results in this paper hold for other Bernstein components with a suitable reformulation with the use the theory of types by Bushnell-Kutzko \cite{BK, BK2}. However, our approach in Section \ref{s induced rep wc} cannot be adapted directly to other Bernstein components. 

\subsection{Acknowledgements} A part of this work was done during the Sphericity 2016 Conference and Workshop. The authors would like to thank the organizers for providing the excellent environment for discussions. The first author was supported by the Croucher Postdoctoral Fellowship. The second author was supported in part by NSF grant DMS-1359774.

%We give some comments on the functor for Hecke algebras. The formulation of the functors is partly motivated from a characterization of Barbasch-Moy which says a representation with Iwahori-fixed vectors admits a Whittaker model if and only if it contains sign. We use another approach to recover this result and obtain some extra structure for our purpose of study. Some ideas of our approach can be found in the paper of Savin \cite{Sa}.

%\subsection{Acknowledge} Part of this work was done during the Sphericity Conference and Workshop 2016.

\section{Iwahori-fixed vectors for the Gelfand-Graev representation} \label{s induced rep wc}

Let $G$ be a Chevalley group over a $p$-adic field $F$. Let $\mathcal O$ be the ring of integers of $F$, let $\varpi$ be the uniformizer of $F$ and let $\mathfrak{p}$ be the maximal ideal in $\mathcal O$. Let $q=\mathrm{card}(\mathcal O/\mathfrak{p})$. Let $B=TU$ be a Borel subgroup with a maximal unipotent subgroup $U$ and a torus $T$. The torus 
$T$ determines a root system $R$ and $U$ a set of simple roots $\Pi$  and positive roots $R^+$ 
for $R$. Let $W=N_G(T)/T$, where $N_G(T)$ is the normalizer of $T$ in $G$. 
%For each $w \in W$, set $\dot{w} \in N_G(T)$  to be a representative of $w$.  
We fix a Chevalley-Steinberg pinning of $G$. In particular, for every  $\alpha \in R$, we have a one-parameter subgroup in $G$ whose elements are denoted by 
$x_{\alpha}(t)$, where $t\in F$.  The group $U$ is generated by $x_{\alpha}(t)$ for $\alpha\in R^+$. 
 For $\alpha \in R$, let $w_{\alpha}(t)=x_{\alpha}(t)x_{-\alpha}(-t^{-1})x_{\alpha}(t)$. 
 We let $\dot{s}_{\alpha}=w_{\alpha}(1)$, where $s_{\alpha}$ is a reflection associated to $\alpha\in \Pi$. For a choice of reduced expression of
  $w=s_{\alpha_1}\ldots s_{\alpha_r}\in W$, we let $\dot{w}=w_{\alpha_1}(1)\ldots w_{\alpha_r}(1)$. It is a representative of $w$ and,  for $\alpha \in R$, 
  $\dot{w} x_{\alpha}(t) \dot{w}^{-1}=x_{w(\alpha)}(ct)$ for some $c \in \mathcal O^{\times}$.

Let $P$ be a closed subgroup of $G$. Let $(\pi, X)$ be a smooth representation of $P$. Denote by $\mathrm{Ind}_P^G \pi$ the normalized induction.  Denote by $\mathrm{ind}_P^G \pi$ the normalized compact induction. Denote by $\widetilde{\pi}$ the smooth dual of $\pi$.

%\[ \mathrm{ind}_P^G \pi = \left\{ f\in \mathrm{Ind}_P^G \pi : f \mbox{ is compactly supported modulo $P$ } \right\} \]
If $P=MN$ is a parabolic subgroup with the Levi subgroup $M$ and the unipotent radical $N$, denote by $\pi_N$ the normalized Jacquet module of $\pi$.

Let $\overline{\psi}$ be an additive character of $F$ with conductor $\mathfrak{p}$. Fix a Whittaker character $\psi$ of $U$ such that 
\[ \psi\left(\Pi_{\alpha \in R^+} x_{\alpha}(t_{\alpha}) \right) = \overline{\psi} \left(\sum_{\alpha \in \Pi} t_{\alpha} \right) . \] 

Let $V=\mathrm{ind}_U^G \psi$. It is  the space of smooth functions $f$ on $G$ satisfying 
\begin{enumerate}
\item[(1)] $f$ is compactly supported modulo $U$, and 
\item[(2)] $f(ug)=\psi(u)f(g)$ for all $g \in G$, and $u\in U$. 
\end{enumerate}

 Let $\bar B=T\bar U$ be the Borel subgroup opposite to $B$, i.e. $\bar U$ is generated by $x_{\alpha}(t)$ for all $\alpha\in R^-$. 
Let $l: W \rightarrow \mathbb{Z}$ be the length function on $W$.  
Let $V_r$ be the subspace of $V$ consisting of all functions in $V$ supported in the union of cells $X_w=U w T\bar U$ for all $w\in W$  
such that $l(w) \leq r$. Note that each $V_r$ is a $\bar B$-submodule of $V$. 

\begin{lemma} \label{lem iso whittaker} The inclusion $V_0 \subseteq V$ induces an isomorphism of $T$-modules 
$(V_0)_{\bar U} \cong V_{\bar U}$.
\end{lemma}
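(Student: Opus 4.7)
The plan is to reduce to showing $(V/V_0)_{\bar U}=0$ and then establish this via a Mackey-type analysis of each non-trivial cell.  The $\bar U$-coinvariants functor is exact on smooth $\bar U$-representations (since $\bar U$ is a union of compact open subgroups; concretely, Bernstein's criterion characterizes the kernel of $V\twoheadrightarrow V_{\bar U}$ as the set of $v$ with $\int_K r(k)v\,dk=0$ for some compact open $K\subseteq\bar U$).  Applied to $0\to V_0\to V\to V/V_0\to 0$, this gives an exact sequence of $T$-modules $0\to (V_0)_{\bar U}\to V_{\bar U}\to (V/V_0)_{\bar U}\to 0$.  Since the filtration $V_0\subseteq V_1\subseteq\cdots$ exhausts $V$ and coinvariants commute with filtered colimits, it suffices to prove $(V_r/V_{r-1})_{\bar U}=0$ for every $r\geq 1$.

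Each cell $X_w$ is right $\bar B$-stable, so $V_r/V_{r-1}\cong \bigoplus_{l(w)=r}V(X_w)$ as $\bar B$-modules, where $V(X_w)$ is the subspace of $f\in V$ supported on $X_w$.  The map $f\mapsto(b\mapsto f(\dot w b))$ yields a Mackey-type isomorphism $V(X_w)\cong\mathrm{ind}_{\bar U_w}^{\bar B}\psi^w$, where $\bar U_w:=\bar B\cap\dot w^{-1}U\dot w$ is generated by $\{x_\gamma:\gamma\in w^{-1}R^+\cap R^-\}$ and $\psi^w(x):=\psi(\dot w x\dot w^{-1})$; well-definedness of the inverse map uses the identity $\psi(u_2^{-1}u_1)=\psi^w(b_1b_2^{-1})$ forced by any overlap $u_1\dot w b_1=u_2\dot w b_2$.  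For $w\ne e$, some simple $\alpha\in\Pi$ must satisfy $w^{-1}\alpha\in R^-$ (otherwise $w^{-1}$ would permute $R^+$ and force $w=e$), and the Chevalley relations give $\psi^w(x_{w^{-1}\alpha}(y))=\overline\psi(cy)$ for some $c\in\mathcal O^\times$, so $\psi^w|_{\bar U_w}$ is non-trivial.

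Finally, using the semidirect product $\bar B=T\ltimes\bar U$ with $\bar U_w$ stable under $T$-conjugation, the $\bar U$-module $\mathrm{ind}_{\bar U_w}^{\bar B}\psi^w$ forms a $T$-family of $\bar U$-modules $\mathrm{ind}_{\bar U_w}^{\bar U}(\psi^w)^t$, with $(\psi^w)^t(x):=\psi^w(txt^{-1})$ still non-trivial on $\bar U_w$.  Frobenius reciprocity for coinvariants then forces $(\mathrm{ind}_{\bar U_w}^{\bar U}(\psi^w)^t)_{\bar U}=((\psi^w)^t)_{\bar U_w}=0$, and assembling the slices yields $(V(X_w))_{\bar U}=0$ for every $w\ne e$, which combined with the reductions above gives the claim.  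The main obstacle will be making the $T$-family step precise given that $T$ is non-compact; the pragmatic route is to invoke Bernstein's criterion directly on each $f\in V(X_w)$, picking a compact open subgroup inside the $w^{-1}\alpha$-root subgroup over which the relation $f(g\cdot x_{w^{-1}\alpha}(y))=\overline\psi(c\gamma(t)y)f(g')$ (from the Chevalley conjugation through $\dot w$ and the Whittaker transformation) forces the average to vanish uniformly in $t$ on the compact support.
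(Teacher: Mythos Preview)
Your overall strategy is the paper's: filter by Bruhat cells, use exactness of $\bar U$-coinvariants, and kill each non-identity piece via the non-triviality of $\psi^w$. Two points need repair. First, your $V(X_w)$ as literally defined---the subspace of $f\in V$ with $\mathrm{supp}(f)\subseteq X_w$---is zero for $l(w)>0$, since $X_w$ is then closed and nowhere dense in $G$ (already in $SL_2$ the small cell is the locus $d=0$) and a locally constant function on $G$ supported there vanishes. What you need is the space $V_w$ of smooth sections over $X_w$ (functions on $X_w$, not on $G$, with the $\psi$-equivariance and compact support modulo $U$), reached by restricting $f\in V_r$; the paper only uses the resulting injection $V_r/V_{r-1}\hookrightarrow\bigoplus_{l(w)=r}V_w$, which suffices by exactness.

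Second, the ``$T$-family'' step is a real gap, as you acknowledge: since $T$ is non-compact, $\mathrm{ind}_{\bar U_w}^{\bar B}\psi^w$ is not a direct sum of the slices $\mathrm{ind}_{\bar U_w}^{\bar U}(\psi^w)^t$, so slicewise Frobenius reciprocity does not immediately apply. Your ``pragmatic route'' is the right fix and is exactly what the paper does, but note that averaging only over a compact open in the single root subgroup $\bar U_{w^{-1}\alpha}$ runs into commutator terms when you push $x_{w^{-1}\alpha}(y)$ past a general $\bar u$; these can contribute extra phases and spoil the clean formula $f(g\cdot x_{w^{-1}\alpha}(y))=\overline\psi(c\gamma(t)y)f(g')$ outside type $A$. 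The paper sidesteps this by choosing a compact open $\bar U_c\subseteq\bar U$ with $\int_{U\cap w\bar U_c w^{-1}}\psi\,du=0$, then, for $f$ supported on $U\dot w T_f\bar U_f$ with $T_f$ compact, enlarging $\bar U_f$ to a compact open subgroup containing $t^{-1}\bar U_c t$ for every $t\in T_f$; averaging $f$ over $\bar U_f$ then vanishes identically. This is the uniform-in-$t$ argument you are gesturing at, and I recommend replacing the Mackey detour by this direct computation.
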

\begin{proof}
For every $w\in W$, let $V_w$ be the space of smooth functions $f$ on $X_w$ such that $f(ux)=\psi(u) f(x)$ for all $u\in U$ and $x\in X_w$, and such that the support of $f$ 
is contained in $U wT_f  \bar U_f$ where $T_f$ is a compact subset of $T$ and $\bar U_f$ a compact subset of $\bar U$, both depending on $f$. 
For $r \geq 1$ we have an exact sequence 
\[ 
0 \rightarrow V_{r-1} \rightarrow V_r \rightarrow \bigoplus_{l(w)=r} V_w
\] 
obtained by restricting functions $f\in V_r$ to $X_w$ for $l(w)=r$. 
Each $V_w$ is an $\bar U$-module under the action by right translations. For $\bar u\in  \bar U$, let $R(\bar u)$ denote the right translation action. 
 
 Claim: $(V_w)_{U}=0$, if $l(w)>0$.  Proof: If $l(w)>0$, then there exists an open compact subgroup $\bar U_c$ of $\bar U$ such that 
\[ 
\int_{U \cap w \bar U_c {w}^{-1}} \psi (u)~ du=0. 
\] 
Let $f\in V_w$, and assume that $f$ is supported in $U\dot{w} T_f  \bar U_f$ where $T_f$ is a compact subset of $T$ and $\bar U_f$ a compact subset of $\bar U$. 
We can enlarge $\bar U_f$ so that it is a subgroup of $\bar U$ and, for every $t\in T_f$, $t  \bar U_f t^{-1}$ contains $\bar U_c$. 
It is a simple check that  
\[ 
\int_{\bar U_f} R(\bar u) (f) ~d\bar u =0. 
\] 
This proves the claim. 

By the exactness of the Jacquet functor, the claim implies that the inclusion $V_0\subset V$  of $\bar B$-modules gives an isomorphism 
$(V_0)_{\bar U}\cong V_{ \bar U}$ of $T$-modules. 
\end{proof}

\begin{proposition} (also see \cite[Theorem 1]{Sa}) \label{prop savin realize}
There exists an isomorphism of $T$-modules 
\[ \Phi: V_{U} \rightarrow C_{c}^{\infty}(T) 
\]
\end{proposition}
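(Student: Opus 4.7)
The plan is to build on \leref{iso whittaker}, which reduces the computation of the Jacquet module $V_{\bar U}$ to that of $(V_0)_{\bar U}$, and then to describe $V_0$ explicitly via the open Bruhat decomposition so that the computation of $(V_0)_{\bar U}$ becomes an integration over $\bar U$ landing in $C_c^{\infty}(T)$.

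For the first step, the product map $U \times T \times \bar U \to X_e = UT\bar U$ is an $F$-analytic isomorphism onto the big cell. Combined with the Whittaker transformation rule $f(ug) = \psi(u) f(g)$, this yields a linear isomorphism
\[
V_0 \;\xrightarrow{\ \sim\ }\; C_c^{\infty}(T \times \bar U), \qquad f \;\longmapsto\; \bigl((t,\bar u) \mapsto f(t\bar u)\bigr),
\]
whose inverse extends $\phi \in C_c^{\infty}(T \times \bar U)$ across $U$ by $\psi$-equivariance and by zero outside $X_e$. I would then check equivariance: $\bar U$-equivariance under right translation is immediate, and $T$-equivariance requires commuting $t$ through $\bar u$ via conjugation, whose Jacobian contributes a power of $\delta_{\bar B}$ that must be reconciled with the normalization of the Jacquet functor.

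For the second step, I would compute the $\bar U$-coinvariants of $C_c^{\infty}(T \times \bar U)$ under right translation. The averaging map
\[
\phi \;\longmapsto\; \Bigl(t \mapsto \int_{\bar U} \phi(t,\bar u)\, d\bar u\Bigr)
\]
is surjective onto $C_c^{\infty}(T)$, and its kernel equals the span of $\{R(\bar u)\phi - \phi\}$ by a standard argument: any $\phi$ is invariant under some compact open subgroup $\bar U_0 \subseteq \bar U$, hence is a finite combination of characteristic functions of right $\bar U_0$-cosets $\bar u_i \bar U_0$; the identifications $[\mathbf{1}_{\bar u_i \bar U_0}] = [\mathbf{1}_{\bar U_0}]$ in coinvariants reduce vanishing of the averaged image to vanishing of a finite scalar sum. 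Composing this quotient with the isomorphism of \leref{iso whittaker} then produces the desired $T$-module isomorphism $\Phi: V_{\bar U} \to C_c^{\infty}(T)$.

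The main technical obstacle is the bookkeeping of modular characters: the normalizations of $\mathrm{ind}_U^G\psi$, the conjugation Jacobian of $T$ acting on $\bar U$, and the normalized Jacquet functor each contribute powers of $\delta_{\bar B}^{\pm 1/2}$. These factors must combine so that $\Phi$ intertwines the normalized $T$-action on $V_{\bar U}$ with either right translation on $C_c^{\infty}(T)$ or an explicit character twist thereof; once this is settled, the construction of $\Phi$ is essentially formal.
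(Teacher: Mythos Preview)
Your proposal is correct and follows essentially the same route as the paper: reduce to $(V_0)_{\bar U}$ via \leref{iso whittaker}, identify $V_0$ with $C_c^{\infty}(T\times \bar U)$ through the big cell, and then pass to $\bar U$-coinvariants by the integration map $f\mapsto \int_{\bar U} f(t\bar u)\,d\bar u$. The paper's proof is terser---it declares the integration map an isomorphism without spelling out the kernel argument or the modular-character bookkeeping you flag---so your version is in fact more complete on those points.
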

\begin{proof}
By Lemma \ref{lem iso whittaker}, it suffices to construct an isomorphism of $T$-modules between $(V_0)_{\bar U}$ and $C_{c}^{\infty}(T)$. An element in 
$V_0$ is a function supported on the open cell $UT\bar U$ and the restriction to $T\bar U$ gives a bijection between $V_0$ and compactly supported 
functions on $T\bar U$.  Fix an invariant measure on $\bar U$ such that the measure of $\bar U \cap I$ is 1. (This is a natural normalization coming from the 
pinning.) It is easy to check that the map from $V_0$ to $C_{c}^{\infty}(T)$ defined by 
\[  
f\mapsto  f_{U}(t) =\int_U f(t\bar u)~d\bar u 
\]
descends to an isomorphism of $(V_0)_{\bar U}$ and $C_{c}^{\infty}(T)$.  This gives $\Phi$.  
\end{proof}

%\begin{lemma}
%There exists a non-zero scalar $\lambda \in \mathbb{C}$ such that
%\[ \Phi(\mathrm{ch}_{\dot{w}_0I})=\lambda \cdot \mathrm{ch}_{ I_T}  
%\]
%\end{lemma}
%\begin{proof}
%For notation simplicity, set $f=\mathrm{ch}_{I}$. The assertion follows from the following expression:
%\begin{align*}
% \Phi(\mathrm{ch}^{\psi}_{\dot{w}_0I})(t) &= \int_{u\in U} \mathrm{ch}_{I}(t\dot{w}_0u) du=\int_{u \in U \cap I} \mathrm{ch}_{I}(t\dot{w}_0u) du , \\
%\end{align*}
%where $t \in T$. 
%\end{proof}

\subsection{Iwahori-Hecke algebra action} \label{ss IHA} 
The choice of Chevalley-Steinberg pinning gives a structure to $G$ of a group scheme over $\mathcal O$ such that $G(\mathcal O)$ is a hyperspecial maximal 
compact subgroup. 
Let $I$ be the Iwahori subgroup of $G$ which is the inverse image of $\bar B(\mathcal O/\mathfrak p)$ under the map $G(\mathcal O) \rightarrow  G(\mathcal O/\mathfrak{p})$. 
 Let $\mathcal H=C_c(I \setminus G/ I)$ be the convolution algebra of compactly supported $I$-bi-invariant functions on $G$. 
 The double cosets are parameterized by an extended affine Weyl group $W_{\mathrm{ex}}=N_G(T)/T(\mathcal O)$. For $w \in W_{\mathrm{ex}}$, let $T_w$ be the characteristic function of the double coset $I{w}I$. We shall normalize the measure on $G$ such that $T_1$ is an identity element, equivalently, the volume $\mathrm{vol}(I)=1$.

Recall that  $q=\mathrm{card}(\mathcal O/\mathfrak{p})$. Define the length function $l: W_{\mathrm{ex}} \rightarrow \mathbb{Z}$ such that
\[  q^{l(w)}=[I{w}I: I]=[I:(I \cap {w}^{-1}I{w})] 
\]
Then we have $T_{w_1}T_{w_2}=T_{w_1w_2}$ if $l(w_1w_2)=l(w_1)+l(w_2)$ and $(T_s-q)(T_s+1)=0$ for $l(s)=1$. 

Let $X=\Hom(\mathbb G_m, T)$ be the co-character lattice.  
Then $T\cong X\otimes_{\mathbb Z} F^{\times}$, and $X$ can be considered a subgroup of $T$ by the homomorphism $x\mapsto \dot{x}=x\otimes \varpi^{-1}$. (Note the inverse!) 
 This homomorphism gives a bijection $X\cong T/T(\mathcal O)$.  It extends to an isomorphism between a semi-direct product of $X$ and $W$ and $W_{\mathrm{ex}}$ 
by mapping $w\in W$ to its representative $\dot{w}\in N_G(T)$ defined earlier.  Let $\langle \cdot, \cdot\rangle$ be the natural pairing between the co-character and character lattices. 
Let 
\[ 
X_{\mathrm{dom}}= \left\{ x \in X: \langle x, \alpha \rangle \geq 0 \right\} .
\]
Any element $x \in X$ can be written as a linear combination as $x=y-z$ for $y,z \in X_{\mathrm{dom}}$. 
Following from Bernstein, let $\theta_x=q^{-(l(y)-l(z))/2}T_yT_z^{-1}$. 
Let $\mathcal A$ be the commutative subalgebra of $\mathcal H$ generated by $\theta_x$ for $x \in X$. 
The algebra $\mathcal A$ is isomorphic to the group algebra 
$\mathbb C[X]$, by the isomorphism $x\mapsto \theta_x$. 

For a smooth representation $(\pi, E)$ of $G$, denote by $E^I$ or, abusing notation by $\pi^I$ if the vector space $E$ is not specified, 
 the subspace of $I$-fixed vectors of $\pi$. 
The space $\pi^I$ is equipped with a $\mathcal H$-module structure by convolution. 

Let $I_T=I \cap T=T(\mathcal O)$. For any $T$-module, the subspace of $I_T$-fixed vectors is a module for $T/T(\mathcal O)\cong X$. Thus, it is a $\mathbb C[X]$-module. 
We have the following theorem, due to Borel, Casselman, Matsumoto and Bernstein \cite{Bo}: 

\begin{theorem} \label{thm BorelMat} 
Let $(\pi, E)$ be a smooth $G$-module. As $\mathcal A\cong \mathbb C[X]$-modules
\[   E^{I} \cong E_{\bar U}^{I_T} 
\]
The isomorphism map is defined from the natural map from $E$ to $E_{\bar U}$.
\end{theorem}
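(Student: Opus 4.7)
The plan is to define $\Phi\colon E^I \to E_{\bar U}^{I_T}$ as the restriction of the natural projection $E \twoheadrightarrow E_{\bar U}$. This is well-defined because the projection is $T$-equivariant and $I \cap T = I_T$. The task is to verify $\mathcal A$-equivariance, injectivity, and surjectivity.

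For $\mathcal A$-equivariance, it suffices by Bernstein's relation $\theta_x = q^{-(l(y)-l(z))/2} T_y T_z^{-1}$ with $x = y-z$, $y,z \in X_{\mathrm{dom}}$, to compute the action of $T_y$ for $y$ dominant. Write $\bar U_I = \bar U \cap I$ and $U_I = U \cap I$. The Iwahori factorization of $I$ together with the dominance property $\dot y \bar U_I \dot y^{-1} \subseteq \bar U_I$ yields $I\dot y I = \bar U_I\, \dot y\, I$, with coset representatives $\bar u \, \dot y$ for $\bar u \in \bar U_I / \dot y \bar U_I \dot y^{-1}$ (of cardinality $q^{l(y)}$). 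Hence $\pi(T_y)v = \sum_{\bar u} \pi(\bar u\, \dot y) v$; projecting to $E_{\bar U}$, where $\bar U$ acts trivially, gives $q^{l(y)} (\pi(\dot y) v)_{\bar U}$. Combining the factor $q^{-l(y)/2}$ in $\theta_y$ with $\delta_{\bar B}^{-1/2}(\dot y) = q^{l(y)/2}$ for dominant $y$ identifies this with the normalized $T$-action of $\dot y$, which is precisely the action of $y \in X$ on $E_{\bar U}^{I_T}$ under $\mathcal A \cong \mathbb C[X]$. Multiplicativity then extends equivariance to all $x \in X$.

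For injectivity, suppose $\Phi(v) = 0$ for some $v \in E^I$. By Casselman's characterization of the Jacquet kernel, there is a compact open subgroup $\bar U_0 \subseteq \bar U$ with $\int_{\bar U_0}\pi(\bar u) v \, d\bar u = 0$. Reindexing the coset sum via $\bar u\, \dot y = \dot y(\dot y^{-1} \bar u \dot y)$ and using the $\bar U_I$-invariance of $v$ rewrites $\pi(T_y) v = c_y\, \pi(\dot y) \int_{\dot y^{-1} \bar U_I \dot y} \pi(\bar u')v\, d\bar u'$ for a positive constant $c_y$. For $y$ sufficiently dominant, $\dot y^{-1} \bar U_I \dot y \supseteq \bar U_0$, and partitioning the domain into $\bar U_0$-cosets forces the integral to vanish. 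Thus $\pi(T_y) v = 0$. Since the quadratic relation $(T_s - q)(T_s + 1) = 0$ makes each affine simple generator $T_s$ invertible in $\mathcal H$, so is $T_y$, giving $v = 0$.

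Surjectivity is the main technical obstacle. Given $\bar v \in E_{\bar U}^{I_T}$, I would take any preimage $v_0 \in E$ and set $v_1 = \pi(e_{\bar U_I})\pi(e_{I_T}) v_0$, which is $\bar U_I I_T$-fixed and still projects to $\bar v$ (the $\bar U_I$-averaging is trivial on $E_{\bar U}$, and $I_T$-averaging preserves $\bar v$). The obstruction is imposing $U_I$-invariance without disturbing the image, since the projection does not intertwine the $U_I$-action. The strategy is to translate $v_1$ by $\pi(\dot y)$ for $y \in X_{\mathrm{dom}}$ so large that the smooth invariance group of $v_1$ is conjugated to contain $U_I$, producing a $U_I$-invariant vector whose Jacquet image is the $\mathcal A$-translate $y \cdot \bar v$; applying the inverse Hecke operator $T_y^{-1}$ (invertible by the above) with the scaling prescribed by $\theta_y$, and re-averaging over $\bar U_I I_T$, then yields an $I$-invariant vector. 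The equivariance and injectivity already established are used to verify that the result projects to exactly $\bar v$. Tracking the compatibility between the $\dot y$-translation, the Hecke inversion, and the Iwahori-factorized averaging is where the delicate bookkeeping lies, and constitutes the main technical step.
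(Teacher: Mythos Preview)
The paper does not prove this theorem; it is quoted as a classical result and attributed to Borel, Casselman, Matsumoto and Bernstein with a reference to \cite{Bo}. So there is no ``paper's own proof'' to compare against. Your approach is essentially the standard one found in those references (particularly Casselman's notes): the $\mathcal A$-equivariance via the coset decomposition $I\dot y I=\bar U_I\,\dot y\, I$ for dominant $y$, and the injectivity via Casselman's vanishing criterion combined with invertibility of $T_y$, are both correct and complete as written.

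Your surjectivity sketch is also correct, and the ``delicate bookkeeping'' you flag can be finished cleanly. Given $v_1$ fixed by $\bar U_I\, I_T$ and projecting to $\bar v$, choose $y$ dominant enough that $\dot y^{-1} U_I \dot y$ lies in the stabilizer of $v_1$; then $\pi(\dot y)v_1$ is fixed by $U_I\, I_T$. Now simply apply the full idempotent $e_I$: using the Iwahori factorization $I=\bar U_I\cdot I_T\cdot U_I$ and the product decomposition of Haar measure, $e_I\,\pi(\dot y)v_1=e_{\bar U_I}\,\pi(\dot y)v_1$, so the image in $E_{\bar U}$ is unchanged and equals $\dot y\cdot\bar v$ (unnormalized). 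Comparing with the equivariance computation $\Phi(\theta_y v)=q^{l(y)/2}\,\dot y\cdot\Phi(v)$ shows that $q^{l(y)/2}\,\theta_y^{-1}\,e_I\,\pi(\dot y)v_1\in E^I$ maps to $\bar v$. No further averaging or inversion of $T_y$ beyond $\theta_y^{-1}\in\mathcal A$ is needed; the step you call the ``main technical step'' is in fact short once you average over all of $I$ at once rather than trying to preserve $U_I$-invariance through a separate $\bar U_I$-averaging.
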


We shall apply this result to $V=\ind_U^G(\psi)$. 
By Proposition \ref{prop savin realize}, we  $V_{\bar U} \cong C^{\infty}_c(T)$.  Note that $C^{\infty}_c(T)^{I_T}\cong C_c(T/I_T)\cong \mathbb C[X]$. 
Let $\mathrm{ch}_{I_T}\in C^{\infty}_c(T)$ be the characteristic function of $I_T$. Under the isomorphism $C^{\infty}_c(T)^{I_T}\cong \mathbb C[X]$, 
the function $\mathrm{ch}_{I_T}$ corresponds to $1\in \mathbb C[X]$. Thus it is a generator of this $\mathbb C[X]$-module. We shall now describe a corresponding generator in 
$V^I$ in the following lemma.

\begin{lemma} \label{lem computations on ch}
 Let $\mathrm{ch}_{I}^{\psi}$ be a function on $G$, supported on $U\cdot (I\cap \bar B)$ such that $\mathrm{ch}_{I}^{\psi}(u i)=\psi(u)$ for all 
 $u\in U$ and $i\in \bar B \cap I$.  
Then 
\begin{enumerate}
\item $\mathrm{ch}_I^{\psi} \in V_0$,
\item $\mathrm{ch}_I^{\psi} \in V^I$,
\item $T_w \cdot \mathrm{ch}_I^{\psi}=(-1)^{l(w)}\mathrm{ch}_I^{\psi}$, for $w \in W$, and
\item $\Psi(\mathrm{ch}_I^{\psi})=\mathrm{ch}_{I_T}$, where $\Psi$ is the isomorphism of $V^I$ and $V_{\bar U}^{I_T}$. 
\end{enumerate}
\end{lemma}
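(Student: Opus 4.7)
The four parts have increasing depth: (1) and (2) are formal consequences of the Iwahori factorization of $I$ and the conductor hypothesis on $\bar\psi$, (4) is a direct application of the formula of Proposition \ref{prop savin realize}, and (3) requires the main calculation.

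For (1), the Iwahori factorization $I\cap\bar B = (I\cap T)(I\cap\bar U)$ places $I\cap\bar B$ inside $T\bar U$, so the support $U\cdot(I\cap\bar B)$ is contained in $UT\bar U = X_e$, and hence $\mathrm{ch}_I^\psi\in V_0$. For (2), note first that $\mathrm{ch}_I^\psi$ is unambiguously defined on $U\cdot(I\cap\bar B)$: if $u_1 i_1 = u_2 i_2$ with $u_j\in U$ and $i_j\in I\cap\bar B$, then $u_2^{-1}u_1 = i_2 i_1^{-1}\in U\cap\bar B = \{1\}$. Moreover, the second Iwahori decomposition $I = (I\cap U)(I\cap\bar B)$ gives $U\cdot(I\cap\bar B) = UI$, which is manifestly right $I$-stable. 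For $g=ui\in UI$ and $j\in I$, write $ij = u_1 i_1$ with $u_1\in I\cap U = U(\mathfrak p)$ and $i_1\in I\cap\bar B$; then $gj = (uu_1)i_1$, and since $\bar\psi$ has conductor $\mathfrak p$, $\psi$ is trivial on $U(\mathfrak p)$, so $\psi(uu_1) = \psi(u)$ and the value is preserved.

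For (3), I would reduce to the case of a simple reflection $s=s_\alpha$ by induction on $l(w)$: if $w=w's$ with $l(w)=l(w')+1$, then $T_w = T_{w'}T_s$, and applying the simple case first gives $T_w\cdot\mathrm{ch}_I^\psi = -T_{w'}\cdot\mathrm{ch}_I^\psi = (-1)^{l(w)}\mathrm{ch}_I^\psi$. For simple $s$, use the coset decomposition $I\dot s I = \bigsqcup_{c\in\mathcal O/\mathfrak p} x_{-\alpha}(c)\dot s\, I$ (arising because $I\cap\dot s I\dot s^{-1}$ differs from $I$ exactly in the $x_{-\alpha}$-content), which gives
$$(T_s\cdot f)(g) = \sum_{c\in\mathcal O/\mathfrak p} f(g\, x_{-\alpha}(c)\dot s)$$
for any $f\in V^I$. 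At $g=1$, the Bruhat identity inside the $SL_2^{(\alpha)}$-subgroup yields, for $c\in\mathcal O^\times$,
$$x_{-\alpha}(c)\dot s = x_\alpha(c^{-1})\cdot\alpha^\vee(c^{-1})\cdot x_{-\alpha}(-c^{-1}),$$
whose last two factors lie in $(I\cap T)(I\cap\bar U) = I\cap\bar B$; hence $\mathrm{ch}_I^\psi(x_{-\alpha}(c)\dot s) = \bar\psi(c^{-1})$. For $c\in\mathfrak p$, the element $x_{-\alpha}(c)\dot s$ lies in $\dot s\, I$, which is disjoint from $UI$ since $\dot s\notin UT\bar U\supseteq UI$, and contributes zero. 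The sum collapses to $\sum_{c\in\mathbb F_q^\times}\bar\psi(c^{-1}) = -1$ by the Gauss-sum identity for the nontrivial additive character $\bar\psi$ of $\mathcal O/\mathfrak p$. This propagates to $T_s\cdot\mathrm{ch}_I^\psi = -\mathrm{ch}_I^\psi$ on all of $UI$ by left $(U,\psi)$-equivariance and right $I$-invariance. For a representative $\dot w$ with $w\in W_{\mathrm{ex}}\setminus\{1\}$, the rewriting $\dot w\cdot x_{-\alpha}(c)\dot s = \dot{ws}\cdot x_\alpha(\pm c)\cdot t_0$ with $t_0\in T(\mathcal O)\subseteq I$ reduces the sum to $\sum_{c'}\mathrm{ch}_I^\psi(\dot{ws}\cdot x_\alpha(c'))$. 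A Bruhat analysis of when $\dot{ws}\, x_\alpha(c')\in UI$ shows that either every term vanishes (when $ws\notin\{1,s\}$) or the nonzero terms form a Gauss-type sum $\sum_{c'}\bar\psi(c') = 0$ (when $ws = 1$, i.e.\ $w = s$); in both cases the total is zero, matching $-\mathrm{ch}_I^\psi(\dot w) = 0$.

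For (4), since $\mathrm{ch}_I^\psi\in V_0$ by (1), we may apply the explicit formula from Proposition \ref{prop savin realize}:
$$\Psi(\mathrm{ch}_I^\psi)(t) = \int_{\bar U}\mathrm{ch}_I^\psi(t\bar u)\,d\bar u.$$
The integrand is nonzero only where $t\bar u\in U\cdot(I\cap T)(I\cap\bar U)$; matching coordinates in $U\times T\times\bar U$ forces the $U$-component to be trivial, $t\in I\cap T = I_T$, and $\bar u\in I\cap\bar U$. With the normalization $\mathrm{vol}(I\cap\bar U)=1$ the integral equals $\mathrm{ch}_{I_T}(t)$. The principal obstacle in the whole argument is the case analysis in (3) for $w\in W_{\mathrm{ex}}\setminus\{1\}$: though conceptually short, it requires careful bookkeeping of the $T(\mathcal O)$-cocycles relating the different representatives $\dot w$, $\dot s$, $\dot{ws}$ in $N_G(T)$, and a verification that the Bruhat-cell membership of $\dot{ws}\cdot x_\alpha(c')$ behaves as claimed for each $w\in W_{\mathrm{ex}}$.
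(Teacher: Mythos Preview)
Your proof is correct and follows essentially the same route as the paper's. Parts (1), (2), and (4) are handled identically; for (3) both you and the paper reduce to a simple reflection $s_{\alpha}$, evaluate $(T_{s_{\alpha}}\cdot \mathrm{ch}_I^{\psi})(\dot w)$ at each $w\in W_{\mathrm{ex}}$ via the coset decomposition $I\dot s_{\alpha} I=\bigsqcup_{c\in \mathcal O/\mathfrak p} x_{-\alpha}(c)\dot s_{\alpha}\, I$, compute the $SL_2$ identity at $w=1$ to obtain $\sum_{c\in(\mathcal O/\mathfrak p)^{\times}}\bar\psi(c^{-1})=-1$, and obtain the additive character sum $\sum_c\bar\psi(c)=0$ at $w=s_{\alpha}$.

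The only substantive difference is in the vanishing for $w\notin\{1,s_{\alpha}\}$. The paper disposes of this in one stroke by quoting the Bruhat lemma
\[
UwIs_{\alpha}I=\begin{cases} Uws_{\alpha}I & \text{if }\mathbf w(\alpha)<0,\\ Uws_{\alpha}I\cup UwI & \text{if }\mathbf w(\alpha)>0,\end{cases}
\]
so the sum is supported on $UI$ only when $w\in\{1,s_{\alpha}\}$. Your rewriting $\dot w\, x_{-\alpha}(c)\dot s=\dot{ws}\cdot x_{\alpha}(\pm c')\cdot t_0$ followed by a case analysis on $\mathbf{ws}(\alpha)$ amounts to reproving that lemma in situ; it works, but the cocycle bookkeeping you flag as ``the principal obstacle'' is precisely what the Bruhat lemma packages cleanly. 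Stating that lemma up front would shorten your argument and remove the need for the acknowledged caveat.
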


\begin{proof}
(1) is obvious. (2) follows from the decomposition $I=(I\cap U) \cdot (I\cap \bar B)$ and the fact that $\psi$ is trivial on $I\cap U$. 
For (3) it suffices to check the equation for $T_{s_{\alpha}}$, where $s_{\alpha}$ is the reflection corresponding to a simple root $\alpha$. Using the decomposition 
$G=U W_{\mathrm{ex}} I$ (see \cite{HKP}) we need to compute $T_{s_{\alpha}} \cdot \mathrm{ch}_I^{\psi}(w)$ for every $w\in W_{\mathrm{ex}}$: 
\[
 T_{s_{\alpha}}\cdot \mathrm{ch}_{I}^{\psi}(w) = \int_{g \in IsI} \mathrm{ch}_{I}^{\psi}(wg) dg = \sum_{t \in \mathcal O/\mathfrak{p}} \mathrm{ch}^{\psi}_{I}(wx_{-\alpha}(t) w_{\alpha}(1)). 
\]
Let $\mathbf w$ be the projection of $w$ in $W$. We need the following version of Bruhat lemma, recall that $\alpha$ is a simple root: 
\[ 
UwIs_{\alpha}I=
\begin{cases} 
Uws_{\alpha}I \text{ if } \mathbf w(\alpha)<0 \text{ and } \\
Uws_{\alpha}I \cup UwI  \text{ if } \mathbf w(\alpha)>0. \\
\end{cases} 
\] 
Hence $T_{s_{\alpha}}\cdot \mathrm{ch}_{I}^{\psi}(w) =0$ if $w\neq s_{\alpha},1$. Assume now that $w=s_{\alpha}$, and represent it by $w_{\alpha}(-1)=w_{\alpha}(1)^{-1}$. Then 
\[ 
\sum_{t \in O/\mathfrak{p}} \mathrm{ch}^{\psi}_{I}(w_{\alpha}(-1)x_{-\alpha}(t) w_{\alpha}(1))= 
\sum_{t \in O/\mathfrak{p}} \mathrm{ch}^{\psi}_{I}(x_{\alpha}(-t))=  \sum_{t \in O/\mathfrak{p}}\psi(t)=0. 
\] 
If $w=1$, then $\mathrm{ch}^{\psi}_{I}(x_{-\alpha}(t) w_{\alpha}(1))=0$ unless $t\in \mathcal O^{\times}$. If $t\in \mathcal O^{\times}$ then the relation 
\[
x_{-\alpha}(t) w_{\alpha}(1)= \begin{pmatrix} 1 & 0 \\ t & 1 \end{pmatrix} \begin{pmatrix} 0 & 1 \\ -1 & 0 \end{pmatrix} =\begin{pmatrix} 1 & t^{-1} \\ 0 & 1 \end{pmatrix} \begin{pmatrix} t^{-1} & 0 \\ -1 & t \end{pmatrix} \equiv x_{\alpha} (t^{-1}) \pmod{I}\]
and the invariance properties of $\mathrm{ch}^{\psi}_{I}$ give 
\[ 
\mathrm{ch}^{\psi}_{I}(x_{-\alpha}(t) w_{\alpha}(1))=\psi(t^{-1}). 
\] 
Summing up over $t\in (\mathcal O/\mathfrak{p})^{\times}$ yields $-1$. This completes (3). 
(4) is trivial.  
%\begin{align*}
% & (e_{\mathbf Is\mathbf I}.f_k^{\mathrm{sgn}})\left( \begin{pmatrix} \pi^i & 0 \\ 0 & 1 \end{pmatrix} \begin{pmatrix} 0 & 1 \\ 1 & 0 \end{pmatrix} \right) \\
%= &  \int_{a \in \mathcal O/\pi \mathcal O} f_k^{\mathrm{sgn}}\left( \begin{pmatrix} \pi^i & 0 \\ 0 & 1 \end{pmatrix} \begin{pmatrix} 0 & 1 \\ 1 & 0 \end{pmatrix} \begin{pmatrix} 1 & a \\ 0 & 1 \end{pmatrix} \begin{pmatrix} 0 & 1 \\ 1 & 0 \end{pmatrix} \right) da \\
%=&  \int_{a \in \mathcal O/ \pi \mathcal O, a \neq 0} f_k^{\mathrm{sgn}}\left( \begin{pmatrix} \pi^i & 0 \\ 0 & 1 \end{pmatrix} \begin{pmatrix} 1 & a^{-1}\\ 0 & 1 \end{pmatrix}\begin{pmatrix} 0 & 1 \\ 1 & 0 \end{pmatrix} \begin{pmatrix} a & 1 \\ 0 & -a^{-1} \end{pmatrix} \right) da \\
%=&  \int_{a \in \mathcal O/ \pi \mathcal O, a\neq 0} \psi(a^{-1}) f_k^{\mathrm{sgn}}\left( \begin{pmatrix} \pi^i & 0 \\ 0 & 1 \end{pmatrix} \begin{pmatrix} 0 & 1 \\ 1 & 0 \end{pmatrix}  \right)  da 
%\end{align*}
\end{proof}

Let $\mathcal H_W$ be the finite subalgebra of $\mathcal H$ generated by $T_w$ for $w \in W$.  Let 
$\sgn$ denote the one-dimensional representation of $\mathcal H_W$ on $\mathbb C$ where $T_w$ acts by $(-1)^{l(w)}$. Let $\pi$ be a smooth representation of 
$G$, so $\pi^I$ is an $\mathcal H$-module. 
We have the following, tautological, Frobenius reciprocity
\[ 
\Hom_{\mathcal H}(\mathcal H \otimes_{\mathcal H_{W}} \mathrm{sgn}, \pi^I)=\Hom_{\mathcal H_{W}}(\sgn, \pi^I), 
\] 
where an element $A'\in \Hom_{\mathcal H_{W}}(\sgn, \pi^I)$ corresponds to $A\in \Hom_{\mathcal H}(\mathcal H \otimes_{\mathcal H_{W}} \mathrm{sgn}, \pi^I)$ defined by 
$A(h\otimes 1)= \pi(h)(A'(1))$, for all $h\in H$. 

\begin{corollary} \label{cor realize iwahoir fix vector}
\begin{enumerate}
\item $V^{I}$ is a free $\mathcal A$-module generated by $\mathrm{ch}_I^{\psi}$. 
\item  $V^{I}$ is isomorphic to $\mathcal H \otimes_{\mathcal H_{W}} \mathrm{sgn}$.
\end{enumerate}
\end{corollary}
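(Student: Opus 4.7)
The plan is to establish (1) directly from the preceding results on $V_{\bar U}$, and then obtain (2) as an immediate consequence of (1) combined with Bernstein's presentation of $\mathcal{H}$ and the sign-eigenvector property of $\mathrm{ch}_I^{\psi}$.

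For (1), I would chain together the three structural isomorphisms already available. By Theorem \ref{thm BorelMat} applied to $V$, the Jacquet functor induces an isomorphism $V^{I} \cong V_{\bar U}^{I_T}$ of $\mathcal{A} \cong \mathbb{C}[X]$-modules. By Proposition \ref{prop savin realize}, $V_{\bar U} \cong C_c^{\infty}(T)$ as $T$-modules, and taking $I_T$-invariants yields $C_c^{\infty}(T)^{I_T} = C_c(T/T(\mathcal{O})) \cong \mathbb{C}[X]$, a free rank-one $\mathbb{C}[X]$-module generated by $\mathrm{ch}_{I_T}$. Hence $V^I$ is a free $\mathcal{A}$-module of rank one, with generator the preimage of $\mathrm{ch}_{I_T}$ under the composition. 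By Lemma \ref{lem computations on ch}(4), this preimage is exactly $\mathrm{ch}_I^{\psi}$, proving (1).

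For (2), I would define a natural map
\[
\phi: \mathcal{H} \otimes_{\mathcal{H}_W} \mathrm{sgn} \longrightarrow V^I, \qquad h \otimes 1 \longmapsto h \cdot \mathrm{ch}_I^{\psi}.
\]
The well-definedness of $\phi$ follows from Lemma \ref{lem computations on ch}(3), which says precisely that $\mathrm{ch}_I^{\psi}$ spans a copy of $\mathrm{sgn}$ under the $\mathcal{H}_W$-action, so that the relations in the tensor product over $\mathcal{H}_W$ are respected. Then $\phi$ is $\mathcal{H}$-equivariant by construction.

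It remains to verify that $\phi$ is bijective. By Bernstein's decomposition $\mathcal{H} = \bigoplus_{w \in W} \theta_x \cdot \mathcal{H}_W$ (viewed as a free right $\mathcal{H}_W$-module via the $\theta_x$-basis on $\mathcal{A}$), the source $\mathcal{H} \otimes_{\mathcal{H}_W} \mathrm{sgn}$ is a free left $\mathcal{A}$-module of rank one, generated by $1 \otimes 1$. By part (1), the target $V^I$ is also a free $\mathcal{A}$-module of rank one, generated by $\mathrm{ch}_I^{\psi} = \phi(1 \otimes 1)$. Since $\phi$ is an $\mathcal{A}$-linear map between free $\mathcal{A}$-modules of rank one that sends generator to generator, it is an isomorphism. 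The only mildly subtle point is matching the two rank-one structures through $\phi$; once the Bernstein presentation and Lemma \ref{lem computations on ch} are in hand, there is no real obstacle.
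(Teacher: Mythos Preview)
Your proof is correct and follows essentially the same approach as the paper: for (1) you chain Theorem \ref{thm BorelMat}, Proposition \ref{prop savin realize}, and Lemma \ref{lem computations on ch}(4) exactly as the paper does in the ``discussion preceding the lemma''; for (2) you construct the map via Lemma \ref{lem computations on ch}(3) (the paper phrases this as Frobenius reciprocity) and conclude by comparing free rank-one $\mathcal{A}$-modules generator-to-generator, just as the paper does. One cosmetic slip: the decomposition you wrote as $\mathcal{H} = \bigoplus_{w \in W} \theta_x \cdot \mathcal{H}_W$ should have the direct sum indexed by $x \in X$, not $w \in W$.
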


\begin{proof}
(1) follows from  Lemma \ref{lem computations on ch} (4) and the discussion preceding the lemma. (2) By Lemma \ref{lem computations on ch} (3)  we have 
an element in $\Hom_{\mathcal H_{W}}(\sgn, \pi^I)$ given by $1\mapsto \mathrm{ch}_I^{\psi}$ which, by Frobenius reciprocity,
furnish a map from  $\mathcal H \otimes_{\mathcal H_{W}} \mathrm{sgn}$ to $V^I$ . 
 Now (2) follows from (1) since $\mathcal H \otimes_{\mathcal H_{W}} \mathrm{sgn}$ is a free $\mathcal A$-module generated by $1\otimes 1$. 
\end{proof}

Let 
\[ 
\mathbf S= \sum_{w\in W} (-1/q)^{l(w)} T_w \in \mathcal H_W. 
\] 
If $\pi$ is a smooth representation of $G$ then $S$, acting on $\pi$,  projects on the subspace of $\pi^I$ consisting of elements on which $T_w$ act by $(-1)^{l(w)}$ for all $w\in W$. 
Let $\mathbf S(\pi)$ denote that subspace. 

Let $\widetilde \pi$ be the smooth dual of $\pi$. If $\pi$ is generated by $\pi^I$, its Iwahori-fixed vectors, then so is $\widetilde \pi$. 
 We have canonical isomorphisms $\widetilde\pi^I\cong (\pi^*)^{I}\cong (\pi^I)^*$  where $*$ denotes the linear dual.  
 In particular,  $\mathbf S(\widetilde \pi)\cong \mathbf S(\pi)^*$.
The following  is a strengthening, to the category of smooth representations,  of a genericity criteria due to Barbasch-Moy \cite{BM} for representations 
generated by Iwahori-fixed vectors. 
 
\begin{corollary} \label{cor recover results}
 Let $\pi$ be a smooth representation of $G$ generated by $I$-fixed vectors. The canonical map $\mathbf S(\pi) \rightarrow \pi_{U,\psi}$ obtained by composing 
the inclusion of $\mathbf S(\pi)$ into $\pi$ and the projection of $\pi$ onto $\pi_{U,\psi}$ is a bijection. 
\end{corollary}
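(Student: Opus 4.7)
The plan is to unfold both sides of the map into Hecke-algebraic language using Corollary \ref{cor realize iwahoir fix vector}, and to reduce bijectivity to a computation on a single universal example. By Frobenius reciprocity for $\mathcal H_W\subset\mathcal H$ and Corollary \ref{cor realize iwahoir fix vector},
\[
\mathbf S(\pi)\;=\;\Hom_{\mathcal H_W}(\mathrm{sgn},\pi^I)\;=\;\Hom_{\mathcal H}(V^I,\pi^I),
\]
and under the Borel-Bernstein equivalence for smooth representations generated by their $I$-fixed vectors, the right-hand side identifies with $\Hom_G(V^{[I]},\pi)$, where $V^{[I]}\subseteq V$ is the sub-$G$-representation generated by $V^I$. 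In this guise the natural map of the statement becomes evaluation at the canonical generator, $\phi\mapsto\overline{\phi(\mathrm{ch}_I^\psi)}\in\pi_{U,\psi}$.

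Both functors $\pi\mapsto\mathbf S(\pi)$ and $\pi\mapsto\pi_{U,\psi}$, restricted to the subcategory $\mathcal R(G)^I$ of smooth representations generated by $I$-fixed vectors, are exact and commute with direct sums: the first because $(-)^I$ is exact and $\mathcal H_W$ is semisimple (so the $\mathrm{sgn}$-isotypic is a canonical direct summand), the second because the twisted Jacquet functor is exact. Since $\mathrm{ind}_I^G\mathbf 1$ is a projective generator of $\mathcal R(G)^I$ (by Frobenius reciprocity $\Hom_G(\mathrm{ind}_I^G\mathbf 1,-)=(-)^I$), a standard five-lemma argument applied to a two-term projective presentation reduces the claim to verifying bijectivity in the single case $\pi=\mathrm{ind}_I^G\mathbf 1$. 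For this test case $\pi^I=\mathcal H$, and Bernstein's decomposition $\mathcal H=\bigoplus_{w\in W}\mathcal A\, T_w$ together with Lemma \ref{lem computations on ch}(3) identifies $\mathbf S(\pi)=\mathbf S\cdot\mathcal H$ with $\mathbb C$-basis $\{\mathbf S\theta_x\}_{x\in X}$.

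To compute $(\mathrm{ind}_I^G\mathbf 1)_{U,\psi}$ I would use a Mackey/geometric-lemma analysis via the Bruhat-Iwahori decomposition $G=UW_{\mathrm{ex}}I$: a double coset $U\dot wI$ contributes a one-dimensional summand precisely when $\psi|_{U\cap \dot wI\dot w^{-1}}=1$, and the surviving cosets are naturally indexed by $X$ in a way that lets Lemma \ref{lem computations on ch}(4) match the image of $\mathbf S\theta_x$ with the corresponding basis vector. The main obstacle is precisely this Mackey analysis: tracking which Bruhat-Iwahori orbits survive the $(U,\psi)$-triviality condition and matching the resulting basis to $\{\mathbf S\theta_x\}_{x\in X}$. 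The required combinatorics mirror the open-cell computation of $V^I$ already carried out in Section \ref{s induced rep wc}, so the tools are at hand; once this matching is done, bijectivity of the natural map for general $\pi\in\mathcal R(G)^I$ follows automatically from the exactness reduction.
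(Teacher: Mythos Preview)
Your reduction to the projective generator $\ind_I^G\mathbf 1$ is sound, but the argument is incomplete precisely where you say it is, and that gap is not small. Two things remain: you must carry out the Mackey analysis to show that the $w\in W_{\mathrm{ex}}$ with $\psi$ trivial on $U\cap wIw^{-1}$ are exactly parametrized by $X$, and then you must verify that the \emph{canonical} map $\mathbf S(\ind_I^G\mathbf 1)\to(\ind_I^G\mathbf 1)_{U,\psi}$ carries $\mathbf S\theta_x$ to the corresponding basis vector, not merely that both sides have the same size. The tools of Section~\ref{s induced rep wc} are relevant but do not transfer verbatim: that section computes $(\ind_U^G\psi)^I$, whereas dualizing your test object yields $(\Ind_U^G\psi)^I$, the full induction.

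The paper's proof takes a different and much shorter route: it dualizes at the outset and shows $(\pi_{U,\psi})^*\to\mathbf S(\pi)^*$ is bijective via
\[
(\pi_{U,\psi})^*\cong\Hom_G(\pi,\Ind_U^G\psi)\cong\Hom_G(\ind_U^G\widetilde\psi,\widetilde\pi)\cong\Hom_{\mathcal H}(\mathcal H\otimes_{\mathcal H_W}\sgn,\widetilde\pi^I)\cong\Hom_{\mathcal H_W}(\sgn,\widetilde\pi^I)\cong\mathbf S(\pi)^*,
\]
using Frobenius reciprocity twice, contragredient duality, Corollary~\ref{cor realize iwahoir fix vector} applied to $\widetilde\psi$, and $\widetilde\pi^I\cong(\pi^I)^*$. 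A short unwinding then confirms that this composite is exactly the dual of the canonical map. This handles all $\pi$ at once, with no Mackey decomposition and no test object; your strategy could be completed, but it is substantially longer and essentially re-derives on $\ind_I^G\mathbf 1$ what Corollary~\ref{cor realize iwahoir fix vector} already delivers for free once you pass to the contragredient.
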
 
\begin{proof} It suffices to prove that the dual map $(\pi_{U,\psi})^* \rightarrow \mathbf S(\pi)^*$ is a bijection. 
We have the following natural isomorphisms:
\begin{align*}
(\pi_{U,\psi})^*&\cong  \mathrm{Hom}_{G}(\pi, \mathrm{Ind}_{U}^{G} \psi) \\
 & \cong \mathrm{Hom}_{G}(\mathrm{ind}_{U}^{G} \widetilde\psi, \widetilde{\pi}) \quad \mbox{ (taking dual) } \\
                &  \cong \mathrm{Hom}_{\mathcal H}(\mathcal H \otimes_{\mathcal H_{W}} \mathrm{sgn}, \widetilde{\pi}^{I}) \quad \mbox{ (by Corollary \ref{cor realize iwahoir fix vector})}  \\
								&  \cong \mathrm{Hom}_{\mathcal H_{W}} (\mathrm{sgn}, \widetilde{\pi}^{I})  \quad \mbox{ (by Frobenius reciprocity) } \\
								& \cong \mathbf S(\pi)^*. 
\end{align*}
It remains to show that this sequence of isomorphisms realizes the dual map $(\pi_{U,\psi})^* \rightarrow \mathbf S(\pi)^*$. To that end, let 
 $\ell \in (\pi_{U,\psi})^*$. For every $v\in \pi$, let $f_v(g) = \ell(\pi(g) v) \in \Ind_U^G\psi$. Note that $f_v(1)=\ell(v)$. 
So $\ell$ defines $A\in \Hom_G(\pi, \Ind_U^G\psi)$ by $A(v)=f_v$,  for all $v\in \pi$, and this realizes the first  isomorphism above. 
The map $A$  defines $\widetilde A \in \Hom_G(\ind_U^G\widetilde \psi,\widetilde\pi)$ where, 
for every $f\in \ind_U^G\widetilde \psi$,  $\widetilde A(f)$ is an element in $\widetilde \pi$ given by 
\[ 
\widetilde A(f)(v) = \int_{U\backslash G} f\cdot f_v ~dg
\] 
for all $v\in \pi$. This realizes the second isomorphism. The third isomorphism is given by the identification of $\ind_U^G(\widetilde\psi)^I$ and 
 $\mathcal H_n\otimes_{\mathcal H_W} \mathbb C$ where $\mathrm{ch}_{I}^{\widetilde\psi}$  corresponds to $1\otimes 1$. The fourth isomorphism gives 
an element in $ \Hom_{\mathcal H_W} (\sgn, \widetilde\pi^I)$ defined by 
 $1\mapsto \widetilde A(\mathrm{ch}_{I}^{\widetilde{\psi}})$.  
  Thus, starting from $\ell\in (\pi_{U,\psi})^*$ we have arrived to $\widetilde A(\mathrm{ch}_{I}^{\widetilde{\psi}})\in \mathbf S(\pi)^*$ given by 
\[ 
\widetilde A(\mathrm{ch}_{I}^{\widetilde{\psi}})(v) =\int_{U\backslash G} \mathrm{ch}_{I}^{\widetilde{\psi}}\cdot f_v ~dg, 
\] 
for all $v\in \mathbf S(\pi)$. Since the measure on $U\backslash G$ is fixed so that $U\cap I\backslash I$ has volume 1, the integral is equal to 
$f_v(1)$ and this is equal to $\ell(v)$, as desired. 

\end{proof}

\section{Bernstein-Zelevinsky derivatives for affine Hecke algebras} \label{s BZ afa}

In this section, we specify to $GL(n,F)$. Set $G_n=GL(n,F)$. Let $U_n$ be the unipotent subgroup of $G_n$ consisting of upper triangular matrices and let $\bar U_n$ be the opposite unipotent subgroup of $G_n$ consisting of lower triangular matrices. Let $D_n$ be the subgroup of diagonal matrices. 
The group of co-character and character lattices can be naturally identified with $X=\mathbb Z^n$.   The choice of $U_n$ determines the set of positive roots. Under 
these identifications the half-sum of all roots is $\rho=((n-1)/2, \ldots, (1-n)/2)$. 
Let $S_n$ be the group of all permutations matrices in $G_n$. 
 Let $I_n$ be the Iwahori subgroup determined from the Borel subgroup $D_n\bar U_n$ and 
 let $\mathcal H_n=C_c(I_n \backslash G_n / I_n)$ (see notations in Section \ref{ss IHA}). Inside 
$\mathcal H_n$ we have a finite dimensional subalgebra $\mathcal H_{S_n}$ consisting of functions supported on $GL(n,\mathcal O)$. Let $T_w$ be the characteristic 
function of $I_n w I_n$. Then $\mathcal H_{S_n}$ is spanned by $T_w$ for $w\in S_n$. 
Let $x=(m_1, \ldots , m_n)\in X$ such that $m_1 \geq \ldots \geq m_n$ i.e. $x$ is dominant.  Let $\dot{x}$ be the diagonal matrices whose diagonal entries are 
$\varpi^{m_1},  \ldots, \varpi^{m_n}$.
Let 
\[ 
\theta_x= q^{-\langle x, \rho\rangle}  \mathrm{ch}_{I_n \dot{x} I_n}. 
\] 
Let $\mathcal A_n$ be the commutative subalgebra in $\mathcal H_n$ generated by $\theta_x$ and their inverses, for $x$ dominant. 
 It is isomorphic to the group algebra $\mathbb C[X]$.  The algebra $\mathcal H_n$ is generated by $\mathcal H_{S_n}$ and $\mathcal A_n$ 
 modulo Bernstein's relations. 

\subsection{Jacquet functor}

We fix $i$ for the rest of this section. 
Let $P=MN$ be a parabolic subgroup containing $D_nU_n$ where $N$ is the unipotent subgroup, and the Levi subgroup 
$M\cong G_{n-i} \times G_i$ sitting in $G_n$ via the embedding
\[  (g_{n-i}, g_i)  \mapsto \begin{pmatrix} g_{n-i} & 0 \\ 0 & g_{i} \end{pmatrix}.
\]
Let $I_M=I_n \cap M$. 
 Let $\mathcal H_M= C_c(I_M\backslash M /I_M)$ be the convolution algebra of compactly supported $I_M$-bi-invariant functions on $M$. 
For every $w\in S_{n-i}\times S_i$   let $T_w^M\in \mathcal H_M$ be the characteristic function of $I_M w I_M$ 
Let $\rho_M$ be the half-sum of positive roots in $M$. Let $x\in X$ be dominant, and set 
\[ 
\theta_x^M= q^{-\langle x, \rho\rangle_M}  \mathrm{ch}_{I_M \dot{x} I_M}. 
\] 
Let $\mathcal A_M$ be a commutative subalgebra in $\mathcal H_M$ generated by $\theta_x$ and their inverses, for $x$ dominant. The following is a consequence of 
Bernstein's relations for $\mathcal H_M$ and $\mathcal H_n$. 

\begin{theorem} The map  $i_M (T_w^M)= T_w$, for $w\in S_{n-i}\times S_i$, and $i_M(\theta_x^M)=\theta_x$, for $x\in X$, defines an injective homomorphism of 
$\mathcal H_M$ and $\mathcal H_n$ 
\end{theorem}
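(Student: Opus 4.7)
The plan is to use Bernstein's presentation of both Hecke algebras and verify that the defining relations of $\mathcal{H}_M$ are sent to valid relations in $\mathcal{H}_n$. Recall that $\mathcal{H}_M$ is generated by $\mathcal{H}_{S_{n-i}\times S_i}$ (with basis $T_w^M$) together with $\mathcal{A}_M$, subject to (i) the quadratic and braid relations inside $\mathcal{H}_{S_{n-i}\times S_i}$, (ii) the abelian group algebra relations for $\mathcal{A}_M \cong \mathbb{C}[X]$, and (iii) the Bernstein cross-relations between simple reflections and lattice elements. Analogously for $\mathcal{H}_n$. So one only needs to show: the three families of relations in $\mathcal{H}_M$ are preserved under $i_M$, and then argue injectivity using the Bernstein basis.

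First I would check the finite Hecke algebra relations. Since $S_{n-i} \times S_i$ is a standard parabolic subgroup of $S_n$, the restriction to $S_{n-i} \times S_i$ of the length function on $S_n$ equals the intrinsic length function on $S_{n-i} \times S_i$, so a reduced expression in the smaller Weyl group remains reduced in $S_n$. Consequently, the quadratic relations $(T_s - q)(T_s + 1) = 0$ and the braid relations among the generators $T_s$ with $s \in S_{n-i} \times S_i$ hold identically in $\mathcal{H}_n$, matching those in $\mathcal{H}_M$. For the commutative part, the relations $\theta_x^M \theta_y^M = \theta_{x+y}^M$ follow from $\mathcal{A}_M \cong \mathbb{C}[X]$; the target relations $\theta_x \theta_y = \theta_{x+y}$ hold for the same reason in $\mathcal{A}_n \cong \mathbb{C}[X]$, so the subalgebra map $\mathbb{C}[X] \to \mathcal{A}_n$ is well defined.

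The principal step is the Bernstein cross-relation. For each simple reflection $s = s_\alpha$ with $\alpha$ a simple root of $M$, Bernstein's relation in $\mathcal{H}_M$ reads
\[
\theta_x^M T_s^M - T_s^M \theta_{s(x)}^M \;=\; (q-1)\,\frac{\theta_x^M - \theta_{s(x)}^M}{1 - \theta_{-\alpha}^M},
\]
and the analogous identity holds in $\mathcal{H}_n$ with $\alpha$ also a simple root of $G_n$. Because $\alpha$ and $s$ belong simultaneously to both root data, the right-hand sides of these relations have the same combinatorial shape, and after checking that the normalization $\theta_x^M = q^{-\langle x,\rho\rangle_M}\mathrm{ch}_{I_M \dot x I_M}$ matches the one arising from Bernstein's formula $\theta_x = q^{-(l(y)-l(z))/2} T_y T_z^{-1}$ (with $x = y - z$, $y,z$ dominant in $X$), the map respects the cross-relations on the nose. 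The verification that the two normalizations agree is where I expect the main bookkeeping: one must track that the length $l_M$ of a dominant cocharacter $x$ inside the extended affine Weyl group of $M$ satisfies $l_M(x) = 2\langle x,\rho_M\rangle$, parallel to $l(x) = 2\langle x,\rho\rangle$ in $G_n$, so the different powers of $q$ in the two normalizations disappear when one rewrites $\theta_x^M$ in terms of $T$'s and matches with $\theta_x$.

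Finally, injectivity. Bernstein's PBW-type basis gives $\mathcal{H}_M = \bigoplus_{x \in X,\, w \in S_{n-i}\times S_i} \mathbb{C}\, \theta_x^M T_w^M$, and the image $\bigoplus \mathbb{C}\, \theta_x T_w$ is a subspace of the Bernstein basis of $\mathcal{H}_n$, hence linearly independent. Therefore $i_M$ is injective, completing the proof.
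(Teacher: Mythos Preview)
Your proposal is correct and follows exactly the approach the paper indicates: the paper does not give a detailed proof but simply states that the theorem ``is a consequence of Bernstein's relations for $\mathcal H_M$ and $\mathcal H_n$,'' which is precisely what you verify. Your outline---checking the finite Hecke relations (using that $S_{n-i}\times S_i$ is parabolic in $S_n$ so lengths agree), the lattice relations, the Bernstein cross-relations (using that simple roots of $M$ are simple roots of $G_n$), and injectivity via the PBW basis---is the standard and expected argument.
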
 

In particular, any $\mathcal H_n$-module $\sigma$ can be viewed as an $\mathcal H_M$-module by precomposing by $i_M$. 
 The resulting $\mathcal H_M$-module will be denoted by 
$\mathrm{res}^{\mathcal H_n}_{\mathcal H_{M}}(\sigma)$.

\begin{proposition} \label{prop Jacquet functor}
Let $\pi$ be a smooth representation of $G$. The canonical isomorphism of linear spaces $p_N : \pi^{I_n} \rightarrow (\pi_N)^{I_M}$ 
gives a canonical isomorphism of $\mathcal H_M$-modules 
\[  
\mathrm{res}^{\mathcal H_n}_{\mathcal H_{M}}(\pi^{I_n})\cong  (\pi_N)^{I_M}. 
\]
\end{proposition}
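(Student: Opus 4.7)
The plan is to first reduce to the known linear bijection $p_N:\pi^{I_n}\to(\pi_N)^{I_M}$ (a Levi-parabolic version of Theorem \ref{thm BorelMat}), and then verify $\mathcal H_M$-equivariance by checking compatibility on generators. The bijectivity of $p_N$ as a map of vector spaces rests on the Iwahori decomposition $I_n = (I_n \cap N)\cdot I_M\cdot (I_n \cap \bar N)$: surjectivity follows by averaging a preimage over $I_n \cap N$, and injectivity follows from the standard contraction argument using a strongly dominant central element of $M$ that shrinks any prescribed compact subset of $N$ into $I_n \cap N$. This is Casselman's original argument applied to $M$ and $N$ in place of $G$ and $\bar U$, and it goes through independently of whether the Iwahori was built from a Borel compatible with $N$ or with $\bar N$.

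Granting the bijection, $\mathcal H_M$-equivariance must be checked on a set of algebra generators. By the theorem immediately preceding the proposition, $\mathcal H_M$ is generated by the $T_w^M$ for $w\in S_{n-i}\times S_i$ together with the $\theta_x^M$ for $x\in X$ dominant (inverses being automatic). For the $T_w^M$ case, since $w\in M$ normalizes both $N$ and $\bar N$, the Iwahori decomposition refines to $I_n w I_n = (I_n\cap N)\cdot I_M w I_M\cdot(I_n\cap\bar N)$. Integrating $\pi(g)v$ over this set and projecting to $\pi_N$, the $(I_n\cap N)$-factor becomes trivial (since $N$ acts trivially on the Jacquet module) and the $(I_n\cap\bar N)$-factor integrates trivially against the $I_n$-fixed $v$; what remains is $\mathrm{vol}(I_n\cap N)\cdot\mathrm{vol}(I_n\cap\bar N)\cdot(T_w^M\cdot p_N(v))$, and the measure normalization $\mathrm{vol}(I_n)=1=\mathrm{vol}(I_n\cap N)\cdot\mathrm{vol}(I_M)\cdot\mathrm{vol}(I_n\cap\bar N)$ forces the prefactor to be $1$ (with $\mathrm{vol}(I_M)=1$ for the Levi Hecke algebra).

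For $\theta_x^M$ with $x$ dominant, the dominance of $x$ yields the contractions $\dot x(I_n\cap N)\dot x^{-1}\subseteq I_n\cap N$ and $\dot x^{-1}(I_n\cap\bar N)\dot x\subseteq I_n\cap\bar N$. These give a product decomposition $I_n \dot x I_n = (I_n\cap N)\cdot\dot x I_M\cdot(I_n\cap\bar N)$, and convolution by $\mathrm{ch}_{I_n\dot x I_n}$ on $v\in\pi^{I_n}$ descends under $p_N$ to a scalar multiple of convolution by $\mathrm{ch}_{I_M\dot x I_M}$ on $p_N(v)$. The scalar arises as the index $[I_n\cap\bar N:\dot x^{-1}(I_n\cap\bar N)\dot x] = q^{2\langle x,\rho_N\rangle}$, where $\rho_N=\rho-\rho_M$, and it combines with the $\delta_P^{-1/2}$ twist from the normalized Jacquet functor and with the discrepancy $q^{-\langle x,\rho\rangle}$ versus $q^{-\langle x,\rho\rangle_M}$ between the Bernstein normalizations of $\theta_x$ and $\theta_x^M$ so that everything cancels to give $\theta_x\cdot v\mapsto \theta_x^M\cdot p_N(v)$. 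The main obstacle is precisely this normalization bookkeeping; however, this is the very relation for which Bernstein's $\theta_x$'s were designed, and it is the same principle underlying Theorem \ref{thm BorelMat}.
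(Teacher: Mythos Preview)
Your proposal is correct and follows essentially the same approach as the paper. The paper's proof is a single sentence asserting that one checks $p_N \circ T_w = T_w^M \circ p_N$ for $w\in S_{n-i}\times S_i$ and $p_N \circ \theta_x = \theta_x^M \circ p_N$ for dominant $x$ by explicit computation; you have simply carried out that computation in detail, including the Iwahori factorizations and the normalization bookkeeping.
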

\begin{proof} 
This is proved by checking, by an explicit computation, that  $p_N \circ T_w= T_w^M \circ p_N$, for  $w\in S_{n-i}\times S_i$,  
and $p_N \circ \theta_x = \theta_x^M \circ p_N$, for dominant $x\in X$. 
\end{proof}

\subsection{Bernstein-Zelevinsky derivatives} \label{ss ZD}
We continue with the same setup. 
Let $U_{i}$ be the subgroup of $M$ consisting of matrices of the form
\[ \begin{pmatrix}  I_{n-i} & 0 \\ 0 & u \end{pmatrix} ,\]
where $u$ is a strictly upper-triangular matrix in $G_i$.  The character $\overline\psi$ of conductor $\mathfrak p$ defines a Whittaker character $\psi$ of $U_i$ 
\[ 
\psi (u) =\sum_{j=n-i+1}^{n-1}  \overline\psi(u_{j,j+1}) 
\] 
where $u_{j,j+1}$ refers to the matrix entries. Let $\sigma$ be a smooth $M$-module. Let $\sigma_{U_i, \psi}$ be
 the space of  $\psi$-twisted $U_i$-coinvaraints. It is naturally a $G_{n-i}$-module. 
If $\pi$ is a smooth $G$-module, the $i$-th Bernstein-Zelevinski derivative of $\pi$ is defined by 
\begin{align} \label{eqn BZ derivative}
 \pi^{(i)} = (\pi_N) _{U_i, \psi}
\end{align}
Thus the $i$-th Bernstein-Zelevinski derivative  is a functor from the category of smooth $G_n$-modules to the category of smooth $G_{n-i}$-modules.

%where 
%\begin{enumerate}
%\item $\Phi^+(\pi) = \mathrm{ind}_{P_{r-1}U_r}^{P_r}(\delta_{P_r}^{\frac{1}{2}} \pi \otimes \psi) $
%\item $\Phi^-(\pi) =  r_{U_r, \psi} (\pi)$      
%\item $\Psi^+(\pi) = \mathrm{ind}_{P_{r-1}U_r}^{GL_r}(\delta_{P_r}^{\frac{1}{2}} \pi \otimes 1)=\delta_{P_r}^{\frac{1}{2}} \pi \otimes 1$ 
%\item $\Psi^-(\pi) = r_{U_r, 1} (\pi)$
%\end{enumerate}
\subsection{Bernstein-Zelevinsky derivative for $\mathcal H_n$} \label{ss bz der hn}
Note that we have a canonical isomorphism $\mathcal H_{n-i}\otimes \mathcal H_i \cong \mathcal H_M$ of the spaces of functions on $G_{n-i} \times G_i \cong M$. 
Composing with the injection $i_M : \mathcal H_M \rightarrow \mathcal H_n$, we have a homomorphism 
\[ 
m:  \mathcal H_{n-i}\otimes  \mathcal H_{i} \rightarrow  \mathcal H_{n}.
\] 
More concretely, we have the following formulae that will be of practical purpose later: 
$m(T_w\otimes 1)  \mapsto T_{\bar w}$, for $w\in S_{n-i}$, where $\bar w=w\times 1 \in S_{n-i} \times S_i$,  $m(\theta_x \otimes 1) \mapsto \theta_x$, 
where $x\in \mathbb Z^{n-i}$ is a viewed as an element of $\mathbb Z^n$ by 
adding $0$'s at the end, and $m(1\otimes T_w) \mapsto T_{\bar w}$, for 
$w\in S_{i}$, where $\bar w=1\times w \in S_{n-i} \times S_i$, and 
$m(1\otimes \theta_x ) \mapsto \theta_x$, where $x\in \mathbb Z^{i}$ is a viewed as an element of $\mathbb Z^n$ by 
adding $0$'s in front. 

  Abusing notation, we shall identify $\mathcal H_{n-i}$ and $m(\mathcal H_{n-i}\otimes 1)$.  Let $\mathbf S_i\in {\mathcal H}_{i}$ be the sign projector. 
 Let $\mathbf S_i^n=m(1\otimes  {\mathbf S_i})$.  
  Let $\sigma$ be an  $\mathcal H_{n}$-module.  The $i$-th Bernstein-Zelevinski derivative of $\sigma$ is the natural $\mathcal H_{n-i}$-module 
\[ 
\mathbf{BZ}_i(\sigma): =\mathbf S_i^n(\sigma). 
\]

Let $\pi$ be a smooth $G_n$-module, generated by $I_n$-fixed vectors. Then the smooth $M$-module $\pi_N$ is generated by $I_M$-fixed vectors. It is easy to see that 
$\pi_N$, viewed purely as a $G_{n-i}$-module,  is generated by its $I_{n-i}$-fixed vectors. Thus the $i$-th Bernstein-Zelevinski derivative $\pi^{(i)}$, being a quotient of 
$\pi_N$, is also generated by its $I_{n-i}$-fixed vectors. It follows that $\pi^{(i)}$ is determined by the corresponding $\mathcal H_{n-i}$-module $(\pi^{(i)})^{I_{n-i}}$. 
Now note that $(\pi^{(i)})^{I_{n-i}}$ is a quotient of $\pi^{I_{n-i}}$, while $\mathbf{BZ}_i(\pi^{I_n})$ is a submodule of $\pi^{I_{n-i}}$. Hence we have a canonical map 
$\mathbf{BZ}_i(\pi^{I_n}) \rightarrow (\pi^{(i)})^{I_{n-i}}$.

\begin{theorem} \label{thm bz aha}
Let $\pi$ be a smooth representation of $G_n$ generated by $I_n$-fixed vectors. 
The canonical  map $\mathbf{BZ}_i(\pi^{I_n}) \rightarrow (\pi^{(i)})^{I_{n-i}}$ 
is an isomorphism of $\mathcal H_{n-i}$-modules. 

\end{theorem}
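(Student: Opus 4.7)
The plan is to chain Proposition \ref{prop Jacquet functor} together with Corollary \ref{cor recover results}, the latter applied to a smooth $G_i$-module extracted from $\pi_N$. Writing $\mathcal H_M\cong \mathcal H_{n-i}\otimes \mathcal H_i$ and $I_M=I_{n-i}\times I_i$, Proposition \ref{prop Jacquet functor} produces an isomorphism of $\mathcal H_M$-modules $\pi^{I_n}\cong (\pi_N)^{I_M}$, and since $\mathbf S_i^n=m(1\otimes \mathbf S_i)$ acts only through the second tensor factor, this restricts to an isomorphism of $\mathcal H_{n-i}$-modules
\[
\mathbf{BZ}_i(\pi^{I_n})\;\cong\; \mathbf S_i\cdot (\pi_N)^{I_M},
\]
where the $\mathcal H_{n-i}$-action comes from the $\mathcal H_{n-i}\otimes 1$ subalgebra.

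Next, because $I_{n-i}\subset G_{n-i}$ is a compact open subgroup centralizing both $U_i$ and the character $\psi$, taking $I_{n-i}$-fixed vectors commutes with $(U_i,\psi)$-coinvariants, giving $(\pi^{(i)})^{I_{n-i}}\cong ((\pi_N)^{I_{n-i}})_{U_i,\psi}$. Setting $\tau=(\pi_N)^{I_{n-i}}$, a smooth $G_i$-module, we have $\tau^{I_i}=(\pi_N)^{I_M}$, so the theorem reduces to the natural identification $\mathbf S_i\cdot \tau^{I_i}\cong \tau_{U_i,\psi}$ provided by Corollary \ref{cor recover results} applied to the $G_i$-module $\tau$---provided that $\tau$ is generated by its $I_i$-fixed vectors.

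The verification of this generation property is the main technical step. I would argue that $\pi_N$ lies in the Iwahori-spherical Bernstein component of $M$: by hypothesis every irreducible subquotient of $\pi$ is Iwahori-spherical for $G_n$, and by the geometric lemma combined with exactness of the Jacquet functor, every irreducible subquotient of $\pi_N$ is a subquotient of $(\pi_0)_N$ for some Iwahori-spherical irreducible constituent $\pi_0$ of $\pi$, hence is Iwahori-spherical for $M$. Because $M\cong G_{n-i}\times G_i$ with $I_M=I_{n-i}\times I_i$ and irreducibles factor, Iwahori-sphericity for $M$ decomposes as Iwahori-sphericity for each factor separately; in particular $\pi_N$ viewed purely as a $G_i$-module lies in the Iwahori-spherical Bernstein component of $G_i$, and the $G_i$-submodule $\tau$ inherits this property, so $\tau=G_i\cdot \tau^{I_i}$ as required.

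Finally, the $\mathcal H_{n-i}$-equivariance of the assembled isomorphism is immediate from the fact that the $\mathcal H_{n-i}$-action on both sides is induced by the $G_{n-i}$-action on $\pi_N$, which commutes with every $G_i$-operation entering the construction; and a direct chase through the inclusion-then-projection description of the isomorphism of Corollary \ref{cor recover results} confirms that the composite agrees with the canonical map $\mathbf{BZ}_i(\pi^{I_n})\to (\pi^{(i)})^{I_{n-i}}$ described just before the statement.
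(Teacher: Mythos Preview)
Your proof is correct and follows the same approach as the paper: factor through Proposition~\ref{prop Jacquet functor} to pass to $\pi_N$, then apply Corollary~\ref{cor recover results} to the $G_i$-module $(\pi_N)^{I_{n-i}}$, the only task being to check this module is generated by its $I_i$-fixed vectors. The paper packages these two steps as Lemmas~\ref{lem nat is s} and~\ref{lem zel der}; the sole difference is that for the generation statement the paper argues directly from the fact that $\pi_N$ is generated by $I_M$-fixed vectors (an averaging over $I_{n-i}$ suffices), whereas you route through Bernstein-component language---your invocation of the geometric lemma and irreducible subquotients is heavier than necessary and tacitly leans on dévissage that is delicate for $\pi$ of infinite length, but the conclusion is correct since the Jacquet functor carries the Iwahori component of $G_n$ into that of $M$.
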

\begin{proof} 
The proof of this theorem will occupy the rest of this section.

\begin{lemma} \label{lem nat is s} Let $\sigma$ be a smooth $M$-module generated by its $I_M$-fixed vectors. Then 
the canonical  map $\mathbf{S}_i(\sigma)^{I_{n-i}} \rightarrow (\sigma_{U_i, \psi})^{I_{n-i}}$ 
is an isomorphism of $\mathcal H_{n-i}$-modules. 
\end{lemma} 
\begin{proof} The canonical map is a homomorphism of $\mathcal H_{n-i}$-modules, so it suffices to check that it is an isomorphism of vector spaces. 
Note that $\sigma^{I_{n-i}}$ is generated by its $I_i$-fixed vectors as a $G_i$-module. Hence  Corollary \ref{cor recover results},  applied to $G_i$, implies the lemma. 
\end{proof} 

We now need the following observation. Let $\sigma$ be a smooth $M$-module. Then $\mathcal H_{n-i}$ and $\mathcal H_i$ both act on $\sigma^{I_M}$. The 
resulting tensor product action of $\mathcal H_{n-i}\otimes \mathcal H_i$ on $\sigma^{I_M}$ and the action of $\mathcal H_M$ are compatible with respect to 
the canonical isomorphism $\mathcal H_{n-i}\otimes \mathcal H_i \cong \mathcal H_M$.   
Using this observation  and Proposition \ref{prop Jacquet functor} one easily checks the following lemma: 

\begin{lemma} \label{lem zel der} Let $\pi$ be a smooth $G_n$-module generated by its $I_n$-fixed vectors. The isomorphism $\pi^{I_n}\cong (\pi_N)^{I_M}$ induces 
an isomorphism $\mathbf{S}_i^n(\pi^{I_n})\cong \mathbf{S}_i(\pi_N)^{I_{n-i}}$ 
of $\mathcal H_{n-i}$-modules.
\end{lemma}
\noindent 
The theorem is a simple combination of the two lemmas, using $\sigma=\pi_N$, in the first.  
\end{proof}

\section{Bernstein-Zelevinsky derivatives and Lusztig reductions} \label{s bz gha}

\subsection{Affine Hecke algebras} \label{def aha}

We shall state the definition of an affine Hecke algebra in a greater generality which will be needed in the following subsections.

Let $(X,R, X^{\vee}, R^{\vee})$ be a root datum where $R$ is a reduced root system and $X$  a $\mathbb{Z}$-lattice containing $R$. 
Let $W$ be the Weyl group of $R$. Let $Q\subseteq X$ be the root lattice and let $W_{\mathrm{aff}}=Q \rtimes W$ be the affine Weyl algebra. Fix a set of simple roots $\Pi$. The choice of $\Pi$ determines a set $S_{\mathrm{aff}}$ of simple affine reflections. 
 Let $W_{\mathrm{ex}}$ be the semidirect product $X \rtimes W$ (extended affine Weyl group).  Let $Y\subseteq X$ be the sub lattice perpendicular to $R^{\vee}$. Then 
 $W_{\mathrm{ex}}/Y$ acts on a Coxeter complex and this action defines a length function  
 $l: W_{\mathrm{aff}} \rightarrow \mathbb{Z}$ such that $l(s)=1$ for all $s\in S_{\mathrm{aff}}$. 
  %Let $\mathbf q: W\rightarrow \mathbb{R}_{>0}$ be a parameter function such that $\mathbf q(w)=1$ if $l(w)=0$ and $\mathbf q(w_1w_2)=\mathbf q(w_1)\mathbf q(w_2)$ if $l(w_1w_2)=l(w_1)+l(w_2)$. 

\begin{definition} \label{def affine heck alg}
 The affine Hecke algebra $\mathcal H:=\mathcal H(X,  R, \Pi, q)$ associated to the datum is defined to be a complex associative algebra generated by the elements $\left\{ T_w : w\in W_{\mathrm{ex}} \right\}$ subject to the relations 
\begin{enumerate}
\item $T_{w}T_{w'}=T_{ww'}$ if $l(ww')=l(w)+l(w')$,
\item $(T_s+1)(T_s-q)=0$ for $s \in S_{\mathrm{aff}}$.
\end{enumerate}
%We shall simply write $q$ for the constant function $\mathbf q \equiv q$, where $q$ is defined as in Section \ref{s induced rep wc} and is positive real.
\end{definition}

Denote by $\mathcal H_W$ the finite subalgebra of $\mathcal H$ generated by $T_w$ ($w \in W$). 
The algebra $\mathcal H$ has a large commutative subalgebra $\mathcal A\cong \mathbb C[X]$, which depends on the choice of 
simple roots $\Pi$. We have an isomorphism of vector spaces $\mathcal H \cong \mathcal A\otimes_{\mathbb C} \mathcal H_W$. 
 Let $\mathbb{T}=\mathrm{Hom}(X, \mathbb{C}^{\times})$. 
The center $\mathcal Z$ of $\mathcal H$ is isomorphic  to $\mathbb C[X]^W$. Hence 
central characters of $\mathcal H$ are parameterized by $W$-orbits in $\mathbb{T}$. We shall denote by $Wt$ the $W$-orbit of $t \in \mathbb{T}$. 
Let $\mathcal J_{Wt}$ be the corresponding maximal ideal in $\mathcal Z$. 
For a finite-dimensional $\mathcal H$-module $\chi$, denote $\chi_{[Wt]}$ to be the subspace of $\chi$ annihilated by a power of 
 $\mathcal J_{Wt}$. Then 
\[  \chi \cong \bigoplus_{Wt \in \mathbb{T}/W} \chi_{[Wt]} .\]

\smallskip 
Let $X_n=X_n^{\vee} = \bigoplus_{k=1}^n \mathbb{Z}\epsilon_k$ be a $\mathbb{Z}$-lattice. Set $\alpha_{kl}=\epsilon_k-\epsilon_l$ ($k \neq l$) and also set $\alpha_k=\alpha_{k,k+1}$ ($k=1, \ldots, n$). Let $R_n=R_n^{\vee}=\left\{ \epsilon_k-\epsilon_l: l\neq k\right\}$ be a root system of type $A_{n-1}$. Let $\Pi_n=\left\{ \epsilon_i-\epsilon_{i+1}: i=1,\ldots, n-1 \right\}$. 
The Iwahori-Hecke algebra $\mathcal H_n$  of $GL(n)$ (from Section \ref{s BZ afa}) is isomorphic to $\mathcal{H}(X_n, R_n, \Pi_n, q)$.

\subsection{Lusztig's first reduction theorem} \label{s first l red}

We shall use a variation of Lusztig's reduction in \cite[Section 2]{OS}  for the affine Hecke algebra $\mathcal H_n$  (also see \cite{BM}), proofs are from \cite[Section 8]{Lu}. 
Let $\mathbb{T}_n=\mathrm{Hom}(X_n, \mathbb{C}^{\times})$. Any $t\in \mathbb{T}_n$ is identified with an $n$-tuple $(z_1, \ldots ,z_n)$ of non-zero complex numbers where 
$z_i$ is the value of $t$ at $\epsilon_i$.   Let 
 $\mathbb{T}_r=\mathrm{Hom}(X_n, \mathbb{R}_{> 0})$ and $\mathbb{T}_{un}=\mathrm{Hom}(X_n, S^1)$.   
Any $t \in \mathbb T_n$ has a polar decomposition $t=vu$ where $v \in \mathbb{T}_r$ and $u \in\mathbb{T}_{un}$. 
 Write $x(u)$ for the value of $u$ at $x\in X_n$.  Hence $u=(z_1, \ldots ,z_m)$ where $z_k=\epsilon_k(u)$. 
  Without loss of generality we can permute the entries of 
$u$ such that, for a partition $\mathbf n=(n_1, \ldots , n_m)$  of $n$, $z_1=\ldots =z_{n_1} \neq z_{n_1+1} =\ldots $ etc. Let 
\[  
R_{\mathbf n}= \left\{ \alpha \in R_n: \alpha(u)=1  \right\}.  
\] 
It is a root subsystem of $R_n$ which, as the notation indicates, depends on the partition $\mathbf n$. It is isomorphic to the product  $R_{n_1}\times \ldots \times R_{n_m}$. 
Let $S_{\mathbf n}\cong S_{n_1} \times \ldots \times S_{n_m}$ be its Weyl group. 
 Let $\Pi_{\mathbf n}$ be the set of simple roots in $R_{\mathbf n}$ determined by $R^+_{\mathbf n}=R^+_n \cap R_{\mathbf n}$. 
 Let $\mathcal H_{\mathbf n}:=\mathcal H(X_n, R_{\mathbf n}, \Pi_{\mathbf n}, q)\cong \mathcal H_{n_1} \otimes \ldots \otimes \mathcal H_{n_m}$
  be the associated affine Hecke algebra (see Definition \ref{def affine heck alg}). 
 This is a Hecke algebra corresponding to the Levi subgroup $M=G_{n_1} \times \cdots \times G_{n_m}$.  
 Let $\mathcal Z_{\mathbf n}=\mathcal A_n^{S_{\mathbf n}}$ be the center of $\mathcal H_{\mathbf n}$. Let
  $\mathcal J_{ S_{\mathbf n}t}$ be an ideal in $\mathcal Z_{\mathbf n}$ corresponding to the central character $S_{\mathbf n} t$. 
 Let 
  $\sigma$ be a finite-dimensional ${\mathcal H}_{\mathbf n}$-module annihilated by a power of $\mathcal J_{S_{\mathbf n}t}$. Then 
  $i(\sigma)=\mathcal H_n \otimes_{\mathcal H_{\mathbf n}} \sigma$ is annihilated by a power of $\mathcal J_{S_nt}$. 
 
%The following result and proof are a variation of \cite[Sections 8.16 and 10.9]{Lu}. 
 \begin{theorem} \label{thm equ cat first red}
 The functor $i$ defines an equivalence  between the category of finite-dimensional ${\mathcal H}_{\mathbf n}$-modules annihilated by a power of $\mathcal J_{S_{\mathbf n}t}$ and the category of finite-dimensional $\mathcal H_n$-modules annihilated by a power of $\mathcal J_{S_nt}$. 
 \end{theorem}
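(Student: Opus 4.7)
The plan is to realize $i$ as a Morita equivalence after $\mathcal J$-adic completion. Let $\widehat{\mathcal H}_n$ denote $\widehat{\mathcal Z}_n \otimes_{\mathcal Z_n} \mathcal H_n$, with $\widehat{\mathcal Z}_n$ the completion of $\mathcal Z_n$ at $\mathcal J_{S_n t}$, and similarly define $\widehat{\mathcal H}_{\mathbf n}$ with respect to $\mathcal J_{S_{\mathbf n}t}$. The category of finite-dimensional $\mathcal H_n$-modules annihilated by a power of $\mathcal J_{S_n t}$ is equivalent to the category of finite-dimensional $\widehat{\mathcal H}_n$-modules (and similarly for $\mathcal H_{\mathbf n}$), so it suffices to build an equivalence between the completed categories, compatible with induction.

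In $\widehat{\mathcal H}_n$, the subalgebra $\widehat{\mathcal A}_n := \widehat{\mathcal Z}_n\otimes_{\mathcal Z_n}\mathcal A_n$ decomposes as a finite product of complete local rings indexed by the points of $S_n t$, yielding orthogonal idempotents $\{e_s\}_{s \in S_n t}$. Grouping them by $S_{\mathbf n}$-orbits produces idempotents $e_{[\mathcal O]} = \sum_{s \in \mathcal O} e_s$; set $e = e_{[S_{\mathbf n}t]}$. The central technical claim is the algebra isomorphism $e\widehat{\mathcal H}_n e \cong \widehat{\mathcal H}_{\mathbf n}$. To prove it, I would first observe that $S_{\mathbf n}$ stabilizes the unitary part $u$ of $t$ (since $S_{\mathbf n}$ is generated by reflections $s_\alpha$ with $\alpha(u)=1$), so every point of $S_{\mathbf n} t$ has unitary part $u$; hence for every $\alpha \notin R_{\mathbf n}$ one has $\alpha(s) \neq 1$ on the support of $e$, making $1 - \theta_{-\alpha}$ invertible in $\widehat{\mathcal A}_n e$. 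Using Bernstein's relation of the form $T_{s_\alpha}\theta_x = \theta_{s_\alpha(x)}T_{s_\alpha} + (q-1)(\theta_x - \theta_{s_\alpha(x)})/(1-\theta_{-\alpha})$, one deduces that for $\alpha \notin R_{\mathbf n}$ the element $eT_{s_\alpha}e$ is expressible purely in terms of $\widehat{\mathcal A}_n e$, whereas for $\alpha \in \Pi_{\mathbf n}$ the elements $T_{s_\alpha}$ commute with $e$ and together with $\widehat{\mathcal A}_n e$ satisfy exactly the defining relations of $\widehat{\mathcal H}_{\mathbf n}$.

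With $e\widehat{\mathcal H}_n e \cong \widehat{\mathcal H}_{\mathbf n}$ in hand, I would show that $\widehat{\mathcal H}_n e$ is a Morita progenerator as a right $\widehat{\mathcal H}_{\mathbf n}$-module. Choosing minimal length coset representatives $w_1,\ldots,w_r$ for $S_n/S_{\mathbf n}$ and using that $T_{w_i}$ maps the idempotent $e$ to $e_{[w_iS_{\mathbf n}t]}$ up to nilpotent Bernstein corrections, one obtains the decomposition $\widehat{\mathcal H}_n = \bigoplus_i T_{w_i}\widehat{\mathcal H}_{\mathbf n}$, which shows that $\widehat{\mathcal H}_n$ is faithfully flat over $\widehat{\mathcal H}_{\mathbf n}$ and that the Morita equivalence induced by $e$ coincides with the induction functor $\sigma \mapsto \widehat{\mathcal H}_n \otimes_{\widehat{\mathcal H}_{\mathbf n}} \sigma$, i.e.\ with $i$ after completion. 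Naturality of the completions in $\sigma$ yields the claimed equivalence on the uncompleted categories.

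The main obstacle is the identification $e\widehat{\mathcal H}_n e \cong \widehat{\mathcal H}_{\mathbf n}$: handling the Bernstein cross-relations in the completion requires exploiting the invertibility of $1-\theta_{-\alpha}$ for $\alpha \notin R_{\mathbf n}$, which is precisely the defining property of $R_{\mathbf n}$. Everything else is essentially bookkeeping once this Morita identification is established.
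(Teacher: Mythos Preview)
Your proposal is correct and follows essentially the same route as the paper: both pass to the $\mathcal J$-adic completions, decompose $\widehat{\mathcal A}_n$ into local pieces indexed by $S_n t$, take the idempotent $e=1_{\mathbf n}$ supported on the $S_{\mathbf n}$-orbit, and invoke the identification $e\,\widehat{\mathcal H}_n\,e\cong \widehat{\mathcal H}_{\mathbf n}$ (which the paper simply cites from Lusztig \cite[Section~8]{Lu}). The only cosmetic difference is packaging: the paper exhibits the inverse functor $r(\pi)=e\pi$ and checks $i\circ r\cong\mathrm{Id}$, $r\circ i\cong\mathrm{Id}$ using Lusztig's normalized intertwiners $\tau_w$ (which permute the idempotents $1_{t'}$ on the nose), whereas you phrase the same thing as a Morita equivalence and work directly with $T_{s_\alpha}$ and the Bernstein relation; the intertwiner approach avoids the ``nilpotent corrections'' you mention and makes the block-permutation structure cleaner, but the content is the same.
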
 
 \begin{proof} 
 Let $\widehat{\mathcal Z}_n$ (depending on $S_nt$) be the $\mathcal J_{S_nt}$-adic completion of $\mathcal Z_n$. Let $\widehat{\mathcal A}_n=\widehat{\mathcal Z}_n \otimes_{\mathcal Z_n} \mathcal A_n$. Let $\widehat{\mathcal H}_n=\widehat{\mathcal Z}_n \otimes_{\mathcal Z_n} \mathcal H_n$. 
 By the Chinese Remainder Theorem for a commutative ring, we have a decomposition
\[  \widehat{\mathcal A}_n = \bigoplus_{t' \in S_nt} \widehat{\mathcal A}_{t'} ,
\]
where $\widehat{\mathcal A}_{t'}$ is obtained by localizing $\widehat{\mathcal A}_n$ at $t'$. For any $t' \in S_nt$, let $1_{t'}$ be the unit element in $\widehat{\mathcal A}_{t'}$. We also regard $1_{t'}$ as an element in $\widehat{\mathcal A}_n$. 

We define a similar formal completion of $\mathcal H_{\mathbf n}$.  
  Let $\widehat{\mathcal Z}_{\mathbf n}$ be the $\mathcal J_{S_{\mathbf n}t}$-adic completion of $\mathcal Z_{\mathbf n}$. 
  Let $\widehat{\mathcal A}_{\mathbf n}=\widehat{\mathcal Z}_{\mathbf n} \otimes_{\mathcal Z_{\mathbf n}} \mathcal A_n$. Let $\widehat{\mathcal H}_{\mathbf n} = \widehat{\mathcal Z}_{\mathbf n} \otimes_{\mathcal Z_{\mathbf n}} \mathcal H_{\mathbf n}$. 
We have a decomposition
    \[  \widehat{\mathcal A}_{\mathbf n} = \bigoplus_{t' \in S_{\mathbf n} t} \widehat{\mathcal A}_{t'} .
\]
Let $1_{\mathbf n}=\sum_{t'\in S_{\mathbf n} t} 1_{t'}$. Note that  $1_{\mathbf n}$ is in $\widehat{\mathcal Z}_{\mathbf n}$  and  
$\widehat{\mathcal A}_{\mathbf n} = 1_{\mathbf n}\cdot \widehat{\mathcal A}_n=\widehat{\mathcal A}_n\cdot 1_{\mathbf n}$.

Let $\pi$ be an $\mathcal H_n$-module annihilated by a power of $\mathcal J_{S_nt}$. Then $\pi$ is naturally an $\widehat{\mathcal H}_n$-module, and
$\sigma =1_{\mathbf n} \cdot \pi$ an ${}_{\mathbf n}\widehat{\mathcal H}_{\mathbf n}$-module, 
where ${}_{\mathbf n}\widehat{\mathcal H}_{\mathbf n}=1_{\mathbf n} \cdot \widehat{\mathcal H}_n \cdot 1_{\mathbf n}$. 
Following Lusztig's arguments \cite[Section 8]{Lu}, $  {}_{\mathbf n} \widehat{\mathcal H}_{\mathbf n} \cong \widehat{\mathcal H}_{\mathbf n}$. 
Hence by identifying $ {}_{\mathbf n} \widehat{\mathcal H}_{\mathbf n} \cong \widehat{\mathcal H}_{\mathbf n}$, we have a functor $r(\pi)=1_{\mathbf n}\cdot \pi$ from the category of finite-dimensional $\mathcal H_n$-module annihilated by a power of $\mathcal J_{S_nt}$ to the category of finite-dimensional $\mathcal H_{\mathbf n}$-modules annihilated by a power of $\mathcal J_{S_{\mathbf n}t}$. Using the Frobenius reciprocity, intertwining operators (see \cite[Lemma 8.9(a)]{Lu}) and the fact that $1=\sum_{t'\in S_nt}1_{t'}$, we obtain a natural isomorphism from $i\circ r(\pi)$ to $\pi$. Using intertwining operators (see \cite[Lemma 8.9(a)]{Lu}) and the fact that
 $1_{\mathbf n}\cdot 1_{t'}=0$ if $t'\notin S_{\mathbf n}t$, we obtain $r \circ i \cong \mathrm{Id}$.  Hence $i$ defines an equivalence of categories. 
\end{proof}

\subsection{First reduction for the Bernstein-Zelevinsky derivatives} \label{s translate BZ}

We keep using notations from the previous subsection. In particular, we fixed $t=vu \in \mathbb{T}_n$, and 
 we have a canonical isomorphism $\mathcal H_{\mathbf n} \cong \mathcal H_{n_1} \otimes \ldots \otimes \mathcal H_{n_m}$, 
 where $\mathbf n=(n_1, \ldots, n_m)$ is a partition of $n$, arising from $u$. 
 
  %We simply write $\mathcal Q=\mathcal Q_{t}$. Recall from Section \ref{ss bz der hn} that we have to deal with $\mathrm{res}^{\mathcal H_n}_{\mathcal H_{n-i} \otimes \mathcal H_{i}}$. The best way to translate the restriction functor for the isomorphism in Theorem \ref{thm iso matrix hecke} is not to fix a single $u$ (see Lemma \ref{lem iso restrict} below). We choose representatives $u_1=u, u_2,\ldots, u_r$ for each $S_{n-i} \times S_{i}$-orbit of $S_nu$. For each $u_k$, we fix a choice of $t_k \in S_nt$ such that $t_k=v_ku_k$. 

Fix an integer $i \leq n$. For each $m$-tuple  $\mathbf i=(i_1,\ldots, i_m)$  of integers, such that $i_1+\ldots +i_m=i$ and $0\leq i_k\leq n_k$ ($k=1,\ldots, m$), define
another $m$-tuple $\mathbf n-\mathbf i=(n_1- i_1,\ldots, n_m-i_m)$.  Each pair $(n_k-i_k, i_k)$ gives rise to an embedding 
$\mathcal H_{n_k-i_k} \otimes \mathcal H_{i_k} \subseteq \mathcal H_{n_k}$, as in Section \ref{ss bz der hn}, and these combine to give an embedding 
\[ 
\mathcal H_{\mathbf n-\mathbf i} \otimes \mathcal H_{\mathbf i} \subseteq \mathcal H_{\mathbf n}
\] 
where $\mathcal H_{\mathbf i} \cong \mathcal H_{i_1} \otimes \ldots \otimes \mathcal H_{i_m}$ etc. (Note, if $i_k=0$, then the corresponding factor is the trivial 
algebra $\mathbb C$.) Abusing notation, we shall identify $\mathcal H_{\mathbf n-\mathbf i}$ with its image in $\mathcal H_{\mathbf n}$ via the map $h\mapsto h\otimes 1$. 
Let $\mathbf S_{\mathbf i}\in \mathcal H_{\mathbf i}$ be the sign projector in $\mathcal H_{\mathbf i}$, 
and let $\mathbf S_{\mathbf i}^{\mathbf n}$ be the image of $1\otimes \mathbf S_{\mathbf i}$ in $\mathcal H_{\mathbf n}$.  Let $\sigma$ be an $\mathcal H_{\mathbf n}$-module. 
Then $\mathbf S_{\mathbf i}^{\mathbf n}(\sigma)$ is naturally an $\mathcal H_{\mathbf n-\mathbf i}$-module. Thus we have a functor 
\[  \mathbf{BZ}^{\mathbf n}_{\mathbf i}(\sigma): =\mathbf S_{\mathbf i}^{\mathbf n}(\sigma) 
\]
 from the category of $\mathcal H_{\mathbf n}$-modules to the category of ${\mathcal H}_{\mathbf n-\mathbf i}$-modules. 

Observe that $\mathcal H_{\mathbf n-\mathbf i}$ is a Levi subalgebra of $\mathcal H_{n-i}$ and 
$\mathcal H_{\mathbf i}$ is a Levi subalgebra of $\mathcal H_{i}$
We are now ready to state the first reduction result.

%We define the functor $\mathbf{BZ}^u_{(i_1,\ldots, i_m)}=\mathbf S_{(i_1,\ldots, i_m)}^u \circ \mathrm{res}^{\mathcal H_u}_{\mathcal H_u^{(i_1,\ldots, i_m)}}$ from the category of $\mathcal H_u$-modules to the category of $\mathcal H^{(i_1,\ldots, i_m)}_{u}$-modules.

\begin{theorem} \label{thm first red bz}
Let $\pi$ be a finite-dimensional $\mathcal H_n$-module annihilated by a power of $\mathcal J_{S_nt}$. Let $\sigma$ be a finite-dimensional $\mathcal H_{\mathbf n}$-module annihilated by a power of $\mathcal J_{S_{\mathbf n}t}$ such that $\pi \cong i(\sigma)$ (see Theorem \ref{thm equ cat first red}). Then there is an isomorphism
\begin{align} \label{eqn bz first red} 
\mathbf{BZ}_i(\pi) \cong \bigoplus_{\mathbf i} \mathcal H_{n-i} \otimes_ {{\mathcal H}_{\mathbf n-\mathbf i}}    \mathbf{BZ}^{\mathbf n}_{\mathbf i} ( \sigma ) 
\end{align}
where the sum is taken over all $m$-tuple of integers $\mathbf i=(i_1,\ldots, i_m)$ satisfying $i_1+\ldots +i_m=i$ and $0\leq i_k\leq n_k$ ($k=1,\ldots, m$).
\end{theorem}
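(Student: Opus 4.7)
The plan is to decompose $\pi = \mathcal H_n \otimes_{\mathcal H_{\mathbf n}} \sigma$ as a left $(\mathcal H_{n-i} \otimes \mathcal H_i)$-module via a Mackey-type filtration indexed by the double cosets $(S_{n-i} \times S_i) \backslash S_n / S_{\mathbf n}$, and then apply the sign projector $\mathbf S_i^n$ to isolate $\mathbf{BZ}_i(\pi)$.

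I would first set up the Mackey filtration. The double cosets $(S_{n-i} \times S_i) \backslash S_n / S_{\mathbf n}$ are in canonical bijection with the tuples $\mathbf i$ in the statement: to $\mathbf i$ corresponds the unique minimal-length representative $w_{\mathbf i} \in S_n$ that places, for each $k$, the first $n_k - i_k$ entries of the $k$-th block of $\mathbf n$ (in order) among positions $1, \ldots, n-i$, and its last $i_k$ entries (in order) among positions $n-i+1, \ldots, n$. One then verifies that $(\mathcal H_{n-i} \otimes \mathcal H_i) \cap T_{w_{\mathbf i}} \mathcal H_{\mathbf n} T_{w_{\mathbf i}}^{-1}$ is the image of $\mathcal H_{\mathbf n - \mathbf i} \otimes \mathcal H_{\mathbf i}$ under a canonical algebra isomorphism. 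Filtering $\mathcal H_n$ as a $(\mathcal H_{n-i} \otimes \mathcal H_i, \mathcal H_{\mathbf n})$-bimodule by length-ordered double cosets and then tensoring with $\sigma$ over $\mathcal H_{\mathbf n}$ produces a finite filtration of $\pi$ whose successive subquotients are
\[
P_{\mathbf i} \;\cong\; (\mathcal H_{n-i} \otimes \mathcal H_i) \otimes_{\mathcal H_{\mathbf n - \mathbf i} \otimes \mathcal H_{\mathbf i}} \sigma^{w_{\mathbf i}},
\]
where $\sigma^{w_{\mathbf i}}$ denotes $\sigma$ with the $(\mathcal H_{\mathbf n - \mathbf i} \otimes \mathcal H_{\mathbf i})$-action obtained by pulling back through the isomorphism above.

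Next I would split the filtration into a direct sum by a central character argument. Writing the unitary part $u = (a_1, \ldots, a_1, \ldots, a_m, \ldots, a_m)$ with $a_k$ repeated $n_k$ times, the values $a_1, \ldots, a_m$ are pairwise distinct by the definition of $\mathbf n$. The $\mathcal A_n$-weights of $P_{\mathbf i}$ form a single $(S_{n-i} \times S_i)$-orbit in $\mathbb T_n$, determined by the multiplicities $(n_k - i_k)$ of the $a_k$'s in the first $n-i$ coordinates and $(i_k)$ in the last $i$ coordinates; these orbits are pairwise distinct as $\mathbf i$ varies. Hence the $P_{\mathbf i}$'s live in pairwise disjoint central-character blocks of $\mathcal H_{n-i} \otimes \mathcal H_i$ and the filtration splits, $\pi \cong \bigoplus_{\mathbf i} P_{\mathbf i}$.

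Applying $\mathbf S_i^n = 1 \otimes \mathbf S_i$ summand-wise and using
\[
P_{\mathbf i} \;\cong\; \mathcal H_{n-i} \otimes_{\mathcal H_{\mathbf n - \mathbf i}} \bigl( \mathcal H_i \otimes_{\mathcal H_{\mathbf i}} \sigma^{w_{\mathbf i}} \bigr),
\]
the projector acts on the inner factor. A Frobenius-reciprocity computation for the finite Hecke subalgebras (using that the sign character of $\mathcal H_{S_i}$ restricts to that of $\mathcal H_{S_{\mathbf i}}$) gives $\mathbf S_i(\mathcal H_i \otimes_{\mathcal H_{\mathbf i}} V) \cong \mathbf S_{\mathbf i}(V)$ as $\mathcal H_{\mathbf n - \mathbf i}$-modules for any $\mathcal H_{\mathbf i}$-module $V$. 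Specializing to $V = \sigma^{w_{\mathbf i}}$ and identifying $\mathbf S_{\mathbf i}(\sigma^{w_{\mathbf i}}) \cong \mathbf{BZ}^{\mathbf n}_{\mathbf i}(\sigma)$ under the $w_{\mathbf i}$-twist yields $\mathbf S_i^n(P_{\mathbf i}) \cong \mathcal H_{n-i} \otimes_{\mathcal H_{\mathbf n - \mathbf i}} \mathbf{BZ}^{\mathbf n}_{\mathbf i}(\sigma)$; summing over $\mathbf i$ gives (\ref{eqn bz first red}). The principal obstacle is Step 1: verifying, through Bernstein's relations, both the identification of the intersection above and that the length-ordered double-coset filtration produces the clean subquotients $P_{\mathbf i}$ without correction terms arising from commutations between $T_{w_{\mathbf i}}$ and the Bernstein elements of $\mathcal A_n$.
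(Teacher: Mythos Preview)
Your proposal is correct and follows essentially the same approach as the paper: a Mackey decomposition of $\mathrm{res}^{\mathcal H_n}_{\mathcal H_{n-i}\otimes\mathcal H_i}(\mathcal H_n\otimes_{\mathcal H_{\mathbf n}}\sigma)$, split into a direct sum by the distinctness of the $(\mathcal H_{n-i}\otimes\mathcal H_i)$-central characters of the subquotients, followed by a Frobenius-reciprocity computation of the sign-isotypic piece of each summand. The technical obstacle you flag in Step~1 is exactly what the paper disposes of by invoking the Mackey theorem for affine Hecke algebras as in \cite[Section 3.5]{Kl}, rather than rederiving the filtration from Bernstein's relations.
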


\begin{proof}
By using the Mackey theorem for affine Hecke algebras (see e.g. \cite[Section 3.5]{Kl} for a similar setting), we have
\begin{align}\label{eqn mackey thm} 
\mathrm{res}^{\mathcal H_n}_{\mathcal H_{n-i} \otimes \mathcal H_i} (\mathcal H_{n} \otimes_{\mathcal H_{\mathbf n}} \sigma) \cong 
\bigoplus_{\mathbf i} (\mathcal H_{n- i} \otimes \mathcal H_{ i}) 
\otimes_{(\mathcal H_{\mathbf n-\mathbf i} \otimes \mathcal H_{\mathbf i})} 
 \left(\mathrm{res}^{\mathcal H_{\mathbf n}}_{\mathcal H_{\mathbf n-\mathbf i} \otimes \mathcal H_{\mathbf i}}  \sigma \right)
\end{align}
where the sum is over $\mathbf i$ as in the statement of the theorem. 
We remark that the Mackey Theorem asserts that the composition factors of $ \mathrm{res}^{\mathcal H_n}_{\mathcal H_{n-i} \otimes \mathcal H_i} (\mathcal H_{n} \otimes_{\mathcal H_u} \sigma)$ are of the form in the left hand side of the above isomorphism. Those composition factors are indeed direct summands since the $\mathcal H_{n-i} \otimes \mathcal H_i$-central characters of those composition factors are distinct. 
Furthermore, using the Frobenius reciprocity, we have
\begin{align}\label{eqn sign first red} \mathbf S_i^n ((\mathcal H_{n-i} \otimes \mathcal H_{i}) \otimes_{(\mathcal H_{\mathbf n-\mathbf i} \otimes \mathcal H_{\mathbf i})} \sigma )\cong  \mathcal H_{n-i} \otimes_{{\mathcal H}_{\mathbf n-\mathbf i} } \mathbf S_{\mathbf i}^{\mathbf n}(\sigma). 
\end{align}
 Combining (\ref{eqn mackey thm}) and (\ref{eqn sign first red}), we obtain (\ref{eqn bz first red}).
\end{proof}

\begin{remark}
When $\pi \cong i(\sigma)$ is an irreducible $\mathcal H_{\mathbf n}$-module, then $\sigma \cong \sigma_1 \boxtimes \ldots \boxtimes \sigma_m$ for some irreducible $\mathcal H_{i_k}$-modules $\sigma_k$. In this case, 
\[\mathbf{BZ}^{\mathbf n}_{\mathbf i} ( \sigma ) \cong \mathbf{BZ}_{i_1}(\sigma_1) \boxtimes \ldots \boxtimes \mathbf{BZ}_{i_m}(\sigma_m) .
\]
From this viewpoint, Theorem \ref{thm first red bz} can be seen as a Leibniz rule. 
\end{remark}

\subsection{Graded affine Hecke algebras } 

We shall now need the affine graded Hecke algebra attached to the root datum $(X,R,X^{\vee},R^{\vee})$. 
   Let $V =X\otimes_{\mathbf Z}  \mathbb{C}$.
   %    We shall mainly work for $R=R_n, R_i, R_{n-i}, R_{u}$ (see Sections \ref{s first l red} and \ref{s translate BZ}) later.

\begin{definition} \label{def gah}
\cite[Section 4]{Lu}
The graded affine Hecke algebra $\mathbb{H}=\mathbb{H}(V, R, \Pi, \log q)$ is an associative algebra with an unit over $\mathbb{C}$ generated by the symbols $\left\{ t_w :w \in W \right\}$ and $\left\{ f_v: v \in V \right\}$ satisfying the following relations:
\begin{enumerate}
\item[(1)] The map $w  \mapsto t_w$ from $\mathbb{C}[W]=\bigoplus_{w\in W} \mathbb{C}w  \rightarrow \mathbb{H }$ is an algebra injection,
\item[(2)] The map $v \mapsto f_v$ from $S(V) \rightarrow \mathbb{H}$ is an algebra injection, where $S(V)$ is the polynomial ring for $V$, 
\item[(3)] writing $v$ for $f_v$ from now on, for $\alpha \in \Pi$ and $v \in V$,
\[    vt_{s_{\alpha}}-t_{s_{\alpha}}s_{\alpha}(v)=\log q \cdot \langle v, \alpha^{\vee} \rangle .\]
\end{enumerate}
\end{definition}

In particular, $\mathbb{H} \cong S(V) \otimes \mathbb{C}[W]$ as vector spaces.
We also set $\mathbb{A}=S(V)$, the graded algebra analogue of $\mathcal A$. Let $\mathbb{Z}=\mathbb{A}^{W}$ be the center of $\mathbb{H}$. 
Let $V^*=\Hom(X, \mathbb C)$. The central characters of irreducible representations are parameterized by $W$-orbits in $V^*$. If 
$\zeta\in V^*$, let $W\zeta$ denote the corresponding orbit an the central character. Let $\mathbb J_{W\zeta} \subset \mathbb Z$ be the corresponding 
maximal ideal.

\subsection{Lusztig's second reduction theorem} \label{ss lus srt}

 Let $\mathcal H=\mathcal H( X, R, \Pi,  q)$ be the affine Hecke algebra defined in Section \ref{def aha}, and 
 $\mathcal A\cong \mathbb C[X]$ the commutative sub algebra. Let $\theta_{x}\in \mathcal A$ correspond to $x\in X$.  
 Let $\mathcal Z\cong \mathbb C[X]^W$ be the center of $\mathcal H$. Let $\mathcal F$ be the quotient field of $\mathcal A$. Let $\mathcal H_F \cong \mathcal H_W \otimes \mathcal F$ with the algebraic structure naturally extending $\mathcal H$.

Following Lusztig \cite[Section 5]{Lu}, for $\alpha\in\Pi$, define $\tau_{s_{\alpha}} \in \mathcal H_F$ by 
\[  \tau_{s_{\alpha}}+1=(T_{s_{\alpha}}+1)\mathcal G(\alpha)^{-1} ,\]
where 
\[  \mathcal G(\alpha) = \frac{\theta_{\alpha}q-1}{\theta_{\alpha}-1} \in \mathcal F.
\]
It is shown in \cite[Section 5]{Lu} that the map from $W$ to the units of $\mathcal H_F$ defined by $s_{\alpha} \mapsto \tau_{s_{\alpha}}$ is an injective group homomorphism. 

On the graded Hecke algebra side, let $\mathbb{H}=\mathbb{H}(V, R, \Pi, \log q)$ be as in Definition \ref{def gah}. Let $\mathbb{F}$ be the quotient field of $\mathbb A$ and let $\mathbb{Z}$ be the center of $\mathbb{H}$. Let $\mathbb H_F \cong \mathbb H_W \otimes \mathbb F$ with the algebraic structure naturally extending $\mathbb H$. For $\alpha \in \Pi$, define $\overline{\tau}_{s_{\alpha}} \in \mathbb H_F$ by 
\[  \overline{\tau}_{s_{\alpha}}+1=(t_{s_{\alpha}}+1)g(\alpha)^{-1}, \]
where
\[  g(\alpha)= \frac{\alpha+\log q}{\alpha} \in \mathbb F . 
\]
As in the affine case, the map from $W$ to the units of $\mathbb H_F$ defined by $s_{\alpha} \mapsto \overline{\tau}_{s_{\alpha}}$ is an injective group homomorphism.

 Any  $\zeta \in V^*$ defines $t \in \mathbb{T}=\mathrm{Hom}(X, \mathbb{C}^{\times})$ by 
 $x(t)=e^{x(\zeta)}$, for all $x\in X$.  We shall express this relationship by $t=\exp(\zeta)$. We shall say that  $\zeta$ is {\em real} for the root system $R$ if 
   $\alpha(\zeta) \in \mathbb R$ for all $\alpha\in R$. Then $t=\exp(\zeta)$ satisfies 
  $\alpha(t) >0$, for all $\alpha\in R$. Conversely, every such $t$ arises in this fashion, from a real $\zeta$. 
Let $\widehat{\mathcal Z}$ be the $\mathcal J_{Wt}$-adic completion of $\mathcal Z$ and let $\widehat{\mathbb{Z}}$ be the $\mathbb{J}_{W\zeta}$-adic completion of $\mathbb{Z}$. Let $\widehat{\mathcal H}=\widehat{\mathcal Z} \otimes_{\mathcal Z} \mathcal H$ and let  $\widehat{\mathbb H}=\widehat{\mathbb Z} \otimes_{\mathbb Z} \mathbb H$. Let $\widehat{\mathcal H}_F=\widehat{\mathcal Z} \otimes_{\mathcal Z} \mathcal H_F$ and let  $\widehat{\mathbb H}_F=\widehat{\mathbb Z} \otimes_{\mathbb Z} \mathbb H_F$. Let $\widehat{\mathcal A}=\widehat{\mathcal Z} \otimes_{\mathcal Z} \mathcal A$ and let  $\widehat{\mathbb A}=\widehat{\mathbb Z} \otimes_{\mathbb Z} \mathbb A$. Let $\widehat{\mathcal J}_{Wt}=\widehat{\mathcal Z}\otimes_{\mathcal Z} \mathcal J_{Wt}$ and let $\widehat{\mathbb J}_{W\zeta}=\widehat{\mathbb Z}\otimes_{\mathbb Z}\mathbb J_{W\zeta}$.

%The torus $T$ is given by the elements of the form  $e^{\sum_{\alpha \in \Pi} a_{\alpha}\alpha^{\vee} }$ such that $e^{\alpha'}(\gamma^{\vee})=e^{\langle \alpha',\gamma^{\vee} \rangle}$. 

%For each $t' \in Wt$, the map $\theta_{\alpha} \mapsto e^{\alpha}$ defines an isomorphism between $\widehat{\mathcal A}_{t'}$ and $\widehat{\mathbb A}_{\zeta'}$, and hence defines an isomorphism between $\widehat{\mathcal A}$ and $\widehat{\mathbb A}$. Here $\widehat{\mathcal A}_{t'}$ (resp. $\widehat{\mathbb A}_{\zeta'}$) are the localization of $\widehat{\mathcal A}$ (resp. $\widehat{\mathbb{A}}$) at $t'$ (resp. $\zeta'$).

\begin{theorem} \label{thm lusztig red thm}\cite[Theorem 9.3, Section 9.6]{Lu} Recall that we are assuming  that $\zeta\in V^*$ is real for the root system $R$. 
\begin{enumerate}
\item There is an isomorphism denoted $j$ between $\widehat{\mathcal H}_F$ and $\widehat{\mathbb{H}}_F$ determined by 
\[  j(\tau_{s_{\alpha}}) = \overline{\tau}_{s_{\alpha}} ,   \quad   j(\theta_{x})=e^{x}. \] 
\item The above map also induces isomorphisms between $\widehat{\mathcal Z}$ and $\widehat{\mathbb Z}$, between $ \widehat{\mathcal A}$ and $\widehat{\mathbb A}$ and between $\widehat{\mathcal H}$ and $\widehat{\mathbb{H}}$. 
\end{enumerate}
\end{theorem}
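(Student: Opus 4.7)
The plan is to follow Lusztig's strategy: build the isomorphism first on the commutative polynomial subalgebras via a formal exponential, then extend to the full algebras by transporting the reflection elements $\tau_{s_\alpha}$ and $\bar\tau_{s_\alpha}$, which were designed to obey parallel relations in the two settings. Throughout, the real hypothesis on $\zeta$ is what makes the exponential identification legitimate.

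I would begin by decomposing the completed polynomial parts orbit-by-orbit. By the Chinese Remainder Theorem one has $\widehat{\mathcal A}=\bigoplus_{t'\in Wt}\widehat{\mathcal A}_{t'}$ and $\widehat{\mathbb A}=\bigoplus_{\zeta'\in W\zeta}\widehat{\mathbb A}_{\zeta'}$. Since $\zeta$ is real, the map $\zeta'\mapsto \exp(\zeta')$ is a $W$-equivariant bijection $W\zeta\to Wt$, so the two decompositions are matched index-wise. On the component at $\zeta'$ define
\[
j(\theta_x)= x(t')\cdot \exp\bigl(x-x(\zeta')\bigr)=x(t')\sum_{k\geq 0}\frac{(x-x(\zeta'))^k}{k!},
\]
which converges in $\widehat{\mathbb A}_{\zeta'}$ because $x-x(\zeta')$ lies in the maximal ideal there. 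These assemble to a $W$-equivariant isomorphism $\widehat{\mathcal A}\xrightarrow{\sim}\widehat{\mathbb A}$, and by taking $W$-invariants one immediately obtains the claimed isomorphism $\widehat{\mathcal Z}\cong\widehat{\mathbb Z}$.

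Next I would extend $j$ to the whole algebra by setting $j(\tau_{s_\alpha})=\bar\tau_{s_\alpha}$. The key input from Lusztig is that the elements $\tau_{s_\alpha}\in\mathcal H_F$ and $\bar\tau_{s_\alpha}\in\mathbb H_F$ satisfy the same abstract relations: each gives a group-theoretic lift of $W$, and each intertwines the polynomial subalgebra by the rule $\tau_{s_\alpha}\cdot a=s_\alpha(a)\cdot\tau_{s_\alpha}$ for $a$ in $\mathcal A$ respectively $\mathbb A$. Combined with the $W$-equivariance of $j|_{\widehat{\mathcal A}}$ established above, this shows that $j$ extends uniquely to an algebra map $\widehat{\mathcal H}_F\to\widehat{\mathbb H}_F$ which is an isomorphism since one can write the inverse directly.

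The main technical obstacle is part (2), showing that $j$ restricts to an isomorphism $\widehat{\mathcal H}\to\widehat{\mathbb H}$ rather than merely identifying the $\mathcal F$-enlargements. Concretely, one must verify that each image $j(T_{s_\alpha})$ lies in $\widehat{\mathbb H}$. From $T_{s_\alpha}+1=(\tau_{s_\alpha}+1)\mathcal G(\alpha)$ one computes $j(T_{s_\alpha})+1=(\bar\tau_{s_\alpha}+1)\cdot j(\mathcal G(\alpha))$ with $j(\mathcal G(\alpha))=(qe^\alpha-1)/(e^\alpha-1)$; comparing with $t_{s_\alpha}+1=(\bar\tau_{s_\alpha}+1)g(\alpha)$, the obstruction is to show that $j(\mathcal G(\alpha))/g(\alpha)=\alpha(qe^\alpha-1)/((e^\alpha-1)(\alpha+\log q))$ is a unit in each completion $\widehat{\mathbb A}_{\zeta'}$. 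The real hypothesis forces $\alpha(\zeta')\in\mathbb R$, so the only possible singularity of $(e^\alpha-1)^{-1}$ occurs at $\alpha(\zeta')=0$, and there it is cancelled by the factor $\alpha$; at this component $\alpha+\log q$ is invertible since $\log q\neq 0$, while at components with $\alpha(\zeta')\neq 0$ the denominator $e^\alpha-1$ is itself invertible. A parallel analysis with the roles of $T$ and $t$ reversed (using $j^{-1}$) then shows that the inverse also preserves the integral forms, completing the proof.
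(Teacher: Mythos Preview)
Your sketch is essentially Lusztig's argument, and it matches the paper's treatment: the paper does not prove this theorem but cites \cite[Theorem 9.3, Section 9.6]{Lu}, singling out as the crucial point precisely the invertibility in $\widehat{\mathbb A}$ of
\[
\frac{e^{\alpha}q-1}{e^{\alpha}-1}\cdot\frac{\alpha}{\alpha+\log q},
\]
which is exactly the ratio $j(\mathcal G(\alpha))/g(\alpha)$ you isolate. So your approach and the paper's are the same.

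One small gap in your invertibility check: you only treat the pole of $(e^{\alpha}-1)^{-1}$ at $\alpha(\zeta')=0$ and then declare the case $\alpha(\zeta')\neq 0$ finished because $e^{\alpha}-1$ is a unit there. But the other denominator factor $\alpha+\log q$ can also vanish, namely at $\alpha(\zeta')=-\log q$ (which is allowed since $\zeta'$ is real). At that point the numerator factor $qe^{\alpha}-1$ vanishes to the same order, so the ratio is still holomorphic and nonzero; you should say this explicitly. Likewise, to conclude \emph{unit} (not merely regular) you need nonvanishing at every $\zeta'\in W\zeta$: at $\alpha(\zeta')=0$ the limit of $\alpha/(e^{\alpha}-1)$ is $1$ and $qe^{\alpha}-1=q-1\neq 0$; at $\alpha(\zeta')=-\log q$ the limit of $(qe^{\alpha}-1)/(\alpha+\log q)$ is $1$ and $\alpha,\,e^{\alpha}-1$ are nonzero; elsewhere all four factors are units. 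With these two sentences added, your argument is complete.
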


A crucial point for the proof of (2) is the fact that
\[ \frac{e^{\alpha}q-1}{e^{\alpha}-1} \cdot\frac{\alpha}{\alpha+\log q} \in \mathbb{F}
\]
is holomorphic and nonvanishing at any $\zeta' \in W\zeta$, and hence is an invertible element in $\widehat{\mathbb{A}}$. 

%By \cite[Theorem 9.3]{Lu}, we have
%\[ \widehat{\mathcal Z} \cong \widehat{\mathbb Z}, \quad \widehat{\mathcal A} \cong \widehat{\mathbb A}, \quad  \widehat{\mathcal H} \cong \widehat{\mathbb{H}} .\]
Now (2) gives the following isomorphisms:
\[ \mathcal H/\mathcal J_{Wt}^i\mathcal H \cong \widehat{\mathcal H}/ \widehat{\mathcal J}_{Wt}^i\widehat{\mathcal H} \cong \widehat{\mathbb H}/ \widehat{\mathbb J}_{W\zeta}^i\widehat{\mathbb H} \cong \mathbb H/\mathbb J_{W\zeta}^i\mathbb H 
\]
and hence:

\begin{theorem} \label{thm equiv cat} \cite[Section 10]{Lu} Assume that $\zeta\in V^*$ is real. 
There is an equivalence of categories between the category of finite-dimensional 
$\mathbb{H}$-modules annihilated by a power of $\mathbb J_{W\zeta}$
and the category of finite-dimensional $\mathcal H$-modules annihilated by a power of $\mathcal J_{Wt}$, where $t=\exp(\zeta)$. 
\end{theorem}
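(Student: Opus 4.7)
The plan is to leverage Theorem \ref{thm lusztig red thm}(2) at the level of algebras to produce the categorical equivalence, by passing through completions on both sides and observing that every finite-dimensional module with the specified central character support is naturally a module over the completion. The chain of algebra isomorphisms
\[
 \mathcal H/\mathcal J_{Wt}^i\mathcal H \;\cong\; \widehat{\mathcal H}/ \widehat{\mathcal J}_{Wt}^i\widehat{\mathcal H} \;\cong\; \widehat{\mathbb H}/ \widehat{\mathbb J}_{W\zeta}^i\widehat{\mathbb H} \;\cong\; \mathbb H/\mathbb J_{W\zeta}^i\mathbb H
\]
already displayed just before the theorem supplies, for each $i\geq 1$, an isomorphism of finite quotient algebras, and hence an equivalence between their respective module categories.

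First, I would define the functor in the $\mathcal H\to \mathbb H$ direction. Given a finite-dimensional $\mathcal H$-module $\pi$ annihilated by $\mathcal J_{Wt}^i$, it factors through $\mathcal H/\mathcal J_{Wt}^i\mathcal H$; using the algebra isomorphism above one obtains a module $\Phi(\pi)$ over $\mathbb H/\mathbb J_{W\zeta}^i \mathbb H$, which I then view as an $\mathbb H$-module annihilated by $\mathbb J_{W\zeta}^i$. A subtle point to check is that the resulting functor is independent of the choice of $i$: if $\pi$ is also annihilated by $\mathcal J_{Wt}^{i'}\mathcal J_{Wt}^i$-modules with $i'>i$, one must verify that the construction via the level-$i$ and level-$i'$ quotients yields the same $\mathbb H$-action. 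This follows because both quotient-level isomorphisms arise from the single isomorphism $\widehat{\mathcal H}\cong \widehat{\mathbb H}$ of Theorem \ref{thm lusztig red thm}, and the reduction maps $\widehat{\mathcal H}/\widehat{\mathcal J}_{Wt}^{i'}\widehat{\mathcal H}\twoheadrightarrow \widehat{\mathcal H}/\widehat{\mathcal J}_{Wt}^{i}\widehat{\mathcal H}$ are intertwined by $j$ with the analogous reductions on the graded side. The inverse functor $\Psi$ is constructed symmetrically using $j^{-1}$.

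Next, I would verify $\Psi\circ\Phi \cong \mathrm{Id}$ and $\Phi\circ\Psi\cong \mathrm{Id}$. Since both functors are built out of pull-back along honest algebra isomorphisms of the finite quotients, the natural isomorphism on underlying vector spaces is simply the identity, and compatibility with the actions is immediate from $j\circ j^{-1}=\mathrm{Id}$ at the level of the completions. Functoriality (covariance in morphisms) is equally clear, since $\mathcal H$-linear maps between modules killed by $\mathcal J_{Wt}^i$ are precisely $\mathcal H/\mathcal J_{Wt}^i\mathcal H$-linear maps, and these transport under the isomorphism to $\mathbb H/\mathbb J_{W\zeta}^i\mathbb H$-linear maps.

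The only nontrivial piece is really the verification of the middle isomorphism $\widehat{\mathcal H}/\widehat{\mathcal J}_{Wt}^i\widehat{\mathcal H}\cong \widehat{\mathbb H}/\widehat{\mathbb J}_{W\zeta}^i\widehat{\mathbb H}$, which is exactly where the reality hypothesis on $\zeta$ enters through Theorem \ref{thm lusztig red thm}; this guarantees invertibility of the factor $\tfrac{e^{\alpha}q-1}{e^{\alpha}-1}\cdot \tfrac{\alpha}{\alpha+\log q}$ in $\widehat{\mathbb A}$, so that the map $j$ is well-defined on the completions and sends $\widehat{\mathcal J}_{Wt}$ to $\widehat{\mathbb J}_{W\zeta}$. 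Given this, the rest of the argument is bookkeeping. Thus the main obstacle is not really in the categorical step itself, but rather in ensuring the compatibility of $j$ with the ideal filtrations, which is implicit in Theorem \ref{thm lusztig red thm}(2).
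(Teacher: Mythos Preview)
Your proposal is correct and follows exactly the paper's approach: the paper derives the theorem directly from the displayed chain of isomorphisms $\mathcal H/\mathcal J_{Wt}^i\mathcal H \cong \widehat{\mathcal H}/ \widehat{\mathcal J}_{Wt}^i\widehat{\mathcal H} \cong \widehat{\mathbb H}/ \widehat{\mathbb J}_{W\zeta}^i\widehat{\mathbb H} \cong \mathbb H/\mathbb J_{W\zeta}^i\mathbb H$ furnished by Theorem~\ref{thm lusztig red thm}(2), and your write-up simply makes explicit the bookkeeping (independence of the level $i$, construction of the inverse functor, and where the reality hypothesis enters) that the paper leaves to the reader and to the citation of Lusztig.
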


Let $\Lambda$ be the functor in Theorem \ref{thm equiv cat}.  Explicitly, for a finite-dimensional $\mathbb H$-module annihilated by a power of $\mathbb J_{W\zeta}$, 
$\Lambda(\pi)$ is equal to $\pi$, as linear spaces,  but the $\mathcal H$-action on $\pi$ is given by 
\[   h \cdot_{\mathcal {H}} x = j (h) \cdot_{\widehat{\mathbb H}} x,
\]
where $h \in \mathcal{H}$ and $x\in \pi$. Note that the functor extends to the category of finite dimensional $\mathbb H$-modules that are sums of $\mathbb H$-modules, 
where each summand is annihilated by a power of $\mathbb J_{W\zeta}$ for some real $\zeta$. 

%We shall need (a slight variation of) the following result in the second reduction for the Bernstein-Zelevinski derivatives:

\begin{proposition} \label{prop translate sign} Recall the sign projector $\mathbf S=\sum_{w \in W} (-1/q)^{l(w)} T_w$ in $\mathcal H$ and let 
$\mathbf s=\sum_{w \in W} (-1)^{l(w)} t_w$ be the corresponding sign projector in $\mathbb H$.  Then $j(\mathbf S)=a\cdot \mathbf s$, 
where $a$ is an invertible element in $ \widehat{\mathbb A}$.  

\end{proposition}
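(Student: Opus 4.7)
The plan is to compare $\mathbf{S}$ and $\mathbf{s}$ by expressing both in the intertwiner bases, on which $j$ acts transparently. Introduce
\[
\mathbf{S}_\tau \;=\; \sum_{w\in W}(-1)^{l(w)}\tau_w \;\in\; \mathcal{H}_F,\qquad \bar{\mathbf{S}}_\tau \;=\; \sum_{w\in W}(-1)^{l(w)}\bar\tau_w \;\in\; \mathbb{H}_F.
\]
Since $j(\tau_w)=\bar\tau_w$ for every $w\in W$, the definition of $j$ immediately gives $j(\mathbf{S}_\tau)=\bar{\mathbf{S}}_\tau$. The problem therefore reduces to expressing $\mathbf{S}$ in terms of $\mathbf{S}_\tau$ (and $\mathbf{s}$ in terms of $\bar{\mathbf{S}}_\tau$) via explicit scalars in $\mathcal{F}$ and $\mathbb{F}$.

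The key step is to establish identities of the form $\mathbf{S}=c_{\mathcal{A}}\cdot\mathbf{S}_\tau$ and $\mathbf{s}=c_{\mathbb{A}}\cdot\bar{\mathbf{S}}_\tau$ with the explicit choices
\[
c_{\mathcal{A}}\;=\;q^{-|R^+|}\prod_{\alpha\in R^+}s_\alpha(\mathcal{G}(\alpha)),\qquad c_{\mathbb{A}}\;=\;\prod_{\alpha\in R^+}s_\alpha(g(\alpha)),
\]
the products being well defined since $\mathcal{A}$ and $\mathbb{A}$ are commutative. In rank one this is a short computation: combining $T_{s_\alpha}+1=(\tau_{s_\alpha}+1)\mathcal{G}(\alpha)$ with the commutation $\tau_{s_\alpha}f=s_\alpha(f)\tau_{s_\alpha}$ yields $T_{s_\alpha}=s_\alpha(\mathcal{G}(\alpha))\tau_{s_\alpha}+\mathcal{G}(\alpha)-1$, and then the identity $q+1-\mathcal{G}(\alpha)=s_\alpha(\mathcal{G}(\alpha))$ produces $\mathbf{S}=(s_\alpha(\mathcal{G}(\alpha))/q)(1-\tau_{s_\alpha})$; the identical manipulation on the graded side (after a brief divided-difference check that $\bar\tau_{s_\alpha}g(\alpha)=s_\alpha(g(\alpha))\bar\tau_{s_\alpha}$) gives the corresponding formula for $\mathbf{s}$. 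For higher rank, expanding $\mathbf{S}=\sum_w\tau_w\,c_w$ in the $\tau$-basis, the relation $T_{s_\alpha}\mathbf{S}=-\mathbf{S}$ forces $c_w=(-1)^{l(w)}w^{-1}(c_{\mathcal{A}})$ with $c_{\mathcal{A}}=c_1$ satisfying the cocycle $s_\alpha(c_{\mathcal{A}})/c_{\mathcal{A}}=\mathcal{G}(\alpha)/s_\alpha(\mathcal{G}(\alpha))$ for every simple $\alpha$; one checks that the proposed product satisfies this cocycle telescopically, using that $s_\beta$ permutes $R^+\setminus\{\beta\}$ and sends $\beta\mapsto -\beta$.

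Granting this lemma, applying $j$ to the affine identity and dividing by the graded one yields
\[
j(\mathbf{S}) \;=\; j(c_{\mathcal{A}})\,\bar{\mathbf{S}}_\tau \;=\; \frac{j(c_{\mathcal{A}})}{c_{\mathbb{A}}}\cdot\mathbf{s} \;=:\; a\cdot\mathbf{s},
\]
with
\[
a \;=\; q^{-|R^+|}\prod_{\alpha\in R^+}s_\alpha\!\left(\frac{j(\mathcal{G}(\alpha))}{g(\alpha)}\right).
\]
Each factor $j(\mathcal{G}(\alpha))/g(\alpha) = \tfrac{e^{\alpha}q-1}{e^{\alpha}-1}\cdot\tfrac{\alpha}{\alpha+\log q}$ is a unit in $\widehat{\mathbb{A}}$ by the holomorphy/non-vanishing fact recalled immediately after Theorem \ref{thm lusztig red thm} (the statement applies to every root for real $\zeta$, as one checks by L'Hôpital at $\alpha=0$ and $\alpha=\pm\log q$). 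Since $W$ preserves $W\zeta$, each $s_\alpha$ preserves $\widehat{\mathbb{A}}$ and its unit group, so the entire product, and hence $a$, is a unit in $\widehat{\mathbb{A}}$.

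The main obstacle is the higher-rank verification of the product formula for $c_{\mathcal{A}}$: while the cocycle check is a quick telescoping and the rank-one case is immediate, pinning down the overall $W$-invariant normalization requires either a careful induction on length tracking the commutations $\tau_w f=w(f)\tau_w$ in the expansion of $\sum_w(-1/q)^{l(w)}T_w$ in the $\tau$-basis, or invoking the uniqueness (up to an $\mathcal{F}^W$-scalar) of cocycle solutions and comparing the $\tau_1$-coefficients on both sides.
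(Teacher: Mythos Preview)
Your approach is correct in outline and would yield more---an explicit formula for $a$---but it is considerably more laborious than the paper's, and the gap you flag is real: without pinning down the normalization of $c_{\mathcal A}$, the cocycle argument only determines $a$ up to a factor in $\widehat{\mathbb F}^{W}$, which is not a priori a unit in $\widehat{\mathbb A}$. Your suggested remedies (length induction, or matching $\tau_1$-coefficients) would work but are nontrivial bookkeeping.

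The paper sidesteps all of this by never attempting to compute $j(\mathbf S)$ in the $\tau$-basis. It writes, for each simple $\alpha$,
\[
\mathbf S \;=\; \Bigl(\sum_{w\in W^{\Pi\setminus\{\alpha\}}}(-1/q)^{l(w)}T_w\Bigr)(1-q^{-1}T_{s_\alpha}),
\]
applies $j$ and uses only the rank-one identity $j(1-q^{-1}T_{s_\alpha})=j(\mathcal G(-\alpha))^{-1}g(-\alpha)q^{-1}(1-t_{s_\alpha})$ (your own rank-one computation) to conclude $j(\mathbf S)\,t_{s_\alpha}=-\,j(\mathbf S)$. Since this holds for every simple $\alpha$, one gets $j(\mathbf S)\in \widehat{\mathbb H}\cdot\mathbf s$; and $\widehat{\mathbb H}\cdot\mathbf s=\widehat{\mathbb A}\cdot\mathbf s$ because $\mathbb C[W]\cdot\mathbf s=\mathbb C\,\mathbf s$. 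Hence $j(\mathbf S)=a\cdot\mathbf s$ for some $a\in\widehat{\mathbb A}$, with no explicit formula needed. Invertibility is obtained by symmetry: the identical argument for $j^{-1}$ gives $j^{-1}(\mathbf s)=b\cdot\mathbf S$ with $b\in\widehat{\mathcal A}$, whence $j(b)\,a=1$.

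In short: your route gives the explicit product $a=q^{-|R^+|}\prod_{\alpha\in R^+}s_\alpha\bigl(j(\mathcal G(\alpha))/g(\alpha)\bigr)$, at the cost of a higher-rank verification you have not carried out; the paper's route trades this explicitness for a two-line argument that avoids the intertwiner expansion entirely and handles invertibility by a symmetry trick rather than by checking units factor by factor.
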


\begin{proof} Let $\alpha\in \Pi$. 
Firstly, by a direct computation, we have
\[ 
j( 1-q^{-1} T_{s_{\alpha}})= j(\mathcal G(-\alpha))^{-1} g(-\alpha) q^{-1} (1-t_{s_{\alpha}}).  
\] 
Secondly, 
\[  
 \mathbf S=\left(\sum_{w \in W^{\Pi \setminus \left\{ \alpha \right\}}} (-1/q)^{l(w)} T_{w}\right) (1-q^{-1}T_{s_{\alpha}}) ,
\]
where $W^{\Pi\setminus \left\{ \alpha \right\}}$ is the set of minimal representatives of $W/W_{\Pi \setminus \left\{ \alpha \right\}}$ and $W_{\Pi \setminus \left\{ \alpha \right\}}$ is the parabolic subgroup associated to $\Pi \setminus \left\{ \alpha \right\}$. Therefore 
\[
j(\mathbf S)=  j\left(\sum_{w \in W^{\Pi \setminus \left\{ \alpha \right\}}} (-1/q)^{l(w)} T_w \right)j({\mathcal G}(-\alpha))^{-1}g(-\alpha)q^{-1}(1-t_{s_{\alpha}}).\] 
 Hence we have $j(\mathbf S)t_{s_{\alpha}}=-j(\mathbf S)$. This shows that $j(\mathbf S)\in \widehat{\mathbb H}\cdot  {\mathbf s}$. 
 Since $\widehat{\mathbb H}\cdot  {\mathbf s}=\widehat{\mathbb A}\cdot  {\mathbf s}$, we have  $j(\mathbf S)=a\cdot \mathbf s$, for some $a\in \widehat{\mathbb A}$. 
  Using the same argument, for $j^{-1}$, we obtain $j^{-1}(\mathbf s)= b \cdot \mathbf S$ for some $b\in \widehat{\mathcal A}$.  Hence $j(b)a=1$  and $a$ is invertible. 
\end{proof}

We have the following corollary to Proposition \ref{prop translate sign}: 

\begin{corollary} \label{cor equality sign }
Let $\pi$ be a finite dimensional  $\mathbb H$-module annihilated by a power of $\mathcal J_{W\zeta}$, where $\zeta\in V^*$ is real. 
 Identify $\pi$ and $\Lambda(\pi)$ as linear spaces. 
The multiplication by $a\in \widehat{\mathbb A}$ (from Proposition \ref{prop translate sign})  provides a natural isomorphism between the linear spaces  $ \mathbf{s}(\pi)$ and $\mathbf S(\Lambda(\pi))$. 
\end{corollary}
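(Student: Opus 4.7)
The plan is to simply unwind the definitions and use Proposition \ref{prop translate sign} together with the invertibility of $a$.

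First, I would recall that since $\pi$ is a finite dimensional $\mathbb H$-module annihilated by a power of $\mathbb J_{W\zeta}$, it is naturally a module for the completion $\widehat{\mathbb H}$, and in particular a module for $\widehat{\mathbb A}$. Consequently, the element $a \in \widehat{\mathbb A}$ acts on $\pi$ as a linear endomorphism, and because $a$ is invertible in $\widehat{\mathbb A}$ (by the conclusion of Proposition \ref{prop translate sign}), the map $m_a : \pi \to \pi$ given by $x \mapsto a \cdot x$ is a linear automorphism of $\pi$.

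Next I would compute $\mathbf{S}(\Lambda(\pi))$ directly from the definition of the functor $\Lambda$. By construction, the action of $h \in \mathcal H$ on $\Lambda(\pi)$ coincides with the action of $j(h) \in \widehat{\mathbb H}$ on $\pi$. Applying this to $h=\mathbf S$ and using Proposition \ref{prop translate sign}, which gives $j(\mathbf S) = a \cdot \mathbf s$, we obtain
\[
\mathbf S(\Lambda(\pi)) = j(\mathbf S) \cdot \pi = (a \cdot \mathbf s) \cdot \pi = a \cdot \mathbf s(\pi) = m_a(\mathbf s(\pi)).
\]
Thus $m_a$ restricts to a surjection from $\mathbf s(\pi)$ onto $\mathbf S(\Lambda(\pi))$, and since $m_a$ is a linear automorphism of $\pi$, this restriction is injective as well.

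Hence $m_a$ provides the desired linear isomorphism $\mathbf s(\pi) \xrightarrow{\sim} \mathbf S(\Lambda(\pi))$. Naturality is automatic: any $\mathbb H$-module map $\phi : \pi \to \pi'$ between modules annihilated by powers of $\mathbb J_{W\zeta}$ commutes with the $\widehat{\mathbb A}$-action, hence with multiplication by $a$, so it intertwines the two isomorphisms $m_a$ associated to $\pi$ and $\pi'$. There is no real obstacle here; the only slightly subtle point is to remember that the $\mathcal H$-action on $\Lambda(\pi)$ is defined through $j$ into $\widehat{\mathbb H}$ rather than through $\mathbb H$ directly, which is precisely what allows the invertible element $a \in \widehat{\mathbb A} \setminus \mathbb A$ to appear and still make sense as an operator.
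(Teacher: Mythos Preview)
Your argument is correct and is exactly the unwinding the paper has in mind; the paper states this as an immediate corollary of Proposition \ref{prop translate sign} without giving any separate proof, and your computation $\mathbf S(\Lambda(\pi)) = j(\mathbf S)\cdot\pi = a\cdot\mathbf s(\pi) = m_a(\mathbf s(\pi))$ together with the invertibility of $a$ is precisely the intended reasoning.
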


\subsection{Bernstein-Zelevinsky derivatives for graded algebras} 
Let $V_n=X_n \otimes_{\mathbb Z}  \mathbb C$, and $\mathbb{H}_n:=\mathbb{H}(V_n, R_n, \Pi_n, \log q)$. For every $i=0, \ldots, n$, we have a Levi subalgebra 
$\mathbb H_{n-i} \otimes \mathbb H_i$.  Let $\mathbf s_i\in \mathbb H_i$ be the sign projector, and let $\mathbf s^n_i \in \mathbb H_n$ be 
the image of $1\otimes \mathbf s_i$ under the inclusion $\mathbb H_{n-i} \otimes \mathbb H_i\subseteq \mathbb H_n$. 

Let $\pi$ be a finite dimensional representation of $\mathbb H_n$.  The $i$-the Bernstein-Zelevinsky derivative of $\pi$ is the natural 
$\mathbb H_{n-i}$-module 
\[ 
\mathbf{gBZ}_i(\pi):=\mathbf s_i^n(\pi). 
\] 

Write any $\zeta\in V_n^*= \Hom(X_n, \mathbb C)$ as an $n$-tuple $(\zeta_1, \ldots, \zeta_n)$ where $\zeta_i$ is the value of $\zeta$ on 
the standard basis element $\epsilon_i\in X_n$. In this case $\zeta$ is real for $R_n$ if and only if $\zeta_k-\zeta_l \in \mathbb R$ for all $1\leq k,l\leq n$.  

\begin{theorem} \label{thm real bzder}  Assume that $\zeta\in V_n^*$ is real for the root system $R_n$, and $\pi$ is a finite-dimensional $\mathbb H_n$-module annihilated by a power 
of $\mathbb J_{S_n\zeta}$. There is a natural isomorphism of $\mathcal H_{n-i}$-modules 
$\mathbf{BZ}_i(\Lambda(\pi))$ and $\Lambda(\mathbf{gBZ}_i(\pi))$. 
\end{theorem}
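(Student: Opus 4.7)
The plan is to combine the sign-projector correspondence of Corollary \ref{cor equality sign }, applied inside the Levi factor $\mathbb H_i\hookrightarrow\mathbb H_n$, with a compatibility between Lusztig's reduction functor $\Lambda$ and the Levi inclusions on both sides. The projectors $\mathbf s_i^n$ and $\mathbf S_i^n$ live entirely in the Levi factors $\mathbb H_i$ and $\mathcal H_i$ respectively, so once restriction along the Levi embedding is known to commute with $\Lambda$, the theorem reduces to the single-factor correspondence already proved.

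First I would establish Levi compatibility. For $\pi$ annihilated by a power of $\mathbb J_{S_n\zeta}$ with $\zeta$ real, decompose the restriction $\pi|_{\mathbb H_{n-i}\otimes \mathbb H_i}$ by central characters of the Levi algebra,
\[
\pi=\bigoplus_{(\zeta',\zeta'')}\pi_{(\zeta',\zeta'')},
\]
where each pair $(\zeta',\zeta'')$ arises by partitioning the coordinates of some $w\zeta$, $w\in S_n$. Since all coordinates of $\zeta$ share a common imaginary part, so do those of $\zeta'$ and $\zeta''$, and each is real for its root subsystem; thus the Lusztig reduction applies to each summand. The key claim is that, under the identification $\Lambda(\pi)=\pi$ of underlying vector spaces,
\[
\Lambda(\pi)|_{\mathcal H_{n-i}\otimes \mathcal H_i}\;\cong\;\bigoplus_{(\zeta',\zeta'')}\Lambda(\pi_{(\zeta',\zeta'')}),
\]
where the Levi functor $\Lambda$ acts at the appropriate real central character $(\zeta',\zeta'')$. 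This is verified by checking the equality $j_n\circ \iota_{\mathcal H}=\iota_{\mathbb H}\circ(j_{n-i}\otimes j_i)$ on Bernstein generators of $\mathcal H_{n-i}\otimes \mathcal H_i$: on the commutative element $\theta_x$ both paths send $\theta_x\mapsto e^x$; on a simple reflection $T_{s_\alpha}$ of the Levi Weyl group both paths produce the same element in terms of $t_{s_\alpha}$, $\bar\tau_{s_\alpha}$, $\mathcal G(\alpha)$, and $g(\alpha)$, because the intertwiner formulas defining $\tau_{s_\alpha}$ and $\bar\tau_{s_\alpha}$ depend only on the simple root $\alpha$, not on the ambient root system.

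With Levi compatibility in hand, applying the sign projector is a matter internal to the $\mathbb H_i$-factor. For each summand, Corollary \ref{cor equality sign } applied to the pair $(\mathbb H_i,\mathcal H_i)$ at the real parameter $\zeta''$ provides an invertible $a_{\zeta''}\in\widehat{\mathbb A}_i$ such that multiplication by $a_{\zeta''}$ gives a linear bijection
\[
\mathbf s_i(\pi_{(\zeta',\zeta'')})\;\longrightarrow\;\mathbf S_i(\Lambda(\pi_{(\zeta',\zeta'')})).
\]
Summing over $(\zeta',\zeta'')$ and bundling with the identity on the $\mathcal H_{n-i}$-action yields the desired natural isomorphism of $\mathcal H_{n-i}$-modules
\[
\Lambda(\mathbf{gBZ}_i(\pi))\;\cong\;\mathbf{BZ}_i(\Lambda(\pi)).
\]
Naturality in $\pi$ follows from the functoriality of the constructions involved.

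The main obstacle is bookkeeping the various completions in the Levi compatibility step. The isomorphism $j_n\colon\widehat{\mathcal H}_n\to\widehat{\mathbb H}_n$ is at the single central character $\mathcal J_{S_nt}$, whereas each Levi $\Lambda$ lives at its own central character $\mathcal J_{S_{n-i}t'}\otimes \mathcal J_{S_it''}$. To align them, I would use the idempotent decomposition $\widehat{\mathcal A}_n=\bigoplus_{t'\in S_nt}\widehat{\mathcal A}_{t'}$ from the proof of Theorem \ref{thm equ cat first red}, together with its graded analogue, to refine $\widehat{\mathcal H}_n$ and $\widehat{\mathbb H}_n$ into blocks indexed by Levi central characters; on each block the ambient Lusztig isomorphism factors through the Levi Lusztig isomorphism, reducing the identification to the generator check above. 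Once this bookkeeping is arranged, the rest of the argument is a direct assembly.
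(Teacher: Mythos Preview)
Your proposal is correct and follows essentially the same route as the paper: decompose $\pi$ along Levi central characters, invoke compatibility of $\Lambda$ with Levi restriction, and then apply Corollary \ref{cor equality sign } inside the $\mathbb H_i$-factor, noting that the invertible element $a\in\widehat{\mathbb A}_i$ commutes with the $\mathcal H_{n-i}$-action. The paper states the Levi compatibility as an observation without the generator check you spell out, and decomposes only along $\mathbb H_i$-central characters rather than full Levi central characters, but these are cosmetic differences.
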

\begin{proof} Note that the functor $\Lambda$ commutes with the restriction to Levi subalgebras, that is, we can either restrict to $\mathbb H_{n-i} \otimes \mathbb H_i$ and 
then apply $\Lambda$, or apply $\Lambda$ and then restrict to $\mathcal H_{n-i} \otimes \mathcal H_i$. Decompose $\pi$ under the action of $\mathbb H_i$ 
\[ 
\pi =\oplus \pi_{[S_i \zeta']} 
\] 
where $\pi_{S_i \zeta'}$ is the summand annihilated by a power of $\mathbb J_{S_i\zeta'}$. Concretely, the sum runs over $S_i$-orbits of the $i$-tuples $\zeta'$ that appear 
as the tail end of the $n$-tuples in the $S_n$-orbit of $\zeta$. We have the corresponding decomposition for the action of $\mathcal H_i$, 
\[ 
\Lambda(\pi) =\oplus \Lambda(\pi)_{[S_i t']} 
\] 
where $t'=\exp(\zeta')$. (The underlying vector spaces of $\pi_{[S_i \zeta']} $ and $\Lambda(\pi)_{[S_i t']}$ are the same.) It follows that $\Lambda(\pi)_{[S_i t']}$ and $\Lambda( \pi_{[S_i \zeta']} )$ are isomorphic $\mathcal H_{n-i} \otimes \mathcal H_i$-modules. 
Recall that $\mathbf S_i^n= 1\otimes \mathbf S_i$ and $ \mathbf s_i^n=1\otimes \mathbf s_i$, where $\mathbf S_i$ and $\mathbf s_i$ are the sign projectors in 
$\mathcal H_i$ and $\mathbb H_i$, respectively. 
Now we have the following isomorphisms of $\mathcal H_{n-i}$-modules 
\[ 
\mathbf S_i^n(\Lambda(\pi)_{[S_i t']}) \cong \mathbf S_i^n( \Lambda( \pi_{[S_i \zeta']} )\cong \Lambda( \mathbf s_i^n(\pi_{[S_i\zeta']}))
\] 
where the second is furnished by Corollary \ref{cor equality sign }.  This isomorphism is given by the action of an invertible element in 
$\widehat{\mathbb H}_i$ and therefore intertwines $\mathcal H_{n-i}$-action. 
\end{proof}

\subsection{Second reduction for Bernstein-Zelevinsky derivatives}

In this section, we transfer the problem of computing Bernstein-Zelevinsky derivatives $\mathbf{BZ}_{\mathbf i}^{\mathbf n}$ in Theorem \ref{thm first red bz} to the corresponding problem for graded Hecke algebras. We retain the notations in Sections \ref{s first l red}  and \ref{s translate BZ}. In particular, $\mathbf n=(n_1, \ldots, n_m)$ is a 
partition of $n$, and  we have fixed $t\in \mathbb{T}_n$ such that  $\alpha(t)>0$ for all $\alpha \in R_{\mathbf n}$. 
Then there exists $\zeta\in V_n^*$, real  for the root system $R_{\mathbf n}$, such that $t=\exp(\zeta)$. 
Let 
\[ \mathbb H_{\mathbf n}:= \mathbb{H}(V_n, R_{\mathbf n}, \Pi_{\mathbf n},  \log q)\cong \mathbb H_{n_1} \otimes \ldots \otimes \mathbb H_{n_m}.
\]
Let $\mathbf i=(i_1, \ldots, i_m)$ be an $m$-tuple of integers such that $0\leq i_k\leq n_k$ for all $k$ and 
 $\mathbf n-\mathbf i=(n_1- i_1,\ldots, n_m-i_m)$.  Each pair $(n_k-i_k, i_k)$ gives rise to an embedding 
$\mathbb H_{n_k-i_k} \otimes \mathbb H_{i_k} \subseteq \mathbb H_{n_k}$, and these combine to give an embedding 
\[ 
\mathbb H_{\mathbf n-\mathbf i} \otimes \mathbb H_{\mathbf i} \subseteq \mathbb H_{\mathbf n}
\] 
where $\mathbb H_{\mathbf i} \cong \mathbb H_{i_1} \otimes \ldots \otimes \mathbb H_{i_m}$ etc. 
Abusing notation, we shall identify $\mathbb H_{\mathbf n-\mathbf i}$ with its image in $\mathbb H_{\mathbf n}$ via the map $h\mapsto h\otimes 1$. 
Let $\mathbf s_{\mathbf i}\in \mathbb H_{\mathbf i}$ be the sign projector in $\mathbb H_{\mathbf i}$, 
and let $\mathbf s_{\mathbf i}^{\mathbf n}$ be the image of $1\otimes \mathbf s_{\mathbf i}$ in $\mathbb H_{\mathbf n}$.  Let $\sigma$ be an $\mathbb H_{\mathbf n}$-module. 
Then $\mathbf s_{\mathbf i}^{\mathbf n}(\sigma)$ is naturally an $\mathbb H_{\mathbf n-\mathbf i}$-module. Thus we have a functor 
\[ 
 \mathbf{gBZ}^{\mathbf n}_{\mathbf i}(\sigma): =\mathbf s_{\mathbf i}^{\mathbf n}(\sigma) 
\]
 from the category of $\mathbb H_{\mathbf n}$-modules to the category of ${\mathbb H}_{\mathbf n-\mathbf i}$-modules. The following is proved in the 
 same way as Theorem \ref{thm real bzder}.

\begin{theorem} \label{thm bz derivative graded case} Let $\zeta\in V^*_n$ be real for the root system $R_{\mathbf n}$. 
 Let $\pi$ be a finite-dimensional $\mathbb{H}_{\mathbf n}$-module annihilated by a power of $\mathbb J_{S_{\mathbf n}\zeta}$.
 Then we have a natural isomorphism of $\mathcal H_{\mathbf n-\mathbf i}$-modules 
\[   (\mathbf{BZ}_{\mathbf i}^{\mathbf n}(\Lambda(\pi)) \cong \Lambda(\mathbf{gBZ}_{\mathbf i}^{\mathbf n}((\pi)) . 
\]

\end{theorem}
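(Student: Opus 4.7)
The plan is to mirror the proof of Theorem \ref{thm real bzder} essentially verbatim, with the partition pair $(\mathbf n, \mathbf i)$ in place of $(n,i)$. Three ingredients are needed: that the Lusztig equivalence $\Lambda$ commutes with restriction to the Levi subalgebras, a central character decomposition under the $\mathbb H_{\mathbf i}$-action, and Corollary \ref{cor equality sign } applied inside the factor $\mathbb H_{\mathbf i}$.

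First, I would check that $\Lambda$ commutes with restriction from $\mathbb H_{\mathbf n}$ to $\mathbb H_{\mathbf n-\mathbf i}\otimes\mathbb H_{\mathbf i}$. This is because the isomorphism $j$ of Theorem \ref{thm lusztig red thm} is defined by the uniform formulas $j(\overline{\tau}_{s_\alpha})=\tau_{s_\alpha}$ and $j(e^x)=\theta_x$; these formulas are compatible with the natural inclusions $\mathbb H_{\mathbf n-\mathbf i}\otimes\mathbb H_{\mathbf i}\hookrightarrow\mathbb H_{\mathbf n}$ and $\mathcal H_{\mathbf n-\mathbf i}\otimes\mathcal H_{\mathbf i}\hookrightarrow\mathcal H_{\mathbf n}$, since the roots and cocharacters of the Levi are subsets of those of the ambient datum.

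Next, I would decompose $\pi$ according to generalized $\mathbb H_{\mathbf i}$-central characters,
\[
\pi \;=\; \bigoplus_{S_{\mathbf i}\zeta'}\pi_{[S_{\mathbf i}\zeta']},
\]
where $\zeta'$ ranges over the $S_{\mathbf i}$-orbits appearing in the tail ends of $S_{\mathbf n}\zeta$; each such $\zeta'$ is automatically real for $R_{\mathbf i}$ since $\zeta$ is real for $R_{\mathbf n}$. Applying $\Lambda$ and the first step, this produces a parallel decomposition
\[
\Lambda(\pi) \;=\; \bigoplus_{S_{\mathbf i}t'}\Lambda(\pi)_{[S_{\mathbf i}t']}, \qquad t'=\exp(\zeta'),
\]
and each summand is identified with $\Lambda(\pi_{[S_{\mathbf i}\zeta']})$ as an $\mathcal H_{\mathbf n-\mathbf i}\otimes\mathcal H_{\mathbf i}$-module.

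On each such summand, Corollary \ref{cor equality sign }, applied inside $\mathbb H_{\mathbf i}$, provides an invertible element $a\in\widehat{\mathbb A}_{\mathbf i}$ such that multiplication by $a$ yields a natural linear isomorphism
\[
\mathbf S_{\mathbf i}^{\mathbf n}\bigl(\Lambda(\pi_{[S_{\mathbf i}\zeta']})\bigr)\;\cong\;\Lambda\bigl(\mathbf s_{\mathbf i}^{\mathbf n}(\pi_{[S_{\mathbf i}\zeta']})\bigr).
\]
Because $a$ sits inside the tensor factor $\widehat{\mathbb H}_{\mathbf i}$, it commutes with $\mathbb H_{\mathbf n-\mathbf i}$ in $\mathbb H_{\mathbf n-\mathbf i}\otimes\mathbb H_{\mathbf i}$, so the isomorphism intertwines the $\mathcal H_{\mathbf n-\mathbf i}$-action. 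Summing over $S_{\mathbf i}\zeta'$ gives the asserted isomorphism. The only point that might have caused trouble — verifying that the intertwining element produced by Corollary \ref{cor equality sign } really lies in the correct tensor factor so as to commute with $\mathbb H_{\mathbf n-\mathbf i}$ — is built into Proposition \ref{prop translate sign}, so no new obstacle arises beyond the one already dealt with in Theorem \ref{thm real bzder}.
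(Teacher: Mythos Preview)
Your proposal is correct and follows exactly the approach the paper indicates: the paper's proof of Theorem \ref{thm bz derivative graded case} is simply the statement ``proved in the same way as Theorem \ref{thm real bzder},'' and you have spelled out precisely that argument with $(\mathbf n,\mathbf i)$ in place of $(n,i)$. The three ingredients you isolate---compatibility of $\Lambda$ with Levi restriction, the $\mathbb H_{\mathbf i}$-central character decomposition, and Corollary \ref{cor equality sign } applied in the $\mathbb H_{\mathbf i}$ factor---are exactly what the paper uses.
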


\section{Bernstein-Zelevinsky derivatives of Speh representations} \label{s bz speh}

\subsection{Speh modules}

Speh representations of $p$-adic groups were studied extensively by Tadi\'c as a part of studying the unitary dual. We recall the definition of 
(generalized) Speh representations. Let $\bar{n}$ be a partition of $n$, write $\bar{n}^t= (e_1, \ldots ,e_f)$, $e_1\geq \ldots \geq e_f$,  where $t$ is the transpose. 
Let $\mathrm{St}_{e_k}$ be the Steinberg representation of $GL(e_k,F)$ and let $\mathrm{St}_{e_k}'=\nu^{-\frac{e_k-1}{2}}\mathrm{St}_{e_k}$ be a twist of $\mathrm{St}_{e_k}$, where $\nu(g)=|\mathrm{det}(g)|_F$. Let $P_{\bar{n}}$ be the standard parabolic subgroup associated to the partition $\bar{n}^t$. Let $\rho(g)=|\mathrm{det}(g)|_F^r$ for some complex number $r$. The unique quotient of the induced representation 
\[ 
\pi_{(\bar{n},\rho)}=\mathrm{Ind}_{P_{\bar{n}}}^{GL(n,F)} (\rho\mathrm{St}_{e_1}' \boxtimes \rho\nu\mathrm{St}_{e_2}' \cdots \boxtimes \rho\nu^{f-1}\mathrm{St}_{e_f}')
\] 
 is the generalized Speh representation associated to ($\bar{n}, \rho$).  If $e_1=e_2=\ldots =e_f$ then $\pi_{\bar{n}}$ is a Speh representation. 
 
%Now we can label the partition in the way that for the $l$-th row, we fill in the numbers $\kappa+l-1, \kappa+l-2, \ldots, \kappa+l-c_k$. For example, $\kappa=1$ with the partition $(3,2,1)^t=(3,2,1)$, then the rows are accordingly are $(1,2,3),(0,1), (-1)$. We call the pair $(\bar{n},\kappa)$ with the above labelling to be a labelled partition of $n$.

Under the Borel-Casselman equivalence, generalized Speh representations correspond to $\mathcal H_n$-modules with single $\mathcal H_{S_n}$-type (see \cite{BC}, \cite{BM2}, \cite{CM}). 
Since these $\mathcal H_n$-modules have real infinitesimal character, we can look at the corresponding modules for the graded algebra $\mathbb{H}_n$. 
 They can be intrinsically constructed as follows. For $\kappa=-r \log q$, we have the following Jucys-Murphy elements: for $k=2,\ldots, n$,
\begin{align} \label{eqn jucys murphy}
 JM_k:= - p( t_{s_{1,k}}+\dots + t_{s_{k-1,k}}) +\kappa
\end{align}
and $JM_1=\kappa$, where $p=\log q$. 
It is straightforward to check that the maps $\epsilon_k \mapsto JM_k$ and $t_{w} \mapsto t_w$ define an algebra homomorphism from $\mathbb{H}_n$ to $\mathbb{C}[S_n]$. Let $\sigma_{\bar{n}}$ be the irreducible $\mathbb{C}[S_n]$-module corresponding to $\bar{n}$. For example, the partition $(n)$ defines the trivial representation while $(1,\ldots, 1)$ defines the sign representation.
 Let $\sigma_{(\bar{n},\kappa)}$ be the $\mathbb{H}$-module pulled back from $\sigma_{\bar{n}}$ via the map defined above, where $JM_k$ depends on $\kappa$. 
 This is the  {\it generalized Speh module} associated to $(\bar{n},\kappa)$. The module $\sigma_{(\bar{n},\kappa)}$ corresponds to $\pi_{(\bar{n},\rho)}$ under the 
 Borel-Casselman equivalence and the Lusztig equivalence in Theorem \ref{thm equiv cat}.

Recall that $\mathbf{gBZ}_i(\pi)$ is the $i$-the Bernstein-Zelevinsky derivative of an $\mathbb{H}_n$-module $\pi$.

\begin{lemma} \label{lem restrict speh}
Let $\pi$ be the generalized Speh $\mathbb H_n$-module associated to the datum $(\bar{n},\kappa)$. 
Then $\mathbf{gBZ}_{i}(\pi)$ is a direct sum of generalized Speh $\mathbb H_{n-i}$-modules. 
Moreover, $\epsilon_1$ acts by the constant $\kappa$ on each direct summand of $\mathbf{gBZ}_{i}(\pi)$.
\end{lemma}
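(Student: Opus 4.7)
The plan is to reduce everything to classical branching for symmetric groups via the pullback construction of generalized Speh modules. Recall that $\pi = \sigma_{(\bar n,\kappa)}$ is obtained from the irreducible $\mathbb{C}[S_n]$-module $\sigma_{\bar n}$ via the algebra homomorphism $\Phi : \mathbb H_n \to \mathbb{C}[S_n]$ sending $t_w \mapsto t_w$ and $\epsilon_k \mapsto JM_k$. Under $\Phi$ the sign projector $\mathbf s_i^n = 1 \otimes \mathbf s_i$ maps (up to a nonzero scalar) to the idempotent for the sign representation of the subgroup $S_i \subset S_n$ acting on the last $i$ positions; consequently $\mathbf{gBZ}_i(\pi)$ may be identified, as a linear space, with the $\sgn$-isotypic component of $\sigma_{\bar n}|_{S_{n-i} \times S_i}$ under the $S_i$-factor.

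Next, I would invoke the case of the Littlewood--Richardson rule in which the $S_i$-factor is the sign representation, equivalently Pieri's rule for vertical strips:
\[
\Hom_{S_i}\bigl(\sgn,\,\Res^{S_n}_{S_{n-i}\times S_i}\sigma_{\bar n}\bigr) \;\cong\; \bigoplus_{\bar m} \sigma_{\bar m},
\]
where $\bar m$ runs over partitions of $n-i$ obtained from $\bar n$ by removing $i$ boxes, at most one in each row. This produces the expected indexing set, but so far only as an isomorphism of $\mathbb{C}[S_{n-i}]$-modules.

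To upgrade this to an isomorphism of $\mathbb H_{n-i}$-modules I would verify that the composition $\mathbb H_{n-i}\hookrightarrow \mathbb H_n \xrightarrow{\Phi} \mathbb{C}[S_n]$ agrees with the composition of the analogous Speh construction map $\Phi_{n-i}:\mathbb H_{n-i}\to \mathbb{C}[S_{n-i}]$ with the natural inclusion $\mathbb{C}[S_{n-i}]\hookrightarrow \mathbb{C}[S_n]$. This reduces to two explicit checks: for $w\in S_{n-i}$ the generator $t_w$ is sent to the permutation $w\times 1\in S_n$, and for $1\leq k\leq n-i$ the element $\epsilon_k$ is sent to $JM_k = -p\sum_{l<k} t_{s_{l,k}} + \kappa$, whose transpositions all lie in $S_{n-i}$ since $k\leq n-i$. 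Hence each summand $\sigma_{\bar m}$ above is literally the generalized Speh $\mathbb H_{n-i}$-module $\sigma_{(\bar m,\kappa)}$, as required.

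The final assertion is immediate, since $\Phi(\epsilon_1)=JM_1=\kappa$ acts as the scalar $\kappa$ on all of $\pi$, hence on every subspace, in particular on each summand of $\mathbf{gBZ}_i(\pi)$. I do not anticipate a real obstacle; the only bookkeeping is keeping the conventions straight so that the embedded $\mathbb H_{n-i}$-factor lives on the first $n-i$ coordinates while $\mathbf s_i^n$ projects on the last $i$, after which Pieri's rule supplies the decomposition without further computation.
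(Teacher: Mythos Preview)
Your argument is correct and is essentially the approach the paper takes, only spelled out in more detail: the paper's proof is a one-liner pointing to the Jucys--Murphy construction and semisimplicity of $\mathbb{C}[S_{n-i}]$, and your key observation---that the composite $\mathbb{H}_{n-i}\hookrightarrow\mathbb{H}_n\xrightarrow{\Phi}\mathbb{C}[S_n]$ lands in $\mathbb{C}[S_{n-i}]$ and agrees with $\Phi_{n-i}$---is exactly what that one-liner unpacks to. Note that the Pieri step you include is more than the lemma itself requires: the lemma only asserts that $\mathbf{gBZ}_i(\pi)$ is \emph{some} direct sum of generalized Speh modules with parameter $\kappa$, which follows already from the factorization through $\Phi_{n-i}$ and semisimplicity; the explicit identification of the summands via Pieri is the content of the subsequent corollary.
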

\begin{proof}
This follows from the construction of generalized Speh modules (see e.g. (\ref{eqn jucys murphy})) and the fact that the category of $\mathbb{C}[S_n]$-modules is semisimple.
\end{proof}

We now recover a result of Lapid-M\'inguez (for the case of generalized Speh modules).

\begin{corollary} \label{cor bz speh}
Let $\pi$ be a generalized Speh representation of $GL(n,F)$ associated to $(\bar{n}, \rho)$. Then $\pi^{(i)}$ is the direct sum of generalized Speh modules associated to 
$(\bar{n}',\rho)$, where $\bar{n}'$ runs for all the partitions obtained by removing $i$ boxes from $\bar{n}$ with at most one in each row such that the resulting diagram is still a Young diagram.

\end{corollary}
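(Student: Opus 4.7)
The plan is to transport the question from smooth representations of $GL(n-i,F)$ to the graded Hecke algebra $\mathbb H_{n-i}$, where the single-$S_n$-type description of generalized Speh modules makes the combinatorics transparent. First, Theorem \ref{thm bz aha} identifies $(\pi_{(\bar n,\rho)}^{(i)})^{I_{n-i}}$ with the $\mathcal H_{n-i}$-module $\mathbf{BZ}_i(\pi_{(\bar n,\rho)}^{I_n})$. Since generalized Speh representations have real central character (their $\mathbb H_n$-parameters are contents of boxes shifted by $\kappa$, hence real), Theorem \ref{thm real bzder} further identifies this $\mathcal H_{n-i}$-module with $\Lambda(\mathbf{gBZ}_i(\sigma_{(\bar n,\kappa)}))$. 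Because $\pi^{(i)}$ is generated by its Iwahori-fixed vectors whenever $\pi$ is (Section \ref{ss bz der hn}), the Borel-Casselman equivalence reduces the corollary to determining the decomposition of $\mathbf{gBZ}_i(\sigma_{(\bar n,\kappa)})$ as a direct sum of generalized Speh $\mathbb H_{n-i}$-modules.

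Lemma \ref{lem restrict speh} already asserts that $\mathbf{gBZ}_i(\sigma_{(\bar n,\kappa)})$ is a sum $\bigoplus_j \sigma_{(\bar n'_j,\kappa)}$ with the same parameter $\kappa$; what remains is to identify the partitions $\bar n'_j$ (with multiplicity). The key observation is the following compatibility between the Jucys--Murphy map $\mathbb H_n\to\mathbb C[S_n]$ and the Levi embedding $\mathbb H_{n-i}\otimes \mathbb H_i\hookrightarrow \mathbb H_n$: by the formula (\ref{eqn jucys murphy}), $JM_k$ for $k\leq n-i$ involves only transpositions $s_{j,k}$ with $j<k\leq n-i$, hence lies in $\mathbb C[S_{n-i}]$, so the composed homomorphism $\mathbb H_{n-i}\to \mathbb C[S_n]$ actually factors through $\mathbb C[S_{n-i}]$ and is the Jucys--Murphy map of $\mathbb H_{n-i}$. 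On the other hand, $\mathbf s_i\in \mathbb H_i$ is by definition supported on $t_w$ for $w\in S_i$, and its image $\mathbf s_i^n$ in $\mathbb H_n$ maps to the sign projector of the Young subgroup $S_i\subset S_n$ acting on the last $i$ indices. Combining the two, $\mathbf{gBZ}_i(\sigma_{(\bar n,\kappa)})$ is, as an $\mathbb H_{n-i}$-module, the Jucys--Murphy pull-back of the $\mathrm{sgn}_{S_i}$-isotypic component of $\sigma_{\bar n}\downarrow^{S_n}_{S_{n-i}\times S_i}$.

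The combinatorial step is then the classical Pieri rule: the multiplicity of $\sigma_{\bar m}\boxtimes \mathrm{sgn}_{S_i}$ in the restriction equals the Littlewood--Richardson coefficient $c^{\bar n}_{\bar m,(1^i)}$, which is $1$ if $\bar n/\bar m$ is a vertical strip of size $i$ (i.e.\ $\bar m$ is obtained from $\bar n$ by removing $i$ boxes with at most one in each row so that the result is still a Young diagram) and $0$ otherwise. Pulling back through Jucys--Murphy and applying $\Lambda$ together with Borel--Casselman yields the claimed decomposition of $\pi^{(i)}$. The main technical step I expect will require care is the compatibility assertion on the $\mathbb H_{n-i}$-factor: one must verify that the restriction of the pull-back action along $\mathbb H_{n-i}\hookrightarrow \mathbb H_n$ coincides with the intrinsic pull-back action of $\mathbb H_{n-i}$ through its own Jucys--Murphy map, a point that ultimately reduces to reading off (\ref{eqn jucys murphy}). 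Once this is in hand, the remainder of the argument is a direct appeal to symmetric-group branching.
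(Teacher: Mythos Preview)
Your proposal is correct and follows essentially the same route as the paper: reduce via Theorem~\ref{thm bz aha} and Theorem~\ref{thm real bzder} to the graded side, invoke Lemma~\ref{lem restrict speh}, and then read off the $S_{n-i}$-structure from Pieri's rule. The compatibility of the Jucys--Murphy map with the Levi embedding that you spell out explicitly is exactly what underlies the one-line proof of Lemma~\ref{lem restrict speh} in the paper, so you are filling in detail the paper leaves implicit rather than arguing differently.
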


\begin{proof}
Since $\Lambda(\sigma_{\bar{n},\kappa})=\pi^{I_n}$ 
  it suffices to compute $\mathbf{gBZ}_i(\sigma_{\bar{n},\kappa})$ by Theorem \ref{thm real bzder}. 
From the observation in Lemma \ref{lem restrict speh}, it suffices to determine the $\mathbb{C}[S_{n-i}]$-module structure of $\mathbf{gBZ}_i(\sigma_{\bar{n},\kappa})$, 
 and this follows from a special case of the Littlewood-Richardson rule (or the Pieri's formula). 
\end{proof}

Generalized Speh modules form a subclass of ladder representations defined by Lapid-M\'inguez \cite{LM}. Bernstein-Zelevinsky derivatives of ladder representations are 
computed there using a determinantal formula of Tadi\'c.

\section{branching rules and Locally nice representations} \label{s ext branch}

\subsection{Bernstein-Zelevinsky filtration}
 Let $E_n$ be the mirabolic subgroup of $GL(n+1,F)$ i.e. the subgroup of all matrices of the form $\begin{pmatrix} g & v \\ 0 &1 \end{pmatrix}$, where $g \in GL(n,F)$ and $v \in M_{n \times 1}$. For $i=1,\ldots, n+1$ let 
 \[ 
 R_i=\left\{ \begin{pmatrix} g & v \\ 0 & u \end{pmatrix} : g \in GL(n+1-i,F), v \in M_{n+1-i,i}, u \in U_{i} \right\}.
 \]  
We recall a result of Bernstein-Zelevinsky:

\begin{theorem}\label{thm bz filtration}
Let $(\pi,X)$ be a smooth representation of $GL(n+1,F)$. Then, as a representation of $E_n$, $\pi$ admits a filtration
\begin{equation} \label{eqn bz filtration}
  0 =X_{n+1} \subset X_n  \subset \ldots \subset X_1 \subset X_{0}=X 
\end{equation}
such that for $i=1,\ldots, n+1$
\[   X_{i-1}/X_{i} \cong \mathrm{ind}_{R_{i}}^{E_{n}}(\pi^{(i)} \boxtimes \psi_{i} ),
\]
$\pi^{(i)}$ is the $i$-th Bernstein-Zelevinsky derivative,  and $\psi_{i}$ is the Whittaker character for $U_{i}$. 
\end{theorem}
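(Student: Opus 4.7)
The plan is to use the Bernstein-Zelevinsky functors on the tower of mirabolic subgroups $E_n \supset E_{n-1} \supset \ldots$, together with the fundamental short exact sequence arising from the Fourier decomposition of a smooth module under an abelian normal subgroup whose dual splits into exactly two orbits.

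Concretely, I would first introduce exact functors $\Phi^\pm$, $\Psi^\pm$ as in \cite{BZ}. Writing $P_k$ for the mirabolic subgroup of $GL(k,F)$ (so $E_n = P_{n+1}$), we have $\Phi^{-}:\mathrm{Alg}(P_k)\to \mathrm{Alg}(P_{k-1})$, the twisted Jacquet functor along a codimension-one subgroup of the column $V_k\cong F^{k-1}$ with respect to a generic character whose stabilizer in $GL(k-1,F)$ is $P_{k-1}$; $\Psi^{-}:\mathrm{Alg}(P_k)\to \mathrm{Alg}(GL(k-1,F))$, the untwisted Jacquet functor along all of $V_k$; and their adjoints $\Phi^{+},\Psi^{+}$ given by compact induction and by pullback through the quotient map, respectively. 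The critical input is that $V_k$ is an abelian normal subgroup of $P_k$ whose Pontryagin dual has exactly two $GL(k-1,F)$-orbits, producing a natural short exact sequence
\[
0 \to \Phi^{+}\Phi^{-}(\tau) \to \tau \to \Psi^{+}\Psi^{-}(\tau) \to 0
\]
for every smooth $P_k$-module $\tau$.

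Next I would build the filtration by iteration. Set $X_i := (\Phi^{+})^{i}(\Phi^{-})^{i}(\pi|_{E_n})$, so that $X_0=\pi|_{E_n}$ and the canonical map $X_i\to X_{i-1}$ is an inclusion by the SES above. Applying the exact functor $(\Phi^{+})^{i-1}$ to the fundamental SES for $(\Phi^{-})^{i-1}(\pi|_{E_n})$ gives
\[
0 \to X_i \to X_{i-1} \to (\Phi^{+})^{i-1}\Psi^{+}\Psi^{-}(\Phi^{-})^{i-1}(\pi) \to 0,
\]
and the rightmost term equals $(\Phi^{+})^{i-1}\Psi^{+}(\pi^{(i)})$ under the identification $\pi^{(i)} = \Psi^{-}(\Phi^{-})^{i-1}(\pi)$, which matches the formulation in Section \ref{ss ZD} by transitivity of the Jacquet functor.

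It remains to identify $(\Phi^{+})^{i-1}\Psi^{+}(\sigma)\cong \mathrm{ind}_{R_i}^{E_n}(\sigma \boxtimes \psi_i)$ for any smooth $GL(n+1-i,F)$-module $\sigma$. I would do this by induction by stages: each application of $\Phi^{+}$ compactly inducts across one additional column $V_k$ while extending by one further Whittaker component, and $\Psi^{+}$ contributes a trivial extension across the leftmost column. Unwinding, the inducing subgroup that is built up is $GL(n+1-i,F)\cdot V_{n+2-i}\cdots V_{n+1}$, which coincides with $R_i$ by a direct inspection of block structure, and the accumulated character on the unipotent piece $V_{n+3-i}\cdots V_{n+1}=U_i$ agrees with $\psi_i$. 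Termination $X_{n+1}=0$ holds because after $n+1$ successive twisted Jacquet operations the resulting smooth module must vanish.

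The main obstacle will be the bookkeeping in this last step: carefully matching the tower of compact-induction subgroups produced by $(\Phi^{+})^{i-1}\Psi^{+}$ with $R_i$, and verifying that the cumulative character agrees with $\psi_i$. This is a routine but delicate Mackey-theoretic check that hinges on making compatible choices of the codimension-one column subgroups and their nontrivial characters used to define $\Phi^{\pm}$ at each level.
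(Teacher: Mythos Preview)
The paper does not prove this statement at all: it is introduced with ``We recall a result of Bernstein--Zelevinsky'' and simply cited from \cite{BZ}. Your proposal reproduces the original Bernstein--Zelevinsky argument via the mirabolic functors $\Phi^{\pm},\Psi^{\pm}$ and the fundamental short exact sequence, which is the standard and correct proof.

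One small correction to your bookkeeping: the equality $V_{n+3-i}\cdots V_{n+1}=U_i$ is not literally true as sets, since each column $V_k$ for $k\ge n+3-i$ also contributes entries in rows $1,\ldots,n+1-i$, which lie in the off-diagonal block $M_{n+1-i,i}$ rather than in $U_i$. What is true (and what you need) is that the accumulated character is supported only on the superdiagonal entries $(k-1,k)$ for $k=n+3-i,\ldots,n+1$, hence factors through $U_i$ as $\psi_i$ and is trivial on the off-diagonal block; together with the trivial extension across $V_{n+2-i}$ coming from $\Psi^{+}$, this identifies the inducing data with $(\sigma\boxtimes\psi_i)$ on $R_i$. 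This is exactly the ``routine but delicate'' check you anticipated, and once stated correctly it goes through.
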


We abbreviate $G_n=GL(n,F)$ etc. 
Since $E_n = G_n R_i$, any element in $\mathrm{ind}_{R_i}^{E_n}(\pi^{(i)} \boxtimes \psi_{i})$ is determined by its restriction to $G_n$. 
 Hence, for $i \geq 1$, the restriction of functions  defines an isomorphism of $G_n$-modules, 
 \[ 
 \mathrm{ind}_{R_i}^{E_n}(\pi^{(i)} \boxtimes \psi_{i})\cong \mathrm{ind}_{Q_i}^{G_n} (\nu^{\frac{1}{2}}\pi^{(i)} \boxtimes \psi_{i-1}),
 \]  where $Q_i=R_i\cap G_n$ and  $\nu(g)=|\mathrm{det}(g)|_F$. 

Let $P_i=M_i N_i$ be the maximal parabolic consisting of block upper triangular matrices in $G_n$  with the Levi factor $M_i=G_{n+1-i} \times G_{i-1}$ of block diagonal 
matrices. In particular, $P_i$ contains $Q_i$.  Fix an embedding of $\mathcal H_{n+1-i}\otimes \mathcal H_{i-1}$ into $\mathcal H_n$ such that the restriction functor from 
the category of $\mathcal H_n$-modules to the category of $\mathcal H_{n+1-i}\otimes \mathcal H_{i-1}$-modules corresponds, in the category of representations of $G_n$ 
generated by Iwahori-fixed vectors,  to the Jacquet functor with respect to the parabolic 
opposite to $P_i$.  (Note that this is not the same embedding as in Section \ref{s BZ afa}.)  Now there are two ways to construct the right adjoint of the restriction functor. One way is tensoring by $\mathcal H_n$ and the other, 
 by the second adjointness theorem of Bernstein, is the parabolic induction from $P_i$ to $G_n$. Hence, if $\sigma$ is a smooth 
representations of $M_i$, then, by the Yoneda lemma, we have a natural isomorphism of $\mathcal H_n$-modules 
\[ 
 \mathrm{Ind}_{M_{i}}^{G_n}(\sigma)^{I_n} \cong \mathcal H_n \otimes_{(\mathcal H_{n+1-i} \otimes  \mathcal H_{i-1})} (\sigma^{I_{M_i}}). 
 \] 
 
%Let $P_{\mathrm{sgn}}^{k}=\mathcal H_k \otimes_{\mathcal H_{S_k}} \mathrm{sgn}$ (as an $\mathcal H_k$-module).

\begin{lemma} \label{lem realize} Let $P_{\mathrm{sgn}}^{i-1}=\mathcal H_{i-1} \otimes_{\mathcal H_{S_{i-1}}} \mathrm{sgn}$. 
The $\mathcal H_n$-module $(\mathrm{ind}_{Q_i}^{G_n}(\nu^{\frac{1}{2}}\pi^{(i)} \boxtimes \psi_{i-1}) )^{I_n}$ is isomorphic to 
\[  \mathcal H_n \otimes_{(\mathcal H_{n+1-i} \otimes  \mathcal H_{i-1})} ( (\nu^{\frac{1}{2}}\pi^{(i)})^{I_{n+1-i}} \boxtimes P_{\mathrm{sgn}}^{i-1} ).
\]
% (c.f. Lemma \ref{lem ind functor}).
\end{lemma}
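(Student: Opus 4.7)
The plan is to carry out induction in stages through the maximal parabolic $P_i = M_i N_i$, apply the Yoneda-style isomorphism stated just before the lemma, and finish by identifying the second tensor factor via Corollary \ref{cor realize iwahoir fix vector}. First I would write $Q_i = (G_{n+1-i}\times U_{i-1})\cdot N_i$ and observe that $Q_i$ contains the unipotent radical $N_i$ of $P_i$, while the representation $\nu^{1/2}\pi^{(i)}\boxtimes \psi_{i-1}$ is trivial on $N_i$. Because $P_i\backslash G_n$ is compact, normalized compact induction from $P_i$ agrees with normalized parabolic induction $\mathrm{Ind}_{P_i}^{G_n}$, and a standard induction-in-stages calculation would give
\[
\mathrm{ind}_{Q_i}^{G_n}\bigl(\nu^{1/2}\pi^{(i)}\boxtimes\psi_{i-1}\bigr) \;\cong\; \mathrm{Ind}_{P_i}^{G_n}\Bigl(\nu^{1/2}\pi^{(i)}\boxtimes \mathrm{ind}_{U_{i-1}}^{G_{i-1}}\psi_{i-1}\Bigr),
\]
where the right-hand side is understood as the normalized parabolic induction of the indicated $M_i$-representation, extended trivially across $N_i$.

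Next, I would take $I_n$-fixed vectors and invoke the isomorphism $\mathrm{Ind}_{P_i}^{G_n}(\sigma)^{I_n}\cong \mathcal{H}_n\otimes_{\mathcal{H}_{n+1-i}\otimes \mathcal{H}_{i-1}} \sigma^{I_{M_i}}$ recalled immediately before the lemma, applied to $\sigma=\nu^{1/2}\pi^{(i)}\boxtimes \mathrm{ind}_{U_{i-1}}^{G_{i-1}}\psi_{i-1}$. Since $I_{M_i}= I_{n+1-i}\times I_{i-1}$, the $I_{M_i}$-fixed vectors of $\sigma$ factor through the exterior tensor product, giving
\[
\sigma^{I_{M_i}} \;\cong\; \bigl(\nu^{1/2}\pi^{(i)}\bigr)^{I_{n+1-i}} \boxtimes \bigl(\mathrm{ind}_{U_{i-1}}^{G_{i-1}}\psi_{i-1}\bigr)^{I_{i-1}}.
\]
Corollary \ref{cor realize iwahoir fix vector} applied to $G_{i-1}$ with its Whittaker character $\psi_{i-1}$ identifies the second factor with $\mathcal{H}_{i-1}\otimes_{\mathcal{H}_{S_{i-1}}}\mathrm{sgn}=P_{\mathrm{sgn}}^{i-1}$, and splicing these identifications together yields the claimed $\mathcal{H}_n$-module isomorphism.

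The main point requiring care is the induction-in-stages step: one must check that the modular-character bookkeeping, comparing $\delta_{Q_i}$, $\delta_{M_i\cap Q_i}$ inside $M_i$, and $\delta_{P_i}|_{Q_i}$, produces no extraneous twist, so that the explicit $\nu^{1/2}$ factor on $\pi^{(i)}$ is preserved on the right-hand side. All remaining steps are direct applications of results already established in the excerpt, so once this bookkeeping is completed the lemma follows.
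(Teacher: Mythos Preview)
Your proposal is correct and follows essentially the same route as the paper: transitivity of induction through $G_n \supset P_i \supset Q_i$ to obtain $\mathrm{ind}_{Q_i}^{G_n}(\nu^{1/2}\pi^{(i)}\boxtimes\psi_{i-1}) \cong \mathrm{Ind}_{M_i}^{G_n}(\nu^{1/2}\pi^{(i)}\boxtimes \mathrm{ind}_{U_{i-1}}^{G_{i-1}}\psi_{i-1})$, then taking Iwahori-fixed vectors and applying Corollary~\ref{cor realize iwahoir fix vector}. The paper's proof is in fact terser than yours and does not spell out the modular-character bookkeeping you flag, so your caution there is appropriate but the argument is the same.
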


\begin{proof}
By the transitivity of inductions, since $G_n \supset P_i \supset Q_i$, 
\[ 
\mathrm{ind}_{Q_i}^{G_n}(\nu^{\frac{1}{2}}\pi^{(i)} \boxtimes \psi_{i-1}) \cong
 \mathrm{Ind}_{M_{i}}^{G_n} \left( \nu^{\frac{1}{2}}\pi^{(i)} \boxtimes \mathrm{ind}_{U_{i-1}}^{G_{i-1} } \psi_{i-1} \right).
\] 
 Lemma follows by taking Iwahori-fixed vectors and using Corollary \ref{cor realize iwahoir fix vector}. 
\end{proof}

Lemma \ref{lem realize} implies the following: 

\begin{corollary} \label{cor fintie gen}
Let $\pi$ be an irreducible generic representation of $GL(n+1,F)$. Then $\pi^{I_n}$ is a finitely generated $\mathcal H_n$-module.
\end{corollary}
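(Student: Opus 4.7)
The plan is to descend the Bernstein-Zelevinsky filtration of Theorem \ref{thm bz filtration} to the level of $I_n$-fixed vectors. Since $I_n \subset G_n \subset E_n$, and since the functor of taking $I_n$-fixed vectors is exact on smooth representations (as $I_n$ is pro-$p$ and we are in characteristic zero), applying this functor to the $E_n$-filtration
\[
0 = X_{n+1}\subset X_n \subset \cdots \subset X_1 \subset X_0 = X
\]
yields a filtration of length $n+1$ of the $\mathcal H_n$-module $\pi^{I_n}$. Using the $G_n$-isomorphism $\mathrm{ind}_{R_i}^{E_n}(\pi^{(i)}\boxtimes \psi_i)\cong \mathrm{ind}_{Q_i}^{G_n}(\nu^{1/2}\pi^{(i)}\boxtimes \psi_{i-1})$ noted before Lemma \ref{lem realize}, and then invoking Lemma \ref{lem realize} itself, the $i$-th successive quotient is identified as an $\mathcal H_n$-module with
\[
\mathcal H_n \otimes_{(\mathcal H_{n+1-i}\otimes \mathcal H_{i-1})} \left((\nu^{1/2}\pi^{(i)})^{I_{n+1-i}}\boxtimes P^{i-1}_{\mathrm{sgn}}\right).
\]

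It remains to check that each of these successive quotients is finitely generated as an $\mathcal H_n$-module. Since $\pi$ is irreducible (hence admissible), each Bernstein-Zelevinsky derivative $\pi^{(i)}$ is admissible of finite length, a classical consequence of the results of Bernstein-Zelevinsky \cite{BZ}; in particular $(\pi^{(i)})^{I_{n+1-i}}$ is finite-dimensional over $\mathbb{C}$, and a fortiori finitely generated as an $\mathcal H_{n+1-i}$-module. On the other side, $P^{i-1}_{\mathrm{sgn}} = \mathcal H_{i-1}\otimes_{\mathcal H_{S_{i-1}}}\mathrm{sgn}$ is cyclic over $\mathcal H_{i-1}$, generated by $1\otimes 1$. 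Therefore the exterior tensor product is finitely generated over $\mathcal H_{n+1-i}\otimes \mathcal H_{i-1}$, and tensoring up to $\mathcal H_n$ preserves finite generation.

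Finally, a module admitting a finite filtration whose successive quotients are finitely generated is itself finitely generated, so $\pi^{I_n}$ is a finitely generated $\mathcal H_n$-module. The only non-bookkeeping step is the appeal to admissibility of $\pi^{(i)}$, but this is standard for $GL_m$ and could alternatively be replaced by the observation that $\pi^{(i)}$ is itself finitely generated over $G_{n+1-i}$ (since $\pi$ is), so by an Iwahori-version of the fact that the $I$-fixed vectors of a finitely generated smooth representation form a finitely generated Hecke module, one still concludes finite generation of $(\pi^{(i)})^{I_{n+1-i}}$ over $\mathcal H_{n+1-i}$.
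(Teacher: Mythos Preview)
Your argument is correct and is exactly the approach the paper intends: it simply records that the corollary follows from Lemma \ref{lem realize}, and you have spelled out the details (exactness of $(-)^{I_n}$, identification of the graded pieces, finite generation of each piece, and passage through a finite filtration) in the way one would expect.

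One small inaccuracy: $I_n$ is not pro-$p$ (it surjects onto $\bar B(\mathcal O/\mathfrak p)$, which has order divisible by $q-1$). The exactness of $(-)^{I_n}$ on smooth representations in characteristic zero holds for any compact open subgroup, via the averaging idempotent $e_{I_n}=\mathrm{vol}(I_n)^{-1}\mathrm{ch}_{I_n}$; no pro-$p$ hypothesis is needed. With that correction your proof stands as written.
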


\subsection{Locally nice representations}

We use the notations in Sections \ref{s BZ afa} and \ref{s bz gha}. This section does not directly use the realization of the Bernstein-Zelevinsky derivative via the Iwahori-Hecke algebras, but it is motivated by the Bernstein-Zelevinsky composition factors. The sign character plays a role in a number of places.

We first define a certain class of representations below. Since we only deal with Iwahori-fixed vector cases, it is more convenient to formulate the notions related to affine Hecke algebras. 
\begin{definition} \label{def relative generic}
Let $\pi_1$ be an irreducible generic representation of $GL(n+1,F)$. Let $\mathcal J$ be a maximal ideal of $\mathcal Z_n$.
 We say that $\pi_1$ is {\it locally nice} at $\mathcal J$ if the only irreducible representation $\pi_2$ of $GL(n,F)$ (with Iwahori-fixed vectors) satisfying the conditions that 
\begin{enumerate}
\item  $\mathrm{Hom}_{GL(n,F)}(\pi_1, \pi_2)\neq 0$ , and
\item $\pi_2^{I_n}$ is annihilated by $\mathcal J$,
\end{enumerate}
is the unique irreducible generic representation annihilated by $\mathcal J$.
\end{definition}

Examples for Definition \ref{def relative generic} are given below. Classifying locally nice representations is a $\mathrm{Hom}$-restriction problem.

\begin{example} \label{ex relatively generic} 
Let $\mathcal J$ be such that there exists only one isomorphism class of irreducible representations annihilated  by $\mathcal J$. This happens if the 
the irreducible generic representation of $GL(n,F)$ is also spherical, (see e.g. \cite{BM}, \cite{Re}). 
Then any generic representation of $GL(n+1,F)$ is locally nice at $\mathcal J$. 

\end{example}

We state some results useful in proving Theorem \ref{thm local structure}. 

\begin{theorem} \label{thm multi one} (see \cite{Pr, Pr2, AGRS})
Let $\pi_1$ be an irreducible generic representation of $GL(n+1,F)$ and let $\pi_2$ be an irreducible generic representation of $GL(n,F)$. Then
\[  \mathrm{Hom}_{GL(n,F)}(\pi_1, \pi_2) =1 
\]
\end{theorem}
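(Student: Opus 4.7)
The plan is to prove this in two steps: first the upper bound $\dim \mathrm{Hom}_{GL(n,F)}(\pi_1,\pi_2) \leq 1$, then the nonvanishing. The upper bound is the deep part; the nonvanishing follows from Rankin--Selberg theory.

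For the upper bound, I would follow the Gelfand--Kazhdan approach. Consider the involution $\sigma: g \mapsto g^{-T}$ (inverse transpose) on $GL(n+1,F)$; it preserves $GL(n,F)$ and satisfies $\pi^{\sigma} \cong \widetilde\pi$ for all irreducible $\pi$ of $GL(m,F)$ (Gelfand--Kazhdan). The central distributional statement to prove is:
\begin{quote}
\emph{Every distribution on $GL(n+1,F)$ that is invariant under $GL(n,F)$-conjugation is $\sigma$-invariant.}
\end{quote}
Once this is proved, a standard argument shows that for any smooth irreducible $\pi_1$ of $GL(n+1,F)$ and $\pi_2$ of $GL(n,F)$,
\[
\dim\mathrm{Hom}_{GL(n,F)}(\pi_1,\pi_2)\cdot \dim\mathrm{Hom}_{GL(n,F)}(\widetilde\pi_2,\widetilde\pi_1) \le 1,
\]
and by symmetry, each factor is at most $1$.

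To prove the distributional statement, I would use Bernstein's localization principle to reduce to a local problem at each closed $GL(n,F)$-orbit on $GL(n+1,F)$. The orbits can be enumerated via rational canonical form. At each orbit, Harish-Chandra descent identifies $GL(n,F)$-invariant distributions on a neighborhood with invariant distributions on the slice at the centralizer in the Lie algebra. The $\sigma$-invariance is then checked on the slice. The main obstacle -- and the core technical content of \cite{AGRS} -- is handling slices at points whose centralizer data involves nilpotent elements; these require a delicate induction argument on the dimension, using Fourier transform on the slice and a "geometric" nilpotent cone argument to show that anti-invariant distributions must vanish.

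For the nonvanishing, I would use the local Rankin--Selberg zeta integrals. Fixing Whittaker functionals $\lambda_1 \in \mathrm{Hom}_{U_{n+1}}(\pi_1, \psi)$ and $\lambda_2 \in \mathrm{Hom}_{U_n}(\pi_2, \psi^{-1})$, for $v_1 \in \pi_1$ and $v_2 \in \pi_2$ consider
\[
Z(s; v_1, v_2) = \int_{U_n \backslash GL(n,F)} \lambda_1\!\left(\pi_1\!\begin{pmatrix}g & 0\\ 0 & 1\end{pmatrix} v_1\right)\lambda_2(\pi_2(g)v_2)\,|\det g|^{s-1/2}\,dg.
\]
This integral converges for $\mathrm{Re}(s)\gg 0$, admits meromorphic continuation, and for a generic value of $s$ yields a nonzero element of $\mathrm{Hom}_{GL(n,F)}(\pi_1 \otimes \widetilde{\pi}_2, \mathbb{C})$, i.e.\ of $\mathrm{Hom}_{GL(n,F)}(\pi_1,\pi_2)$ after twisting by an unramified character. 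A short argument using the uniqueness of Whittaker models and the Kirillov model of $\pi_1|_{E_n}$ removes the dependence on $s$. Combined with the upper bound, this gives exactly $\dim=1$.
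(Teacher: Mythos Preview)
The paper does not prove this theorem at all; it is stated as a known result with a citation to \cite{Pr, Pr2, AGRS} and used as a black box in the proof of Theorem~\ref{thm local structure}. So there is no ``paper's own proof'' to compare against.

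Your outline is a faithful sketch of how the cited references establish the result. The upper bound is exactly the content of \cite{AGRS} (multiplicity one via the Gelfand--Kazhdan criterion and the invariant-distribution theorem on $GL(n+1)$ under $GL(n)$-conjugation), and the nonvanishing for generic $\pi_1,\pi_2$ is due to Jacquet--Piatetski-Shapiro--Shalika / Prasad via the local Rankin--Selberg zeta integral you wrote down. One small caution: the sentence ``a short argument using the uniqueness of Whittaker models and the Kirillov model of $\pi_1|_{E_n}$ removes the dependence on $s$'' is doing real work and deserves to be spelled out if you were actually writing a proof --- the usual route is to show that the fractional ideal generated by the $Z(s;v_1,v_2)$ is all of $\mathbb{C}[q^{\pm s}]$ times the local $L$-factor, hence nonzero at every $s$, in particular at $s=1/2$. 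But as a roadmap your proposal is correct and matches the literature the paper is citing.
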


\begin{lemma} \label{lem existence of whittaker space}
Let $\pi$ be an irreducible generic representation of $GL(n+1,F)$. Then $\pi|_{GL(n,F)}$ contains $\mathrm{ind}_{U_n}^{GL(n,F)} \psi_n$ as a submodule.
\end{lemma}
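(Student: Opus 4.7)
The plan is to read off the desired embedding as the smallest nonzero piece of the Bernstein--Zelevinsky filtration applied to $\pi|_{E_n}$, and then transport it down to $GL(n,F)$. First I would examine the bottom of the filtration in Theorem \ref{thm bz filtration}. Since $X_{n+1}=0$, the term $X_n$ is a genuine $E_n$-submodule of $\pi$, and
\[
X_n = X_n/X_{n+1} \cong \mathrm{ind}_{R_{n+1}}^{E_n}(\pi^{(n+1)} \boxtimes \psi_{n+1}).
\]
For $i=n+1$ the data degenerates: $R_{n+1}=U_{n+1}$ (the $GL(n{+}1{-}(n{+}1),F)$-block and the off-block entries are trivial), and $\pi^{(n+1)}$ is the $\psi_{n+1}$-twisted $U_{n+1}$-coinvariants of $\pi$, viewed as a representation of the trivial group. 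Since $\pi$ is generic, uniqueness of the Whittaker model forces $\pi^{(n+1)} \cong \mathbb{C}$. Hence $X_n \cong \mathrm{ind}_{U_{n+1}}^{E_n} \psi_{n+1}$ as $E_n$-modules, sitting inside $\pi$.

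Next I would restrict the inclusion $X_n \hookrightarrow \pi$ from $E_n$ down to $G_n=GL(n,F)$. Applying the $i=n+1$ instance of the $G_n$-isomorphism stated in the paragraph preceding Lemma \ref{lem realize}, together with the observations that $Q_{n+1} = R_{n+1}\cap G_n = U_n$ and that the $\nu^{1/2}$-twist is vacuous because $\pi^{(n+1)}$ lives over the trivial group, we obtain
\[
X_n\big|_{G_n} \cong \mathrm{ind}_{U_n}^{G_n} \psi_n.
\]
Equivalently one can invoke Mackey directly: writing $V \subset E_n$ for the abelian normal subgroup of column-vector unipotents, we have $E_n = G_n \ltimes V$ and $U_{n+1} = U_n \ltimes V$, so $E_n = G_n \cdot U_{n+1}$ is a single $(G_n, U_{n+1})$-double coset with $G_n \cap U_{n+1} = U_n$ and $\psi_{n+1}|_{U_n} = \psi_n$. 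Either way, since $X_n$ is an $E_n$-submodule of $\pi$, it is \textit{a fortiori} a $G_n$-submodule, yielding the desired embedding $\mathrm{ind}_{U_n}^{G_n} \psi_n \hookrightarrow \pi|_{G_n}$.

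The only care needed is to confirm that $X_n$ is the submodule end (not the quotient end) of the Bernstein--Zelevinsky filtration and to track the degenerate $i=n+1$ case correctly; no substantive obstacle arises.
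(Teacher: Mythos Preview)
Your proof is correct and follows exactly the approach the paper intends: it is simply a detailed unpacking of the one-line proof in the paper, which cites the Bernstein--Zelevinsky filtration, the definition of the derivatives, and genericity. The paper leaves the identification $X_n|_{G_n}\cong \mathrm{ind}_{U_n}^{G_n}\psi_n$ implicit, whereas you have spelled it out via the $i=n{+}1$ case of the $E_n$-to-$G_n$ restriction isomorphism.
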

\begin{proof}
This follows from the Bernstein-Zelevinsky filtration (Theorem \ref{thm bz filtration}), definition of Bernstein-Zelevinsky derivatives (see Section \ref{ss ZD}) and the definition of a generic representation.
\end{proof}

Main ingredients of the proof of Theorem \ref{thm local structure} below are the multiplicity one theorem above (Theorem \ref{thm multi one}),  Definition \ref{def relative generic} and Corollary \ref{cor realize iwahoir fix vector}. 
We remark that Theorem \ref{thm local structure} is certainly not true without the condition of locally nicety.

\begin{theorem} \label{thm local structure}
Let $\pi$ be an irreducible generic representation of $GL(n+1,F)$ and let $I_n$ be the Iwahori subgroup of $GL(n,F)$. Regard $(\pi|_{GL(n,F)})^{I_n}$ as an $\mathcal H_n$-module. Let $\mathcal J$ be a maximal ideal in  $\mathcal Z_n$. 
 Let $\widehat{\mathcal Z}_n$ be the $\mathcal J$-adic completion of $\mathcal Z_n$. Set $\widehat{\mathcal H}_n =\widehat{\mathcal Z}_n \otimes_{\mathcal Z_n} \mathcal H_n$. Suppose $\pi$ is locally nice at $\mathcal J$ (see Definition \ref{def relative generic}). Then $\widehat{\mathcal Z}_n \otimes_{\mathcal Z_n} (\pi|_{GL(n,F)})^{I_n}$ is isomorphic to $\widehat{\mathcal H}_n \otimes_{\mathcal H_{S_n}} \mathrm{sgn}$ and hence is projective in the category of $\widehat{\mathcal H}_n$-modules.
\end{theorem}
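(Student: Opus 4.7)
The plan is to build a natural injection from the Whittaker model into $M:=\widehat{\mathcal Z}_n\otimes_{\mathcal Z_n}(\pi|_{GL(n,F)})^{I_n}$, prove it is surjective using the locally-nice hypothesis, and inherit projectivity from the Whittaker side. By Lemma \ref{lem existence of whittaker space}, since $\pi$ is generic there is a $GL(n,F)$-equivariant injection $\mathrm{ind}_{U_n}^{GL(n,F)}\psi_n \hookrightarrow \pi|_{GL(n,F)}$; taking $I_n$-invariants (exact) and applying Corollary \ref{cor realize iwahoir fix vector} identifies the Whittaker side with $\mathcal H_n\otimes_{\mathcal H_{S_n}}\sgn$, and the exact $\mathcal J$-adic completion then yields an injection of $\widehat{\mathcal H}_n$-modules
\[
\widehat\iota\colon \widehat P := \widehat{\mathcal H}_n\otimes_{\mathcal H_{S_n}}\sgn \ \hookrightarrow\ M.
\]
Observe that $\widehat P$ is projective over $\widehat{\mathcal H}_n$: the finite Hecke algebra $\mathcal H_{S_n}$ is semisimple at a prime-power parameter, so $\sgn$ is $\mathcal H_{S_n}$-projective, and $\mathcal H_n$ is free as a right $\mathcal H_{S_n}$-module. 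By Corollary \ref{cor fintie gen} the target $M$ is finitely generated over $\widehat{\mathcal H}_n$, so everything reduces to showing $\widehat\iota$ is surjective.

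Let $C:=M/\widehat\iota(\widehat P)$. Because $\widehat{\mathcal Z}_n$ is a complete Noetherian local ring with maximal ideal $\widehat{\mathcal J}:=\mathcal J\widehat{\mathcal Z}_n$, Nakayama's lemma reduces the vanishing of $C$ to the absence of simple $\widehat{\mathcal H}_n$-quotients. Assume for contradiction that $C\twoheadrightarrow N$ with $N$ simple; then $N$ is finite-dimensional and annihilated by $\widehat{\mathcal J}$, and under the Borel-Casselman equivalence $N\cong\pi_2^{I_n}$ for a unique irreducible smooth $GL(n,F)$-representation $\pi_2$ with $I_n$-fixed vectors and central character $\mathcal J$. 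The composite $(\pi|_{GL(n,F)})^{I_n}\to M\to N$ is a nonzero $\mathcal H_n$-map, so $\Hom_{\mathcal H_n}((\pi|_{GL(n,F)})^{I_n},\pi_2^{I_n})\ne 0$.

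To apply the locally-nice hypothesis I would invoke the Bernstein decomposition: the subrepresentation $\pi_0\subseteq\pi|_{GL(n,F)}$ generated by the Iwahori-fixed vectors is the Iwahori block, and it splits off as a direct summand. Since $\pi_2$ lies in the Iwahori block we obtain
\[
\Hom_{GL(n,F)}(\pi|_{GL(n,F)},\pi_2)=\Hom_{GL(n,F)}(\pi_0,\pi_2)=\Hom_{\mathcal H_n}((\pi|_{GL(n,F)})^{I_n},\pi_2^{I_n})\ne 0,
\]
so $\pi_2$ is a $GL(n,F)$-quotient of $\pi|_{GL(n,F)}$. Definition \ref{def relative generic} then forces $\pi_2$ to be the unique irreducible generic representation of $GL(n,F)$ annihilated by $\mathcal J$. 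By Theorem \ref{thm multi one}, $\dim\Hom_{GL(n,F)}(\pi|_{GL(n,F)},\pi_2)=1$, and through Corollary \ref{cor recover results} this Hom space is identified (via the $\sgn$-projector) with the one-dimensional nonzero Whittaker functional on $\pi_2$; under those identifications the composite $\widehat P\to M\to\pi_2^{I_n}$ corresponds to the Whittaker functional of $\pi_2$, hence is nonzero. Since $\pi_2^{I_n}$ is simple, this composite is surjective, contradicting $\pi_2^{I_n}\cong N$ being a quotient of $C$. Therefore $C=0$ and $\widehat\iota$ is the required isomorphism.

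The main obstacle I anticipate is precisely the step that promotes an $\mathcal H_n$-quotient of $(\pi|_{GL(n,F)})^{I_n}$ to a $GL(n,F)$-quotient of $\pi|_{GL(n,F)}$: this rests on the Bernstein-decomposition splitting of the Iwahori block, without which the locally-nice hypothesis, which concerns only honest $GL(n,F)$-quotients, would not constrain the Hecke-algebra quotients of the Iwahori-fixed vector space. The remaining ingredients—projectivity of $\widehat P$, Nakayama's lemma, and the Frobenius/Whittaker identifications—are either standard or directly supplied by the preceding sections of the paper.
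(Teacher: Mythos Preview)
Your overall strategy---start with the Whittaker embedding $\widehat\iota:\widehat P\hookrightarrow M$ and prove it is surjective---is an attractive inversion of the paper's route, which first builds a surjection $\widehat P\twoheadrightarrow\widehat\chi$ (via the locally-nice hypothesis and multiplicity one) and only then uses the Whittaker embedding to prove injectivity. However, your surjectivity argument has a genuine gap at the decisive step.

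You construct a nonzero map $M\to N=\pi_2^{I_n}$ that \emph{by definition factors through} $C=M/\widehat\iota(\widehat P)$, so the composite $\widehat P\to M\to N$ is zero by construction. You then assert that this same composite is nonzero because ``it corresponds to the Whittaker functional of $\pi_2$'' via Corollary~\ref{cor recover results}. But Corollary~\ref{cor recover results} only identifies $\Hom(\widehat P,\pi_2^{I_n})\cong\mathbf S(\pi_2)^*\cong(\pi_2)_{U_n,\psi}^*$; it says nothing about the restriction map $\Hom(M,\pi_2^{I_n})\to\Hom(\widehat P,\pi_2^{I_n})$. Both spaces are one-dimensional, and the content of your contradiction hypothesis is precisely that this restriction map is zero. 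So the claim you need---that the unique (up to scalar) nonzero map $M\to\pi_2^{I_n}$ does not kill $\widehat\iota(\widehat P)$---is exactly what remains to be proved, and your appeal to Corollary~\ref{cor recover results} and Theorem~\ref{thm multi one} does not supply it.

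The paper sidesteps this difficulty by running the argument in the opposite order. The locally-nice hypothesis and Nakayama give that $\widehat\chi$ is generated by its $\sgn$-isotypic part; multiplicity one then bounds the number of generators by one, yielding a surjection $\Psi:\widehat P\twoheadrightarrow\widehat\chi$. Only at the end does the Whittaker embedding enter, and there it is used together with the fact that $\widehat P$ is free of rank one over $\widehat{\mathcal A}_n$: if $a\otimes1\in\ker\Psi$ is nonzero and $1\otimes1$ maps under $\widehat\iota$ to the class of $a'\otimes1$, then $a\otimes1$ maps to the class of $aa'\otimes1\in\ker\Psi$, contradicting injectivity of $\widehat\iota$. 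This commutative-algebra step is what replaces the unjustified ``the composite corresponds to the Whittaker functional'' in your argument.
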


\begin{proof}
For simplicity, set $\chi= (\pi|_{GL(n,F)})^{I_n}$, and let 
$\widehat{\chi}=\widehat{\mathcal Z}_n \otimes_{\mathcal Z_n} \chi$. Let $\widehat{\mathcal J}=\widehat{\mathcal Z}_n \otimes_{\mathcal Z_n} \mathcal J$. First of all, by Corollary \ref{cor fintie gen}, $\widehat{\chi}$ is a finitely generated $\widehat{\mathcal H}_n$-module. We divide the proof into several steps. 
\smallskip 

{\it Step 1}: 
Let $\widehat{\chi}'$ be the $\widehat{\mathcal H}_n$-submodule of $\widehat{\chi}$ generated by $\mathbf S_n( \widehat{\chi} )$, where $\mathbf S_n$ is the sign projector. 
 
\smallskip 
\noindent
{\it Claim}: $\widehat{\chi}'=\widehat{\chi}$.  
\smallskip 

\noindent
{\it Proof of the claim:} The key idea is to use  Definition \ref{def relative generic}.  
Let $\nu=\widehat{\chi}/\widehat{\chi}'$. Consider $\nu$ as a $\widehat{\mathcal Z}_n$-module. A quotient of a finitely generated module is finitely generated and furthermore $\widehat{\mathcal H}_n$ is finitely generated as $\widehat{\mathcal Z}_n$-module. Hence by the transitivity of finitely generatedness,  $\nu$ is 
 a finitely-generated $\widehat{\mathcal Z}_n$-module. Suppose $\nu \neq 0$. This implies  $\nu/\widehat{\mathcal J}\nu\neq 0$ (Nakayama's Lemma). 
Now $\nu/\widehat{\mathcal J}\nu$ descends to an $\widehat{\mathcal H}_n/\widehat{\mathcal J}\widehat{\mathcal H}_n$-module, which is finitely generated. 
Hence $\nu/\widehat{\mathcal J}\nu$ 
 is also finite-dimensional (and non-zero). Thus there exists a (non-zero) irreducible $\widehat{\mathcal H}_n$-quotient, say $\nu'$, 
 of $\nu/\widehat{\mathcal J}\nu$. However from our construction, $\nu'$ does not contain a sign representation and hence $\nu'$ is not generic (Corollary \ref{cor recover results}). This contradicts that $\mathrm{Hom}_{{\mathcal H}_n}({\chi}, \nu')=0$ by our assumption that $\pi$ is locally nice at $\mathcal J$. 
 \smallskip 

{\it Step 2} Since $\widehat{\chi}$ is finitely generated and $\widehat{\chi}'=\widehat{\chi}$ (from the proved claim), there exists a finite set of elements $x_1, \ldots, x_{r}$ in 
$\mathbf S_n( \widehat{\chi} )$ which generates $\widehat{\chi}$.  Assume that $r$ is the smallest  possible. 
%{\it Step 3} % The elements $\left\{ x_1, \ldots, x_{r'} \right\}$ regarded as elements in $\widehat{\mathcal H}_n \otimes_{\mathcal H_n} \mathrm{sgn}$ again form a set of generators in $\widehat{\chi}$. 
 From our choices of generators $x_1, \ldots, x_r$, we have a surjective  map 
\[ \Psi: \bigoplus_{k=1}^r \widehat{\mathcal H}_n \otimes_{\mathcal H_{S_n}} \mathrm{sgn} \rightarrow \widehat{\chi}
\]
given by $(0,\ldots, 1 \otimes 1,  \ldots, 0) \mapsto x_k$, where $1 \otimes 1$ is in the $k$-th summand of $\bigoplus_{k=1}^r \widehat{\mathcal H}_n \otimes_{\mathcal H_{S_n}} \mathrm{sgn}$. 
%Now we have an injective map $\mathrm{ker} \Psi \rightarrow \bigoplus_{k=1}^r \widehat{\mathcal H}_n \otimes_{\mathcal H_{S_n}} \mathrm{sgn}$.  
Let 
\[ 
\mathscr{P}_l: \bigoplus_{k=1}^r \widehat{\mathcal H}_n \otimes_{\mathcal H_{S_n}} \mathrm{sgn} \rightarrow \widehat{\mathcal H} \otimes_{\mathcal H_{S_n}} \mathrm{sgn} 
\] 
 be the projection onto the $l$-th factor. The minimality of $r$ implies the following claim. 
 %Now consider $\mathscr{P}_l(\mathrm{ker} \Psi)$. We first claim that \\

\smallskip 
\noindent
{\it Claim}: $\mathscr{P}_l(\mathrm{ker} \Psi) \neq \widehat{\mathcal H}_n \otimes_{\mathcal H_{S_n}} \mathrm{sgn}$ for all $l$. 
%\noindent
%{\it Proof of the claim}: We only prove for $l=1$. Suppose not to obtain a contradiction. For notational convenience, set $y_k=(0,\ldots,0,1 \otimes 1,0\ldots, 0)$, where $1 \otimes 1$ is in the $k$-th summand of $\bigoplus_{k=1}^r \widehat{\mathcal H}_n$. For any element $x \in \widehat{\chi}$, $x=h_1x_1+\ldots +h_rx_r=\Psi(h_1y_1+\ldots+h_ry_r)$ for some $h_1,\ldots, h_r \in \widehat{\mathcal H}_n$. Since $\mathscr{P}_1(\mathrm{ker} \Psi) =\widehat{\mathcal H}_n \otimes_{\mathcal H_{S_n}} \mathrm{sgn}$, we have $h_1y_1+h_2'y_2+\ldots+h_r'y_r \in \mathrm{ker} \Psi$ for some $h_2', \ldots, h_r' \in \widehat{\mathcal H}_n$. Now $x=(h_2-h_2')x_2 +\ldots (h_r-h_r')x_r$. This implies that $\chi$ is generated by $x_2,\ldots, x_r$ and gives the contradiction to the minimality of choice of the generators. \\

\smallskip 
\noindent
{\it Claim}: $r \leq 1$.  

\smallskip 
\noindent
{\it Proof of the claim}:  Let 
\[ A_l:=\mathscr{P}_l\left(\mathrm{ker} \Psi + \bigoplus_{k=1}^r\widehat{\mathcal J}\left(\widehat{\mathcal H}_n \otimes_{\mathcal H_{S_n}} \mathrm{sgn} \right) \right) .\] 
If $A_l= \widehat{\mathcal H}_n\otimes_{\mathcal H_{S_n}} \mathrm{sgn}$ then $\mathscr{P}_l(\mathrm{ker} \Psi) = \widehat{\mathcal H}_n \otimes_{\mathcal H_{S_n}} \mathrm{sgn}$, 
by Nakayama's lemma, and this contradicts the previous claim. 
Thus $\widehat{\mathcal H}_n \otimes_{\mathcal H_{S_n}} \mathrm{sgn} /A_l$ is non-zero and moreover finite-dimensional. Let $\nu_l$ be an irreducible 
quotient.  By the Frobenius reciprocity, $\nu_l$ contains $\mathrm{sgn}$ and hence is the unique generic representation $\chi_{\mathrm{gen}}$ annihilated by $\mathcal J$. 
Hence this defines a map, denoted $f_l$, from $\widehat{\mathcal H}_n \otimes_{\mathcal H_{S_n}}\mathrm{sgn}$ to $\nu_l$ . Now we define a map $F_l: \bigoplus_{k=1}^r \widehat{\mathcal H}_n \otimes_{\mathcal H_{S_n}} \mathrm{sgn} \rightarrow \nu_l$ by $F_l=f_l \circ \mathscr{P}_l$. From our construction, $F_l(\mathrm{ker} \Psi)=0$ and hence descends to a map from $\chi$ to $\chi_{\mathrm{gen}}$.  
Note that $F_l$ are linearly independent, hence $\mathrm{Hom}_{\widehat{\mathcal H}_n}(\chi, \chi_{\mathrm{gen}}) \geq r$.  Theorem \ref{thm multi one}  proves the claim. 
\smallskip 

{\it Step 3:} We have shown that $\widehat{\chi}$ is isomorphic to $(\widehat{\mathcal H}_n \otimes_{\mathcal H_{S_n}}\mathrm{sgn}) /\mathrm{ker} \Psi$. It remains to prove $\mathrm{ker} \Psi = 0$. Suppose not. Let $a \otimes 1 \in \mathrm{ker} \Psi$ for some  non-zero $a \in \widehat{\mathcal Z}_n \otimes_{\mathcal Z_n} \mathcal A_n$. By Corollary \ref{cor realize iwahoir fix vector} and Lemma \ref{lem existence of whittaker space},  $\widehat{\mathcal H}_n \otimes_{\mathcal H_{S_n}} \mathrm{sgn}$ embeds into $(\widehat{\mathcal H}_n \otimes_{\mathcal H_{S_n}}\mathrm{sgn}) /\mathrm{ker} \Psi$, say the element $1 \otimes 1$ is mapped to an element 
represented by  $a' \otimes 1$ for some $a' \in \widehat{\mathcal Z}_n \otimes_{\mathcal Z_n} \mathcal A_n$. Now $a\otimes 1\neq 0$ is mapped 
to an element represented by $aa' \otimes 1$, but this one is in $\mathrm{ker} \Psi$. This is a contradiction. 
\end{proof}

Theorem \ref{thm local structure} provides a simple conceptual explanation to Conjecture \ref{conj prasad} for those locally nice representations. Our cases cover some that cannot be merely deduced from the composition factors of Bernstein-Zelevinsky filtrations and the Euler-Poincar\'e pairing. Moreover, as mentioned before, Theorem \ref{thm local structure} does not hold in general and thus a proof for a general $\mathrm{Ext}$-multiplicity result will require detailed understanding of structure or an alternate approach.

\begin{corollary} \label{cor ext branching}
Let $\pi_2$ be an irreducible generic representation of $GL(n,F)$ with Iwahori-fixed vectors annihilated by a maximal ideal $\mathcal J$ in $\mathcal Z_n$. 
Suppose $\pi_1$ is an irreducible generic representation of $GL(n+1,F)$  locally nice at  $\mathcal J$. 
  Then 
\[   \mathrm{Ext}^i_{GL(n,F)}(\pi_1, \pi_2) =0
\]
for all $i \geq 1$.
\end{corollary}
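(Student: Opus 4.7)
The plan is to reduce $\mathrm{Ext}^{i}_{GL(n,F)}(\pi_{1},\pi_{2})$ to an Ext-computation in $\widehat{\mathcal H}_{n}$-mod and apply Theorem \ref{thm local structure}, which exhibits $\widehat{(\pi_{1}|_{GL(n,F)})^{I_{n}}}$ as a projective $\widehat{\mathcal H}_{n}$-module.

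First I would apply the Bernstein decomposition. Since $\pi_{2}$ has non-zero $I_{n}$-fixed vectors, it lies in the Iwahori block of smooth representations of $G_{n}=GL(n,F)$, so
\[
\mathrm{Ext}^{i}_{G_{n}}(\pi_{1},\pi_{2}) \;\cong\; \mathrm{Ext}^{i}_{G_{n}}\bigl((\pi_{1}|_{G_{n}})^{\mathrm{Iw}},\,\pi_{2}\bigr),
\]
where $(-)^{\mathrm{Iw}}$ denotes projection to the Iwahori summand. The Borel equivalence of categories $V\mapsto V^{I_{n}}$ between the Iwahori block and $\mathcal H_{n}$-mod then transforms this into
\[
\mathrm{Ext}^{i}_{\mathcal H_{n}}\bigl((\pi_{1}|_{G_{n}})^{I_{n}},\,\pi_{2}^{I_{n}}\bigr),
\]
using that $I_{n}$-invariants on the Iwahori component coincide with those on $\pi_{1}|_{G_{n}}$.

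The next step is to complete at $\mathcal J$. Because $\pi_{2}^{I_{n}}$ is annihilated by $\mathcal J$, the natural map $\mathrm{Hom}_{\mathcal H_{n}}(A,\pi_{2}^{I_{n}}) \to \mathrm{Hom}_{\widehat{\mathcal H}_{n}}(\widehat{A},\pi_{2}^{I_{n}})$ is an isomorphism for every $\mathcal H_{n}$-module $A$. Since $\widehat{\mathcal Z}_{n}$ is flat over the Noetherian center $\mathcal Z_{n}$, completion sends a projective resolution of $(\pi_{1}|_{G_{n}})^{I_{n}}$ over $\mathcal H_{n}$ to a projective resolution of $\widehat{(\pi_{1}|_{G_{n}})^{I_{n}}}$ over $\widehat{\mathcal H}_{n}$. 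Taking cohomology gives
\[
\mathrm{Ext}^{i}_{\mathcal H_{n}}\bigl((\pi_{1}|_{G_{n}})^{I_{n}},\,\pi_{2}^{I_{n}}\bigr) \;\cong\; \mathrm{Ext}^{i}_{\widehat{\mathcal H}_{n}}\bigl(\widehat{(\pi_{1}|_{G_{n}})^{I_{n}}},\,\pi_{2}^{I_{n}}\bigr).
\]
Theorem \ref{thm local structure} identifies the first argument on the right with $\widehat{\mathcal H}_{n}\otimes_{\mathcal H_{S_{n}}}\mathrm{sgn}$, which is projective over $\widehat{\mathcal H}_{n}$ since $\mathcal H_{S_{n}}$ is semisimple at the prime-power parameter $q$ and $\widehat{\mathcal H}_{n}$ is free as a right $\mathcal H_{S_{n}}$-module. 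Hence the Ext-groups vanish for $i\geq 1$.

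All the serious input is already concentrated in Theorem \ref{thm local structure}; what remains is essentially formal. The only point that requires a little care is the Ext-compatibility of central $\mathcal J$-adic completion in the second step, which comes down to the flatness of $\widehat{\mathcal Z}_{n}$ over $\mathcal Z_{n}$ together with the observation that any module annihilated by $\mathcal J$ is automatically $\mathcal J$-adically complete.
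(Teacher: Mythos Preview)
Your proposal is correct and follows essentially the same route as the paper: reduce to the Iwahori block via the Borel equivalence, pass to the $\mathcal J$-adic completion to get an Ext computation over $\widehat{\mathcal H}_n$, and invoke Theorem \ref{thm local structure}. The paper's proof is a single displayed isomorphism with no further comment; you have simply filled in the reductions (Bernstein decomposition, flatness of completion, projectivity of $\widehat{\mathcal H}_n\otimes_{\mathcal H_{S_n}}\mathrm{sgn}$) that the paper leaves implicit.
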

\begin{proof}  Corollary follows from Theorem \ref{thm local structure} using 
\[ 
\mathrm{Ext}^i_{\mathcal H_n}((\pi_1|_{GL(n,F)})^{I_n}, \pi_2^{I_n})\cong 
 \mathrm{Ext}^i_{\widehat{\mathcal H}_n}(\widehat{(\pi_1|_{GL(n,F)})^{I_n}}, \widehat{(\pi_2)^{I_n}}). \qedhere
 \]   
\end{proof}

\subsection{Branching rule for the Steinberg representation}

This section employs similar strategy as in Section \ref{s induced rep wc} to compute the $\mathcal H_n$-structure of the Steinberg representation of $GL(n+1)$. 
We work firstly with a general split reductive group $G$. 

Let $\mathrm{St}$ be the Steinberg representation of $G$. We use the notation from Section \ref{s induced rep wc}. In particular, $B$ is the Borel subgroup of $G$, 
$\bar U$ the unipotent radical of $\bar B$, the Borel opposite to $B$, and $X_w=B w\bar U$ are the Bruhat cells. Write $X=B\bar U$ for the open cell. 
 For any subset $J$ of simple roots $\Pi$, let $P_J$ be the standard parabolic subgroup associated to $J$ (and containing $B$).
In particular, $P_{\emptyset}=B$.  Let $C^{\infty}_c(P_J \setminus G)$ be the space of compactly supported smooth $P_J$-invariant functions on $G$. 
We use the following realization of the Steinberg representation: 
\[   \mathrm{St} = C^{\infty}_c(B\setminus G) / \sum_{ \emptyset  \neq J \subset \Pi} C^{\infty}_c(P_J \setminus G) .
\]
Thus we have a $\bar B$-equivariant map $\Omega: C_c^{\infty}(B\setminus X)\rightarrow\mathrm{St}$ given as the composition of natural maps 
\begin{align} \label{eqn cont steinb}
    C_c^{\infty}(B\setminus X) \rightarrow  C^{\infty}_c(B\setminus G) \rightarrow \mathrm{St}. 
\end{align}

%We now take the Jacquet functor $U_n$:
%\[   C_c^{\chi}(X_{n+1})_{U_n} \rightarrow (\mathrm{Ind}^{GL(n+1,F)}_{B_{n+1}} \chi)_{U_n} \rightarrow (\mathrm{St}_{n+1})_{U_{n}}\]

\begin{proposition} \label{prop steinberg} 
The map $\Omega$ is a $\bar B$-equivariant isomorphism of $C_c^{\infty}(B\setminus X)$ and $\mathrm{St}$. 
\end{proposition}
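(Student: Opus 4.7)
The $\bar B$-equivariance of $\Omega$ is immediate: since $T \subseteq B$ normalizes $\bar U$, the set $X = B\bar U$ satisfies $X\bar B = BT\bar U\bar U = B\bar U = X$, so the extension-by-zero $C^\infty_c(B\setminus X) \hookrightarrow C^\infty_c(B\setminus G)$ and the subsequent quotient projection to $\mathrm{St}$ are both $\bar B$-equivariant for right translation. To prove bijectivity, I will use the open/closed short exact sequence
\[
0 \to C^\infty_c(B\setminus X) \to C^\infty_c(B\setminus G) \xrightarrow{\mathrm{res}} C^\infty_c(B\setminus (G\setminus X)) \to 0
\]
and reduce the claim to showing that the restriction $\mathrm{res}$ induces a linear bijection from $\sum_{\emptyset\neq J\subseteq\Pi} C^\infty_c(P_J\setminus G)$ onto $C^\infty_c(B\setminus (G\setminus X))$.

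The geometric input is the $(B,\bar U)$-double-coset decomposition $G = \bigsqcup_{w\in W} X_w$ with $X_w = Bw\bar U$, in which $X = X_1$ is the unique open cell and $G\setminus X = \bigsqcup_{w\neq 1} X_w$ is stratified by Bruhat length $l(w)$. For each $w\neq 1$ pick a simple root $\alpha$ with $s_\alpha w < w$; the key geometric fact I will need is that the parabolic projection $\pi_\alpha: B\setminus G \to P_\alpha\setminus G$ restricts to an isomorphism $B\setminus X_w \xrightarrow{\sim} P_\alpha\setminus P_\alpha w\bar U$ (both of dimension $\dim\bar U - l(w)$), while the preimage of this image is $B\setminus (X_w \sqcup X_{s_\alpha w})$. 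For the surjectivity of $\mathrm{res}|_{\sum_J}$, I proceed by downward induction on $l(w)$ starting from $w_0$: given a target function on $B\setminus(G\setminus X)$, match its restriction on the current top cell $X_w$ by a pullback $\pi_\alpha^*$ of a function in $C^\infty_c(P_\alpha\setminus G)$, and then absorb the unavoidable leak onto the cell $X_{s_\alpha w}$ of length $l(w)-1$ in the next inductive step. The induction terminates when $l(w)$ drops to $0$, where the residual leak lands harmlessly in the open cell $X$.

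Injectivity of $\mathrm{res}|_{\sum_J}$ is the main obstacle. The plan is a symmetric, cell-by-cell uniqueness argument: assuming $\sum_J f_J$ vanishes on $B\setminus (G\setminus X)$, use the cell isomorphism $\pi_\alpha|_{B\setminus X_w}$ together with the left $P_J$-invariance of each $f_J$ to propagate vanishing constraints through the stratification and force each $f_J = 0$. As a complementary route I will compare Jacquet modules along $\bar U$: $(C^\infty_c(B\setminus X))_{\bar U} \cong \mathbb C$ via integration over $\bar U$, and $\mathrm{St}_{\bar U} \cong \mathbb C$ (a known $T$-character of the Steinberg), and $\Omega$ induces a nonzero map between them; combined with the cyclicity of $C^\infty_c(B\setminus X)$ as a $\bar B$-module and the already-established surjectivity, this forces injectivity. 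The most technical points are rigorously establishing the cell isomorphism $\pi_\alpha|_{B\setminus X_w}$ (a combination of a dimension count and a direct Bruhat computation using $\bar U \cap w^{-1}P_\alpha w \subseteq w^{-1}Bw$) and carefully bookkeeping the overlapping $P_J$-contributions across inductive steps.
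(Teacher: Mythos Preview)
Your surjectivity argument --- downward induction on Bruhat length, correcting on each top cell $X_w$ by pulling back along $\pi_\alpha: B\backslash G \to P_\alpha\backslash G$ for a simple $\alpha$ with $s_\alpha w < w$ --- is essentially the paper's approach, phrased through the restriction map rather than the filtration $V_r$.

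The injectivity argument, however, has a genuine gap. Your first plan, to ``force each $f_J = 0$'', cannot work as stated: the individual summands are not uniquely determined (a nonzero constant on the compact space $B\backslash G$ lies in every $C^\infty_c(P_\alpha\backslash G)$, so one can have $\sum_\alpha f_\alpha = 0$ with each $f_\alpha$ nonzero). What is actually needed is only that $\sum_\alpha f_\alpha$ vanishes on the open cell $X$, but your cell-by-cell ``propagation'' sketch supplies no mechanism for this. Concretely, fixing $\bar u \in \bar U$ and setting $c_\alpha(w) = f_\alpha(w\bar u)$, the hypothesis becomes: each $c_\alpha \in \mathbb C[W]$ is left $s_\alpha$-invariant and $\sum_\alpha c_\alpha$ is supported at $1 \in W$; one must show $\sum_\alpha c_\alpha(1) = 0$. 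This is a combinatorial statement about $\mathbb C[W]$, and its proof is not a stratification argument but a single pairing: every function in $\mathbb C[W_\alpha\backslash W]$ is orthogonal to the sign character of $W$, whereas the delta function $\delta_1$ is not. This sign-character observation is the paper's key lemma for injectivity, and it is absent from your plan.

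Your alternate Jacquet-module route also does not close. Exactness of the Jacquet functor together with surjectivity of $\Omega$ does give $(\ker\Omega)_{\bar U} = 0$, but this does not force $\ker\Omega = 0$: the subspace $\{f : \int_{\bar U} f = 0\}$ is already a nonzero $\bar B$-submodule of $C^\infty_c(B\backslash X) \cong C^\infty_c(\bar U)$ with vanishing $\bar U$-coinvariants, and neither cyclicity nor the one-dimensionality of the two Jacquet modules excludes it as a candidate kernel.
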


\begin{proof}  Let $\mathbb C[W]$ denote the space of functions on $W$. Consider it a $W$-module for the action by right translations. For every 
simple root $\alpha$, let $W_{\alpha}=\{1,s_{\alpha}\}$. Then $\mathbb C[W_{\alpha}\backslash W]$  is a submodule of $\mathbb C[W]$ consisting of left 
$W_{\alpha}$-invariant functions. For injectivity we need the following lemma. 

\begin{lemma} Let $\delta\in \mathbb C[W]$ be the delta function corresponding to the identity element. Then $\delta$ cannot be written as a linear combination 
of elements in $\mathbb C[W_{\alpha}\backslash W]$ where $\alpha$ runs over all simple roots. 
\end{lemma}
\begin{proof} Functions in $\mathbb C[W_{\alpha}\backslash W]$ are perpendicular to the sign character. Hence any linear combination of such functions is also 
perpendicular to the sign character. But $\delta$ is not, hence lemma. 
\end{proof} 
We can now prove injectivity of $\Omega$. Let $f\in C_c^{\infty}(B\setminus X)$ be in the kernel of $\Omega$. 
Then there exist $f_{\alpha}\in C^{\infty}_c(P_{\alpha} \setminus G)$ such that $f=\sum_{\alpha\in\Pi} f_{\alpha}$.  For every $\bar u \in \bar U$, the function 
$w\mapsto f_{\alpha}(w\bar u)$ is in $\mathbb C[W_{\alpha}\backslash W]$. On the other hand, $w\mapsto f(w\bar u)$ is a multiple of $\delta$.   
Lemma implies that $f(\bar u)=0$. 

For surjectivity, let $V_r\subseteq  C_c^{\infty}(B \setminus G)$ be the subspace of functions supported on the union of the Bruhat cells $X_w$ for $w\in W$ such that $l(w) \leq r$. 
Let $V_w=C_c^{\infty}(B \setminus X_w)$. 
Then, if $r>1$, we have an exact sequence 
\[ 
0 \rightarrow V_{r-1} \rightarrow V_r \rightarrow \bigoplus_{l(w)=r} V_w \rightarrow 0. 
\] 
Let $v\in \mathrm{St}$  be the mage of $f\in V_r$. We need to show that $v$ is the image of some $f'\in V_{r-1}$. For every $w$ such that $l(w)=r$, pick $f_w\in V_r$ 
supported on $X_{w'}$ for $l(w) < r$ and $X_w$. Then $f-\sum_{l(w)=r} f_w \in V_{r-1}$. Since $r>1$, for every $w$ such that $l(w)=r$, there exists a simple root $\alpha$ 
such that $l(s_{\alpha} w) =r-1$.  
The group $G$ has a cell decomposition as a union of $Y_w=P_{\alpha} w \bar U$ where $w$ runs over all $w\in W$ such that $l(s_{\alpha} w) =l(w) -1$.  Note that 
$B\backslash X_w=P_{\alpha} \backslash Y_w$ for such $w$. Going back to our fixed $w$ such that $l(w)=r$, there exists a function
 $h_w \in C_c^{\infty}(P_{\alpha} \setminus G)$ such that the support of $h_w$ is on $Y_w$ and larger orbits, and $h_w=f_w$ on $B\backslash X_w=P_{\alpha} \backslash Y_w$. 
 The support of $h_w$, viewed as an element of $C_c^{\infty}(B \setminus G)$, is contained in $X_w$ and the union of $X_{w'}$ such that $l(w') < l(w)$. 
 Hence $f'=f-\sum_{l(w)=r} h_w \in V_{r-1}$ and $f'$ has the image $v$ in $\mathrm{St}$. Hence $\Omega$ is surjective.  
\end{proof}

Let $\mathrm{ch}_{I}$ be the characteristic function of $B(\bar U \cap I)$. 
Since $I=(B\cap I)(\bar U \cap I)$, it is an $I$-fixed element in $C_c^{\infty}(B \setminus G)$. Hence 
$v_0=\Omega(\mathrm{ch}_{I})$ spans the line of $I$-fixed vectors in $\mathrm{St}$. 

We now specialize to $GL(n)$.

 \begin{theorem} \label{thm global st} Let $\mathrm{St}_{n+1}$ be the Steinberg representation of $GL(n+1)$ and $v_0=\Omega(\mathrm{ch}_{I_{n+1}})$ the non-zero $I_{n+1}$-fixed vector.   The 
 $\mathcal H_n$-module $\mathrm{St}_{n+1}^{I_n}$ is generated by $v_0$ and isomorphic to $\mathcal H_n \otimes_{\mathcal H_{S_n}} \mathrm{sgn}$. In particular, it is projective. 
\end{theorem}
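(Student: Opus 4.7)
The plan is to transfer the strategy of Section \ref{s induced rep wc} (in particular the proofs of Proposition \ref{prop savin realize}, Lemma \ref{lem computations on ch}, and Corollary \ref{cor realize iwahoir fix vector}) from the Whittaker representation $\mathrm{ind}_U^G\psi$ to the Steinberg viewed as a $GL(n,F)$-module by restriction. I would check two assertions: (i) $T_w^{(n)}\cdot v_0=(-1)^{l(w)}v_0$ for all $w\in S_n$, where $T_w^{(n)}$ is the characteristic function of $I_n w I_n$; and (ii) $\mathrm{St}_{n+1}^{I_n}$ is a free $\mathcal A_n$-module of rank one with generator $v_0$. Given these, the Frobenius reciprocity discussion preceding Corollary \ref{cor realize iwahoir fix vector} produces an $\mathcal H_n$-morphism $\Phi\colon\mathcal H_n\otimes_{\mathcal H_{S_n}}\mathrm{sgn}\to \mathrm{St}_{n+1}^{I_n}$ sending $1\otimes 1$ to $v_0$, and since it matches the generators of two free rank-one $\mathcal A_n$-modules it is an isomorphism. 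Note at the outset that $v_0$ is $I_n$-fixed because $I_n=GL(n,F)\cap I_{n+1}\subset I_{n+1}$.

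For (i), the key observation I would use is that for $w\in S_n$ the natural map
\[
 I_n w I_n/I_n\longrightarrow I_{n+1}wI_{n+1}/I_{n+1},\qquad yI_n\mapsto yI_{n+1}
\]
is a bijection. Both sides have cardinality $q^{l(w)}$ since the length of $w$ is the same computed in $S_n$ or in $S_{n+1}$, and injectivity follows from $GL(n,F)\cap I_{n+1}=I_n$, because if $y,y'\in I_n w I_n$ satisfy $yI_{n+1}=y'I_{n+1}$ then $y^{-1}y'\in I_n$. A common set of representatives $y_1,\ldots,y_{q^{l(w)}}$ for both coset decompositions may therefore be chosen; using that $\pi(y_i)v_0$ depends only on $y_iI_{n+1}$ since $v_0$ is $I_{n+1}$-fixed, this yields
\[
 T_w^{(n)}\cdot v_0=\sum_i \pi(y_i)v_0=T_w^{(n+1)}\cdot v_0=(-1)^{l(w)}v_0,
\]
the last equality being the standard fact that $\mathcal H_{S_{n+1}}$ acts by the sign character on the one-dimensional space $\mathrm{St}_{n+1}^{I_{n+1}}=\mathbb Cv_0$.

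For (ii), I would invoke Theorem \ref{thm BorelMat} to identify $\mathrm{St}_{n+1}^{I_n}$ with $(\mathrm{St}_{n+1})_{\bar U_n}^{I_{T_n}}$ as $\mathcal A_n\cong\mathbb C[X_n]$-modules, and compute the Jacquet module via the $\bar B_{n+1}$-equivariant realization $\mathrm{St}_{n+1}\cong C_c^\infty(\bar U_{n+1})$ of Proposition \ref{prop steinberg}. Decomposing $\bar U_{n+1}=V_n\rtimes\bar U_n$, where $V_n\cong F^n$ is the bottom-row unipotent subgroup (which is normal in $\bar U_{n+1}$ and $T_n$-stable), integration over $\bar U_n$ gives
\[
 (\mathrm{St}_{n+1})_{\bar U_n}\cong C_c^\infty(V_n)\cong C_c^\infty(F^n),
\]
on which $T_n$ acts by coordinate-wise multiplication (up to a normalization twist that is trivial on $T_n(\mathcal O)$). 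A direct analysis for $n=1$, in which a smooth compactly supported $\mathcal O^\times$-invariant function on $F$ is encoded by its value at $0$ together with its values on the orbits $\varpi^{\mathbb Z}\mathcal O^\times$ (with eventual stabilization at the value at $0$ forced by smoothness and eventual vanishing at infinity forced by compact support), shows that $C_c^\infty(F)^{\mathcal O^\times}$ is a free $\mathbb C[\mathbb Z]$-module of rank one generated by $1_{\mathcal O}$; tensor powers give the analogue over $F^n$ with generator $1_{\mathcal O^n}$. Tracking $\mathrm{ch}_{I_{n+1}}=1_{V_n(\mathcal O)}\cdot 1_{\bar U_n(\mathcal O)}$ through the isomorphisms identifies this generator with the image of $v_0$, completing (ii).

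Combining (i) and (ii) yields the claimed isomorphism, and projectivity follows because $\mathrm{sgn}$ is a direct summand of the semisimple algebra $\mathcal H_{S_n}$, making $\mathcal H_n\otimes_{\mathcal H_{S_n}}\mathrm{sgn}$ a direct summand of $\mathcal H_n$. The step I expect to be most delicate is the Jacquet module calculation in (ii): one must carefully handle both the non-transitive $T_n$-action on $V_n\cong F^n$ (whose orbits are indexed by which coordinates vanish) and the smoothness condition at the degenerate locus, and verify that together they collapse $C_c^\infty(F^n)^{T_n(\mathcal O)}$ into a free rank-one $\mathbb C[X_n]$-module; the remainder of the argument is then essentially formal.
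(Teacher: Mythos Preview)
Your proposal is correct and follows essentially the same route as the paper: establish the sign action of $\mathcal H_{S_n}$ on $v_0$, then use Proposition \ref{prop steinberg} and the decomposition $\bar U_{n+1}=\bar U_n\bar V_n$ to identify $(\mathrm{St}_{n+1})_{\bar U_n}\cong C_c^\infty(F^n)$ and check that $v_0$ maps to $1_{\mathcal O^n}$, whose $D_n/(D_n\cap I_n)$-translates form a basis. Your write-up supplies more detail than the paper does---the coset bijection $I_nwI_n/I_n\to I_{n+1}wI_{n+1}/I_{n+1}$ for (i) and the $n=1$ analysis plus tensoring for (ii)---but the underlying argument is the same.
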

\begin{proof} Note that 
$\mathcal H_{S_n}$ acts on $v_0$ as the sign character. Let $D_n\cong (F^{\times})^n$ be the group of diagonal matrices and $B_n=D_n U_n$ be the Borel group of upper triangular matrices in $GL(n)$. 
Pick $\mathcal A_n \subseteq \mathcal H_n$, isomorphic to the group algebra of the lattice $D_n /(D_n \cap I_n)$, 
such that the Jacquet functor with respect to $\bar U_n$ corresponds to the restriction to $\mathcal A_n$. It suffices to show that 
$\mathrm{St}_{n+1}^{I_n}$ is freely generated by $v_0$ as an $\mathcal A_n$-module. This will be checked by passing to the Jacquet module with respect to $\bar U_n$. 
We have a decomposition $\bar U_{n+1} = \bar U_n \bar V_n$ where 
\[  \bar V_n = \left\{ \begin{pmatrix} I_{n \times n} & 0 \\ v & 1 \end{pmatrix}  : v=(a_1, \ldots a_n) \in M_{n \times 1}  \right\} \cong F^n .
\]
This identification and Proposition \ref{prop steinberg}, which says that $\mathrm{St_{n+1}}\cong C_c^{\infty} (\bar U_{n+1})$,  imply that there is an isomorphism of $D_n$-representations 
\[   
\Phi: (\mathrm{St_{n+1}})_{\bar U_n} \cong C_c^{\infty}(F^n). 
  \]
Furthermore,  $\Phi(v_0)$ is the characteristic function of $\mathcal O^n \subset F^n$. The theorem follows from the observation that $D_n /(D_n \cap I_n)$-translates of 
the characteristic function of $\mathcal O^n$ form a basis of $C_c^{\infty}(F^n)^{(D_n \cap I_n)}$. 
\end{proof}

%We now use the above results and apply the free $\mathcal A$-algebra argument as in Corollary \ref{cor realize iwahoir fix vector}, we have:

\begin{corollary} \label{cor st locally nice}
The Steinberg representation $\mathrm{St}_{n+1}$ of $GL(n+1,F)$ is locally nice at every central character of $\mathcal H_n$. 
\end{corollary}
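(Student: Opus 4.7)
The plan is to reduce the $\mathrm{Hom}$ computation to a question about $\mathcal H_n$-modules, and then exploit the explicit structure of $\mathrm{St}_{n+1}^{I_n}$ provided by Theorem \ref{thm global st} together with Corollary \ref{cor recover results}. Let $\pi_2$ be an irreducible representation of $GL(n,F)$ with nonzero $\pi_2^{I_n}$ annihilated by $\mathcal J$; I must show that $\mathrm{Hom}_{GL(n,F)}(\mathrm{St}_{n+1}, \pi_2) \neq 0$ forces $\pi_2$ to be the unique irreducible generic representation of $GL(n,F)$ annihilated by $\mathcal J$. Since $\pi_2$ lies in the Iwahori Bernstein block, the Bernstein decomposition of $\mathrm{St}_{n+1}|_{GL(n,F)}$ shows that any $GL(n,F)$-equivariant map to $\pi_2$ factors through the Iwahori-block summand, which is automatically generated by its $I_n$-fixed vectors. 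The Borel-Casselman equivalence then yields
\[
\mathrm{Hom}_{GL(n,F)}(\mathrm{St}_{n+1}, \pi_2) \cong \mathrm{Hom}_{\mathcal H_n}\bigl((\mathrm{St}_{n+1}|_{GL(n,F)})^{I_n}, \pi_2^{I_n}\bigr).
\]

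Now Theorem \ref{thm global st} identifies $(\mathrm{St}_{n+1}|_{GL(n,F)})^{I_n}$ with $\mathcal H_n \otimes_{\mathcal H_{S_n}} \mathrm{sgn}$, so Frobenius reciprocity gives
\[
\mathrm{Hom}_{\mathcal H_n}\bigl(\mathcal H_n \otimes_{\mathcal H_{S_n}} \mathrm{sgn}, \pi_2^{I_n}\bigr) \cong \mathrm{Hom}_{\mathcal H_{S_n}}(\mathrm{sgn}, \pi_2^{I_n}) = \mathbf S_n(\pi_2^{I_n}).
\]
Because the irreducible $\pi_2$ is generated by its Iwahori-fixed vectors, Corollary \ref{cor recover results} identifies $\mathbf S_n(\pi_2) = \mathbf S_n(\pi_2^{I_n})$ with the $\psi_n$-twisted Jacquet module $(\pi_2)_{U_n, \psi_n}$. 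Hence nonvanishing of $\mathrm{Hom}_{GL(n,F)}(\mathrm{St}_{n+1}, \pi_2)$ is equivalent to $\pi_2$ being generic. Combined with the uniqueness of the Whittaker model, which implies that at most one irreducible representation of $GL(n,F)$ with a given $\mathcal H_n$-central character is generic, this forces $\pi_2$ to be the unique such representation, verifying Definition \ref{def relative generic}.

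The main subtle point is the first step, since $\mathrm{St}_{n+1}|_{GL(n,F)}$ is not obviously generated by its $I_n$-fixed vectors, so the Borel-Casselman equivalence does not apply directly to the whole restriction. The Bernstein decomposition, together with the fact that $\pi_2$ is concentrated in the Iwahori block, reduces the problem to the Iwahori-block summand of $\mathrm{St}_{n+1}|_{GL(n,F)}$, on which the equivalence does apply. The remaining ingredients (the structural description of $\mathrm{St}_{n+1}^{I_n}$ in Theorem \ref{thm global st}, the genericity criterion of Corollary \ref{cor recover results}, and the uniqueness of the generic irreducible constituent at a given central character) are readily available.
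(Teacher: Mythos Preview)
Your argument is correct and is exactly the derivation the paper intends: the corollary is stated without proof immediately after Theorem \ref{thm global st}, and the implicit reasoning is precisely the chain Theorem \ref{thm global st} $\Rightarrow$ Frobenius reciprocity $\Rightarrow$ Corollary \ref{cor recover results} that you have written out. Your handling of the one genuine subtlety---projecting $\mathrm{St}_{n+1}|_{GL(n,F)}$ to its Iwahori Bernstein component before invoking the Borel--Casselman equivalence---is also correct.
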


Note that Theorem \ref{thm local structure} for $\mathrm{St}_{n+1}$ can be recovered directly from Theorem \ref{thm global st}.

 \end{document}